\documentclass[twoside,11pt]{article}

\usepackage[english]{babel}
\usepackage{amsmath}
\usepackage{amsthm}
\usepackage{amsfonts}
\usepackage{amssymb}
\usepackage{graphicx}
\usepackage{colortbl,dcolumn}
\usepackage{paralist}  

\allowdisplaybreaks[4]
       \newtheorem{lemma}{\bf Lemma}[section]
       \newtheorem{theorem}{\bf Theorem}[section]
       \newtheorem{proposition}{\bf Proposition}[section]

       \newtheorem{remark}{\bf Remark}[section]

       \numberwithin{equation}{section}

\newcommand{\R}{\mathbb{R}}

\newcommand{\be}{\begin{equation}}
\newcommand{\ee}{\end{equation}}

\begin{document}
\title{{\Large Asymptotic stability of strong rarefaction waves to a parabolic-hyperbolic system arising from chemotaxis}
\footnotetext{\small *Corresponding author.}
\footnotetext{\small E-mail addresses: liust533@nenu.edu.cn (S. Liu), lijy645@nenu.edu.cn (J. Li)}
}
\author{{Sitong Liu, Jingyu Li$^{*}$}\\[2mm]
\small\it School of Mathematics and Statistics, Northeast Normal University,\\
\small\it   Changchun 130024, PR China }
\date{ }
\maketitle

\begin{quote}
\small \textbf{Abstract:}
We are interested in the asymptotic behavior of solutions toward strong rarefaction waves for a parabolic-hyperbolic system arising from chemotaxis. Suppose that the Riemann problem to the corresponding inviscid system admits rarefaction waves. We show that if the initial data is a small perturbation of an approximate  rarefaction wave, then the Cauchy problem has a unique global solution that converges to the rarefaction wave asymptotically in time.
The waves can be either a single  rarefaction wave or a superposition of two  rarefaction waves. Furthermore, the stability results hold regardless of the wave strengths. The proofs are based on the energy method, where the key observations are the monotonicity of the approximate rarefaction waves with respect to both space and time, and the appropriate replacement of spatial derivative of the hyperbolic component with the parabolic component in proper forms.

\indent \textbf{Key words}: Chemotaxis; parabolic-hyperbolic system; strong rarefaction wave; composite wave; asymptotic stability.

\indent \textbf{AMS(2020) Subject Classification}: 35B35, 35B40, 35Q92, 92C17

\end{quote}

\section{Introduction}\label{Sect.1}
In this paper, we are concerned with the large time behaviors of solutions to the Cauchy problem of the following parabolic-hyperbolic system:
\begin{equation}\label{1.1}
\begin{cases}
n_{t} -(nq)_{x}=n_{xx}, &\ x\in\R,t>0,\\
q_{t}-n_{x}=0, &\ x\in\R,t>0,
\end{cases}
\end{equation}\\
with the initial data
\begin{equation}\label{i d}
(n,q)(x,0)=(n_0,q_0)(x)\rightarrow (n_\pm ,q_\pm ) \ \text{ as } x\rightarrow \pm\infty.
\end{equation}
The system \eqref{1.1} is derived from the following PDE-ODE hybrid chemotaxis model with logarithmic sensitivity
\begin{equation}\label{chemotaxis model}
\left\{
\begin{aligned}
&n_t=D n_{xx}-\chi(n(\ln c)_x)_x,\\
&c_t=- nc+\beta c,
\end{aligned}
\right.
\end{equation}
which was first proposed by Levine et al. \cite{H.A. Levine 00} to model the interactions between vascular endothelial cells and the signaling molecule vascular endothelial growth factor (VEGF) during the initiation of tumor angiogenesis. In this system, the unknowns $n(x,t)>0$ and $c(x, t)>0$ denote the density of vascular endothelial cells and the concentration of VEGF, respectively. In contrast to random diffusion, chemotaxis represents a biased movement of organisms in response to chemical stimuli. The parameter $D>0$ denotes the diffusion coefficient of vascular endothelial cells. The parameter $\chi>0$ stands for the chemotactic coefficient measuring the strength of chemotaxis, and the parameter $\beta\geq0$ denotes the growth rate of chemical VEGF. The logarithmic sensitivity function $\ln c$ of the model \eqref{chemotaxis model} reflects that the chemotactic response of cells to the chemical signal adheres to the Weber-Fechner law, a principle with prominent  implications in biological modelings (cf. \cite{W. Alt 87, F.W. Dahlquist 72, E.F. Keller 71}). Since the sensitivity function $\ln c$ is singular at $c=0$, it is challenging to study the model \eqref{chemotaxis model} directly. To resolve this singularity, Levine and Sleeman \cite{H.A. Levine 97} introduced an effective Hopf-Cole transformation
\begin{equation*}
\begin{aligned}
q:=-(\ln c)_x=-\frac{c_x}{c},
\end{aligned}
\end{equation*}
which transform the chemotaxis model \eqref{chemotaxis model} to the following parabolic-hyperbolic system
\begin{equation*}
\left\{
\begin{aligned}
&n_t-\chi(nq)_x=D n_{xx},\\
&q_t-n_x=0.
\end{aligned}
\right.
\end{equation*}
Then applying the scaling transformations
\begin{equation*}
\begin{aligned}
\tilde{x}=\frac{x}{\sqrt{\chi}},\quad \tilde{q}=\sqrt{\chi}q,\quad
\tilde{D}=\frac{D}{\chi},
\end{aligned}
\end{equation*}
and taking $\tilde{D}=1$, one can obtain \eqref{1.1} by dropping the tildes.

The first analytical work on the well-posedness of the system \eqref{1.1}-\eqref{i d} was established by Guo et al. \cite{GXZZ}, where it was shown that the system admits a unique global strong solution for large initial data. Subsequently, by taking a positive constant state as the background solution, Zhang et al. \cite{Y. Zhang 13} constructed global classical small solutions and obtained an algebraic convergence rate. By establishing an entropy type estimate relative to a constant state, Li et al. \cite{D.Li 15} further proved the global well-posedness of large classical solutions. In the multi-dimensional case, owing to the complexity of higher dimensionality, all existing results on global well-posedness have been established for small solutions around a constant state. See \cite{D.Li 15,DL,Hao} for results concerning global well-posedness in various working spaces.

All the above works are concerned with the stability of constant steady states.  Once $n_+\neq n_-$, it  turns out that the large time behaviors of solutions to \eqref{1.1} are intrinsically related to the Riemann problem for inviscid system:
\begin{equation}\label{conservation law}
\left\{
\begin{aligned}
&n_{t} -(nq)_{x}=0,\\
&q_{t}-n_{x}=0,
\end{aligned}
\right.
\end{equation}
with initial data
\begin{equation}\label{n0,q0}
(n,q)(x,0)=(n_0^r,q_0^r)(x)=
\left\{
\begin{aligned}
&(n_-,q_-),\ \ \ \ x<0,\\
&(n_+,q_+),\ \ \ \ x>0.
\end{aligned}
\right.
\end{equation}
To analyze its mathematical structure, we rewrite \eqref{conservation law} as
\begin{equation}\label{equivalent system}
\left(\begin{aligned}
&n\\&q
\end{aligned}
\right)_t
+\left(\begin{aligned}
&-q\ \ -n\\&-1\ \ \ \ \ 0
\end{aligned}\right)
\left(\begin{aligned}
&n\\&q
\end{aligned}
\right)_x=0.
\end{equation}
A direct calculation shows that the matrix
\begin{equation*}
J=\left(\begin{aligned}
&-q\ \ -n\\&-1\ \ \ \ \ 0
\end{aligned}\right)
\end{equation*}
has two distinct eigenvalues
\begin{equation}\label{1.8}
\lambda_1(n,q)=\frac{-q-\sqrt{q^2+4n}}{2}
\ <\ \frac{-q+\sqrt{q^2+4n}}{2}=\lambda_2(n,q),
\end{equation}
if $q^2+4n>0$, and the corresponding right eigenvectors are given by
\begin{equation*}
r_k(n,q)=(-\lambda_k(n,q), 1)^T,\
k=1,2.
\end{equation*}
It is easy to verify that
\begin{equation}\label{genuinely nonlinear}
\begin{aligned}
&\nabla \lambda_1(n,q)\cdot r_1(n,q)=
-\frac{q}{\sqrt{q^{2}+4n}}-1\neq 0,\\
&\nabla \lambda_2(n,q)\cdot r_2(n,q)=
\frac{q}{\sqrt{q^{2}+4n}}-1\neq 0,
\end{aligned}
\end{equation}
where $\nabla \lambda_k(n,q)\triangleq(\frac{\partial\lambda_k}{\partial n}, \frac{\partial\lambda_k}{\partial q})^T ,k=1,2$.
Thus the system \eqref{conservation law} is strictly hyperbolic and the characteristic fields are genuinely nonlinear if  $q^2+4n>0$.

According to the theory of hyperbolic conservation laws \cite{J. A. 83}, the solution to the Riemann problem  \eqref{conservation law}-\eqref{n0,q0} consists of shock waves, rarefaction waves, and their linear superpositions.  Wang and Hillen \cite{Wang08} were the first to construct shock waves for the inviscid system \eqref{conservation law} and explicitly derived the traveling waves for the parabolic-hyperbolic system \eqref{1.1}. Subsequently, using the energy method, Li and Wang \cite{T.Li 09} proved the asymptotic stability of traveling waves for \eqref{1.1} away from the vacuum state under zero-mass perturbations. Li et al. \cite{J.Li 13} generalized the work of \cite{T.Li 09} to composite two traveling waves. On the basis of weighted energy estimates, Jin et al.  \cite{Jin 13} further derived the asymptotic stability of traveling waves with vacuum end state under zero mass perturbations. The first multidimensional study was conducted by Chae et al. \cite{Chae}, who obtained the stability of planar traveling waves on an infinite strip domain under zero-mass perturbations. All these works were restricted to zero-mass perturbations as their methods essentially rely on the anti-derivative approach. Very recently, a breakthrough was achieved by Choi et al. \cite{Choi,Choi 20} where the so-called $L^2$ contraction property was successfully derived for general perturbations of traveling waves via the relative entropy method. Based on such relative entropy method, along with some intrinsic cancellation mechanisms of the chemotaxis model, the authors of the current paper \cite{Liu} proved the nonlinear stability of planar  traveling waves in the three-dimensional case under general perturbations.

Compared with the fruitful works available for traveling waves of \eqref{1.1}, the understanding of rarefaction waves is quite limited. To our knowledge, Rascle \cite{Rascle} was the first to construct rarefaction waves for the inviscid system \eqref{conservation law}, where a strictly hyperbolic region and a linearly degenerate region were found. Very recently, Li and Mathur \cite{T.Li 22} as well as He and Wang \cite{F.He 24} presented a comprehensive study of the Riemann problem for \eqref{conservation law} and constructed rarefaction waves, shock waves, contact discontinuities, and their superpositions. However, it remains an open question whether these basic waves are stable under appropriate perturbations. Furthermore, for the parabolic-hyperbolic system \eqref{1.1}, it is unknown whether its large-time behaviors could include a single rarefaction wave and its superposition with rarefaction waves or shock waves from other fields.

The purpose of this paper is to prove the asymptotic stability of rarefaction waves and the superposition of two rarefaction waves. We note that as early as the 1980s, Xin \cite{Z.P Xin 88,Z.P Xin 89} presented a general stability theory of weak rarefaction waves and the superposition of two weak rarefaction waves for  $2\times2$ viscous conservation laws. In contrast to the works in \cite{Z.P Xin 88,Z.P Xin 89}, we are interested in the strong rarefaction waves.

We now state our main results. Define the rarefaction curve $R_k$ in a suitable neighborhood of $(n_-,q_-)$ as
\begin{equation}\label{Rk}
\begin{aligned}
&R_k(n_-,q_-)=\{(n,q)\in \mathbb{R}^2;\
h_k(n,q)=h_k(n_-,q_-);\
\lambda_k(n,q)\geq\lambda_k(n_-,q_-)\},
\end{aligned}
\end{equation}
where $h_k$ is a $k$-Riemann invariant. Following \cite{F.He 24} or \cite{T.Li 22}, we derive our Riemann invariants as follows:
\begin{equation}\label{RI}
\begin{aligned}
&h_1(n,q)=(\sqrt{q^2+4n}+q)(\sqrt{q^2+4n}-2q)^2,\\
&h_2(n,q)=(\sqrt{q^2+4n}-q)(\sqrt{q^2+4n}+2q)^2.
\end{aligned}
\end{equation} It is easy to verify that the Riemann invariants satisfy
\begin{equation}\label{RId}
\begin{aligned}
& \nabla h_k\cdot r_k=0,\
k=1,2.
\end{aligned}
\end{equation}

Our first result is about the asymptotic stability of single rarefaction waves.
\begin{theorem}\label{the1.1}
Let $k=1, 2$. For each fixed $(n_-, q_-)$ with $n_->0$, there exists a positive constant $\delta_0$, such that
if $(n_+,q_+)\in R_k(n_-,q_-)$ $(k$-rarefaction curve$)$, $n_+>0$ and
\begin{equation}\label{n0-n0r}
\|n_{0}-N_{k0}, q_{0}-Q_{k0}\|_{H^1}
\leq\delta_0,
\end{equation}
then the initial value problem \eqref{1.1}-\eqref{i d} has a unique global solution $(n,q)(x,t)$ satisfying
\begin{equation}\label{solutio}
(n-n^r_k, q-q^r_k)\in C^0([0, +\infty); L^2),\ (n, q)_x\in C^0([0, +\infty); L^2),\ q_{xx}\in L^2((0, +\infty); L^2),
\end{equation}
and
\begin{equation}\label{approximate}
\lim\limits_{t\rightarrow \infty}
\sup\limits_{x\in \mathbb{R}}
\left|(n,q)(x,t)-(n^r_k,q^r_k)(x,t)\right|=0,
\end{equation}
where $(N_{k0},Q_{k0})(x)$ is the initial value of smooth approximate rarefaction wave $(N_k,Q_k)(x,t)$ constructed in \eqref{U}.
\end{theorem}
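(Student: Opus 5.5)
We construct a smooth approximate rarefaction wave in the standard Xin/Matsumura--Nishihara way. Let $w(x,t)$ solve the Burgers problem $w_t+ww_x=0$ with smooth monotone data $w(x,0)=\frac{w_-+w_+}{2}+\frac{w_+-w_-}{2}\tanh x$, where $w_\pm=\lambda_k(n_\pm,q_\pm)$. Define $(N_k,Q_k)$ by $\lambda_k(N_k,Q_k)=w$ and $h_k(N_k,Q_k)=h_k(n_-,q_-)$. By \eqref{RId}, $(N_k,Q_k)$ solves the inviscid system \eqref{conservation law} exactly. Hence the error in \eqref{1.1} is only the viscous term $N_{kxx}$. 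The facts we need are:
\begin{itemize}
\item $w_x>0$, and $\|w_x\|_{L^p}\le C(1+t)^{-1+1/p}$, $\|w_{xx}\|_{L^p}\le C(1+t)^{-1}$;
\item $\sup_x|(N_k,Q_k)-(n_k^r,q_k^r)|\to0$.
\end{itemize}
We also need the sign structure along $R_k$. From $r_k=(-\lambda_k,1)^T$ we get $N_{kx}=-\lambda_k Q_{kx}$ and $Q_{kx}=w_x/(\nabla\lambda_k\cdot r_k)$. Genuine nonlinearity \eqref{genuinely nonlinear} then gives $Q_{kx}<0$, and $N_{kx}$ has a definite sign with $|N_{kx}|\sim|Q_{kx}|\sim w_x$. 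The positivity $n_\pm>0$ with $N_k\ge\min(n_-,n_+)>0$ keeps us away from vacuum. All constants depend only on $(n_\pm,q_\pm)$, with no smallness of $|w_+-w_-|$. This is where "strong" waves are allowed.

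Set $\phi=n-N_k$ and $\psi=q-Q_k$. The perturbation system is
$$\phi_t-(\phi Q_k+N_k\psi+\phi\psi)_x=\phi_{xx}+N_{kxx},\qquad \psi_t-\phi_x=0.$$
Local existence in the space \eqref{solutio} is standard. It is then enough to prove an a priori estimate
$$\sup_t\|(\phi,\psi)\|_{H^1}^2+\int_0^\infty\Big(\|\phi_x\|_{H^1}^2+\|\psi_x\|^2+\int w_x(\phi^2+\psi^2)\Big)dt\le C\big(\|(\phi_0,\psi_0)\|_{H^1}^2+1\big)$$
under the assumption $\sup_t\|(\phi,\psi)\|_{H^1}\le\varepsilon$. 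Actually, since $\int_0^\infty\|N_{kxx}\|\,dt$ diverges only logarithmically, we want the error contribution to be small. We handle it by the usual trick of choosing the initial slope parameter so that $\int_0^\infty\|w_{xx}\|_{L^2}^{?}$-type terms are controlled via $\|N_{kxx}\|_{L^1}\le C(1+t)^{-1}$ paired with $\|\phi\|_{L^\infty}\le C\|\phi\|^{1/2}\|\phi_x\|^{1/2}$. This gives integrable powers after Young's inequality, giving $(1+t)^{-4/3}$-type decay.

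\textbf{Zeroth-order estimate (main obstacle).} We multiply the equations by a relative-entropy weight, i.e. $\phi/N_k$-type and $\psi$-type multipliers, or a quadratic form $\frac{\phi^2}{2N_k}+\frac{\psi^2}{2}$ suitably modified. The goal is that the terms involving $N_{kx},Q_{kx}$ form a nonnegative quadratic form $\int w_x\,\mathcal{Q}(\phi,\psi)$. For weak waves this is automatic up to small errors. For strong waves it must come from exact algebra. We expect the good term to arise from the monotonicity of $N_k,Q_k$ in both $x$ and $t$: since $N_{kt}=-\lambda_kN_{kx}$ and $Q_{kt}=-\lambda_kQ_{kx}$, the time derivative of the weight $1/N_k$ contributes with a definite sign. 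The problematic cross term $\int Q_{kx}\phi\psi$ is handled by the replacement idea from the abstract. From the second equation, $\phi=\partial_x^{-1}\psi_t$ is not available without antiderivatives, so instead we rewrite $\psi_x$-containing terms using the parabolic equation, e.g. $\psi_x=(\phi_t-\phi_{xx}-\ldots)/N_k$. This converts bad products into time derivatives plus terms absorbed by $\|\phi_x\|^2$. First we would try the weight $\frac{\phi^2}{N_k}$ combined with $\psi^2$. Then we check whether the residual coefficient matrix is positive definite using $N_{kx}=-\lambda_kQ_{kx}$ and $\lambda_k^2+q\lambda_k=n$. Failing that, we would add a multiple of $\int\phi\psi$ to the energy.

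\textbf{First-order estimates and conclusion.}
\begin{itemize}
\item Differentiating the $\phi$-equation and testing with $\phi_x$ gives control of $\|\phi_x\|^2$ and $\int\|\phi_{xx}\|^2$. Nonlinear terms are bounded by $\varepsilon(\|\phi_x\|_{H^1}^2+\|\psi_x\|^2)$ and the decaying wave terms.
\item For $\psi_x$ dissipation, we test $\psi_{xt}-\phi_{xx}=0$ against $\psi_x$ and combine it with the $\phi$-equation tested by $-\psi_x$. The term $N_k\psi_{xx}$ contains $N_k\psi_x^2$ with the good sign, because $\phi_t\psi_x$ becomes $(\phi\psi_x)_t+\phi_x^2$-type terms via $\psi_{xt}=\phi_{xx}$.
\end{itemize}
Summing with small weights closes the estimate, and a continuity argument gives global existence. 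Finally, $\int_0^\infty(\|\phi_x\|^2+\|\psi_x\|^2+|\tfrac{d}{dt}\|\phi_x,\psi_x\|^2|)dt<\infty$ yields $\|(\phi_x,\psi_x)\|\to0$. Then $\|(\phi,\psi)\|_{L^\infty}\le\sqrt2\|(\phi,\psi)\|^{1/2}\|(\phi_x,\psi_x)\|^{1/2}\to0$, which combined with the convergence of $(N_k,Q_k)$ to $(n^r_k,q^r_k)$ gives \eqref{approximate}.
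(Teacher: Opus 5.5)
There is a genuine gap, and it is exactly at the step you call the main obstacle. You never produce an $L^2$ identity whose wave-interaction terms have a sign for waves of arbitrary strength, and your first candidate provably fails. Multiply the $\phi$-equation by $\phi/N_k$ and the $\psi$-equation by $\psi$. The terms $\phi\psi_x+\phi_x\psi$ integrate away, but the coefficient of $\phi^2$ is $\frac{1}{2N_k^2}\bigl(N_{kt}-(N_kQ_k)_x\bigr)$, which vanishes identically because $(N_k,Q_k)$ solves the inviscid system. This is just the relative-entropy identity for the convex entropy $n\ln n+\frac{q^2}{2}$, whose relative flux is $(-\phi\psi,0)$. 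In particular, $\partial_t(1/N_k)$ enters with the \emph{bad} sign, contrary to your expectation, and is merely cancelled by the flux terms.

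What survives is the purely off-diagonal term $\int\frac{N_{kx}}{N_k}\phi\psi\,dx$. It is indefinite, has no diagonal part to absorb it, and cannot be treated as a perturbation, since $\|N_{kx}\|_{L^1}=|n_+-n_-|$ neither decays nor is small. Your fallback of adding $c\int\phi\psi\,dx$ brings in, among other terms, $\int Q_k\phi\psi_x\,dx$, for which no unweighted dissipation of $\phi$ is available. The replacement idea is also misplaced in your plan: the linear cross term contains no $\psi_x$, so rewriting $\psi_x$ through the parabolic equation cannot remove it.

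The missing idea is the paper's weight: multiply the $\phi$-equation by $\phi$ and the $\psi$-equation by $N_k\psi$. For multipliers $a\phi$ and $aN_k\psi$ the off-diagonal remainder is $\int a_xN_k\phi\psi\,dx$, which disappears exactly when $a$ is constant. With $a\equiv1$, the term $\int N_k\phi_x\psi\,dx$ cancels exactly. We have $Q_{kx}<0$, and the relations $N_{kx}=-\lambda_kQ_{kx}$, $N_{kt}=-\lambda_kN_{kx}$ that you already noted give $N_{kt}=\lambda_k^2Q_{kx}<0$. Hence
\[
\frac{d}{dt}\int\Bigl(\frac{\phi^2}{2}+\frac{N_k\psi^2}{2}\Bigr)dx+\frac12\int|Q_{kx}|\phi^2dx+\frac12\int|N_{kt}|\psi^2dx+\int\phi_x^2dx=\int\phi N_{kxx}dx+\frac12\int\phi^2\psi_x\,dx .
\]
So the monotonicity of the approximate wave in both $x$ and $t$ gives both good terms, with no restriction on the amplitude. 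Since $\mathrm{diag}(1,N_k)$ symmetrizes the system but is not an entropy Hessian, a cubic term $\frac12\int\phi^2\psi_x\,dx$ is left over. \emph{This} is where the paper substitutes $\psi_x=(N_k+\phi)^{-1}[\phi_t-(Q_k+\psi)\phi_x-N_{kx}\psi-\phi Q_{kx}-\phi_{xx}-N_{kxx}]$. That turns it into the time derivative of $\int\frac{\phi^3}{6(N_k+\phi)}dx$ plus terms bounded by $E(t)\int_0^t\!\int(|Q_{kx}|\phi^2+\phi_x^2+|\phi N_{kxx}|)$.

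Your $H^1$ steps and the convergence argument do match the paper. Treating $N_{kxx}$ with $\tanh$ data via $\|\phi\|_{L^\infty}\|N_{kxx}\|_{L^1}$ and Young's inequality is a legitimate alternative to the paper's algebraic-tail data and Gronwall-type Lemma~\ref{Gronwall}. However, it works only once a slope parameter is actually built in: data $\tanh(\varepsilon x)$ give $\int_0^\infty\|N_{kxx}\|_{L^1}^{4/3}dt\le C\varepsilon^{1/3}$. A bound with ``$+1$'' on the right cannot close a continuity argument that assumes $\|(\phi,\psi)\|_{H^1}$ is small.
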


We proceed to investigate the stability of the composite wave of two rarefaction waves.
According to the theory of hyperbolic conservation laws \cite{J. A. 83} (see the works of \cite{F.He 24,T.Li 22} for details), for each fixed state $(n_-,q_-)$ with $n_->0$, there exists a region $RR(n_-,q_-)$  such that for any state $(n_+,q_+)\in RR(n_-,q_-)$, the Riemann problem \eqref{conservation law}-\eqref{n0,q0} has a unique solution denoted by $(n^r,q^r)(x,t)$ which
can be constructed as follows.

One can find a unique state $(\bar{n},\bar{q})$ on the $1$-rarefaction wave curve $R_1(n_-,q_-)$, i.e., $(\bar{n},\bar{q})\in R_1(n_-,q_-)$, such that $(n_+,q_+)$ is on the
$2$-rarefaction wave curve $R_2(\bar{n},\bar{q})$.
Let $(n_1^r, q_1^r)(x,t)$ denote the $1$-rarefaction wave connecting $(n_-,q_-)$ to $(\bar{n},\bar{q})$ and $(n_2^r, q_2^r)(x,t)$ denote the $2$-rarefaction wave connecting $(\bar{n},\bar{q})$ to $(n_+,q_+)$. Then the composite rarefaction wave $(n^r, q^r)$ $(x,t)$ is a linear superposition of $(n_1^r, q_1^r)(x,t)$ and $(n_2^r, q_2^r)(x,t)$:
\begin{equation}\label{n1+n2}
(n^r, q^r)(x,t):=(n_1^r, q_1^r)(x,t)+(n_2^r, q_2^r)(x,t)-(\bar{n},\bar{q}).
\end{equation}

Our second result is to show that when the initial data $(n_0,q_0)(x)$ and $(n_0^r,q_0^r)(x)$ are suitably close, the solution of the system \eqref{1.1}-\eqref{i d} will tend to the composite rarefaction wave $(n^r,q^r)(x,t)$ as $t\rightarrow +\infty$.

\begin{theorem}\label{the1.2}
For each fixed $(n_-, q_-)$ with $n_->0$, assume that $(n_+,q_+)\in RR(n_-,q_-)$ and $n_+>0$. Then the composite rarefaction wave $(n^r,q^r)(x,t)$ constructed in \eqref{n1+n2} is
nonlinearly stable in the sense that there exists a constant $\delta_1>0$ such that if
\begin{equation}
\|n_0-N_0, q_0-Q_0\|_{H^1}\leq\delta_1,
\end{equation}
then the initial value problem \eqref{1.1}-\eqref{i d} has a unique global solution $(n,q)(x,t)$ satisfying
\begin{equation}\label{1.9}
(n-n^r, q-q^r)\in C^0([0, +\infty); L^2),\
(n, q)_x\in C^0([0, +\infty); L^2),\
q_{xx}\in L^2((0, +\infty); L^2),
\end{equation}
and
\begin{equation}\label{stable}
\lim\limits_{t\rightarrow \infty}
\sup\limits_{x\in \mathbb{R}}
\left|(n,q)(x,t)-(n^r,q^r)(x,t)\right|=0,
\end{equation}
where $(N_0,Q_0)(x)$ is the initial value of smooth approximate rarefaction wave $(N,Q)(x,t)$ constructed in \eqref{N,Q}.
\end{theorem}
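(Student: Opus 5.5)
The plan follows the standard energy method for rarefaction waves, adapted so that no smallness of the wave strength is needed.

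\textbf{Step 1: construction of the approximate wave.} First I construct the smooth approximate composite wave $(N,Q)=(N_1,Q_1)+(N_2,Q_2)-(\bar n,\bar q)$ as in \eqref{N,Q}. Each $(N_k,Q_k)$ is obtained from the Burgers solution $w_k(x,t)$ with smoothed (for instance $\tanh$-type) initial data connecting $\lambda_k(n_-,q_-)$ (respectively $\lambda_k(\bar n,\bar q)$) to $\lambda_k(\bar n,\bar q)$ (respectively $\lambda_k(n_+,q_+)$). I then define $(N_k,Q_k)$ through $\lambda_k(N_k,Q_k)=w_k$ and $h_k(N_k,Q_k)=$const.

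I take the standard facts for granted:
\begin{itemize}
\item $\partial_x w_k>0$;
\item $\|\partial_x w_k\|_{L^p}\le C(1+t)^{-1+1/p}$;
\item $\|\partial_x^2 w_k\|_{L^p}\le C(1+t)^{-1}$;
\item $\sup_x|(N,Q)-(n^r,q^r)|\to0$.
\end{itemize}
From the sign of \eqref{genuinely nonlinear} along $R_k$ together with $\nabla h_k\cdot r_k=0$, I derive the monotonicity of $N_k,Q_k$ in $x$ (and hence in $t$) with definite signs. I also check that $q^2+4n>0$ and $n>0$ hold along the whole curve.

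For the composite wave I need the wave-interaction estimate. The products $|\partial_x(N_1,Q_1)|\,|(N_2,Q_2)-(\bar n,\bar q)|$ and similar terms are bounded by $Ce^{-c(|x|+t)}$, because $\lambda_1(\bar n,\bar q)<\lambda_2(\bar n,\bar q)$ separates the two fans.

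\textbf{Step 2: perturbation system and local existence.} I set $\phi=n-N$ and $\psi=q-Q$. The perturbation satisfies
$$\phi_t-(\phi Q+N\psi+\phi\psi)_x-\phi_{xx}=F_1,\qquad \psi_t-\phi_x=F_2.$$
Here $F_1=N_{xx}+$(interaction terms) and $F_2=$(interaction terms). Local existence in $H^1$ is standard. By continuation it suffices to prove the a priori estimate
$$\sup_t\|(\phi,\psi)\|_{H^1}^2+\int_0^\infty\big(\|\phi_x\|_{H^1}^2+\|\psi_x\|^2+\|\sqrt{|N_x|}(\phi,\psi)\|^2\big)\,dt\le C\|(\phi_0,\psi_0)\|_{H^1}^2+C$$
under the assumption $\sup_t\|(\phi,\psi)\|_{H^1}\le\varepsilon$. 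The constant on the right, coming from $\int\|N_{xx}\|\,\|\phi\|\,dt$ and the interaction terms, has to be handled with decay rates, for example through $\|N_{xx}\|_{L^1}$-type bounds and Gronwall.

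\textbf{Step 3: basic $L^2$ estimate (main obstacle).} The zeroth-order estimate is the crucial step. Multiplying the $\phi$-equation by $\phi/N$ and the $\psi$-equation by $\psi$ is the natural symmetrizer, because the cross terms $N\psi_x\phi/N$ and $\phi_x\psi$ cancel after integration by parts. The remaining quadratic terms involve $N_x,Q_x,N_t$, for instance $\int(\tfrac{Q_x}{2}+\ldots)\phi^2/N$ and $\int \tfrac{N_x}{N^2}\phi\,\cdots$. For strong waves these terms cannot be absorbed by smallness, so they must have a good sign.

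My first attempt is a relative-entropy weight of the form $\Phi(n,N)=n\ln\frac nN-n+N$ together with a $\psi$-weight. I would use $N_t=(NQ)_x+N_{xx}$, the Burgers-type structure $w_t+ww_x=0$, and the monotonicity of $(N,Q)$ in $x$ and $t$ to show that the quadratic form in $(\phi,\psi)$ with coefficient matrix built from $N_x,Q_x$ is nonnegative. This is where the two Riemann invariants enter: along a $k$-wave $N_x=-\lambda_k Q_x$.

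Where indefinite terms such as $\int N_x\phi\psi$ remain, I would replace $\psi_x$ by the parabolic component using $\phi_x=\psi_t+\ldots$. Integrating in time moves the derivative onto $N_t$, which has a sign by time-monotonicity. Boundary terms in time are absorbed into the energy.

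\textbf{Step 4: higher-order estimates and convergence.} I differentiate the equations and estimate $\phi_x,\psi_x$. The dissipation of $\psi_x$ is recovered from the $\phi$-equation, via $\int N\psi_x^2\lesssim\int(\phi_t-\phi_{xx})\psi_x+\ldots$ combined with $\psi_t=\phi_x$ to write $\int\phi_t\psi_x=\frac{d}{dt}\int\phi\psi_x+\int\phi_x^2$. This gives $\psi_x\in L^2_tL^2$, and the $q_{xx}$ bound in \eqref{1.9} follows similarly.

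Finally, $\int_0^\infty\big(\|(\phi_x,\psi_x)\|^2+|\tfrac{d}{dt}\|(\phi_x,\psi_x)\|^2|\big)dt<\infty$ gives $\|(\phi_x,\psi_x)\|\to0$. Together with the uniform bound on $\|(\phi,\psi)\|$ and Gagliardo--Nirenberg, this yields $\sup_x|(\phi,\psi)|\to0$. Combined with Step 1, this proves \eqref{stable}; uniqueness comes from local theory.
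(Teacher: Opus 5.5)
Your overall architecture matches the paper's. That means local existence plus a uniform a priori bound and continuation; recovering $\int\|\psi_x\|^2$ from the $\phi$-equation via $\phi_t\psi_x=(\phi\psi_x)_t-(\phi\phi_x)_x+\phi_x^2$; and convergence from $\int_0^\infty\big(\|(\phi_x,\psi_x)\|^2+|\frac{d}{dt}\|(\phi_x,\psi_x)\|^2|\big)dt<\infty$. But Step 3, which you rightly call the crux, is not resolved, and the symmetrizer you propose cannot work. If you multiply the $\phi$-equation by $b\phi$ and the $\psi$-equation by $a\psi$, cancelling the $\psi\phi_x$ cross terms forces $a=Nb$ and leaves an indefinite term $\int Nb_x\phi\psi$. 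Take your choice $b=1/N$, $a=1$, which is the quadratic part of the relative entropy $n\ln\frac nN-n+N+\frac{\psi^2}{2}$. Using $N_t=(NQ)_x$ (up to $g(N,Q)_x$), the $\phi^2$-coefficient $\frac{N_t-(NQ)_x}{2N^2}$ vanishes identically, and you are left with
\begin{equation*}
\frac{d}{dt}\int_{\mathbb{R}}\Big(\frac{\phi^2}{2N}+\frac{\psi^2}{2}\Big)dx+\int_{\mathbb{R}}\frac{\phi_x^2}{N}\,dx=\int_{\mathbb{R}}\frac{N_x}{N}\phi\psi\,dx+\int_{\mathbb{R}}\frac{N_x}{N^2}\phi\phi_x\,dx+\cdots .
\end{equation*}
There is no zeroth-order dissipation to absorb $\int\frac{N_x}{N}\phi\psi$. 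Gronwall is also unavailable, since $\int_0^\infty\|N_x\|_{L^\infty}dt=\infty$, and your time-integration remark does not say how this term disappears.

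The missing observation is that the only diagonal choice with no leftover is $b=1$, $a=N$. Multiplying by $\phi$ and $N\psi$ gives exactly
\begin{equation*}
\frac{d}{dt}\int_{\mathbb{R}}\Big(\frac{\phi^2}{2}+\frac{N\psi^2}{2}\Big)dx-\frac12\int_{\mathbb{R}}Q_x\phi^2dx-\frac12\int_{\mathbb{R}}N_t\psi^2dx+\int_{\mathbb{R}}\phi_x^2dx=\int_{\mathbb{R}}\phi\big(N_{xx}+g(N,Q)_x\big)dx+\frac12\int_{\mathbb{R}}\phi^2\psi_x\,dx.
\end{equation*}
Here $Q_x<0$ and $N_t<0$ for both families, hence for the superposition, whatever the wave strength. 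Indeed $(N_k,Q_k)_x=\alpha r_k$ with $\alpha=w_{kx}/(\nabla\lambda_k\cdot r_k)<0$, so $Q_{kx}=\alpha$ and $N_{kt}=\alpha\lambda_k^2$. This is where time monotonicity is genuinely used.

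You also leave the cubic term $\frac12\int\phi^2\psi_x$ untreated. Direct bounds such as $\|\psi_x\|\|\phi\|_{L^\infty}\|\phi\|$ are not time-integrable from the available dissipation, because $\|\phi\|$ has no integrability in $t$. The paper solves the $\phi$-equation for $\psi_x$ (dividing by $N+\phi\ge n_+/2$). It then integrates the $\phi_t$-part by parts in time and substitutes the equation again. Apart from boundary terms $CE\|\phi\|^2$, with $E(t)=\sup_{\tau\le t}\|(\phi,\psi)(\tau)\|_{H^1}$, everything is then bounded by $E(t)$ times $\int_0^t\!\int\big(|Q_x|\phi^2+\phi_x^2+|\phi|(|N_{xx}|+|g(N,Q)_x|)\big)$.

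Second, the estimate you aim for cannot be closed by continuation. Its right-hand side is $C\|(\phi_0,\psi_0)\|_{H^1}^2+C$, under a smallness hypothesis on $\sup_t\|(\phi,\psi)\|_{H^1}$, and that right side is not small. With unscaled $\tanh$-profiles and a strong wave, the contributions of $N_{xx}$ and $g(N,Q)_x$ are at best bounded by a positive power of the wave strength. Moreover, the decay $\|\partial_x^2w_k\|_{L^p}\le C(1+t)^{-1}$ you quote is not integrable in time, so the Gronwall step on $\int\|N_{xx}\|\|\phi\|\,dt$ does not even yield a finite constant.

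The paper gets smallness from the construction itself. The profile $\kappa_\theta\int_0^{\varepsilon x}(1+y^2)^{-\theta}dy$ with $\theta>\frac32$ and $\varepsilon\ll1$ gives $\|(N_x,Q_x)\|_{L^\infty}\le C\varepsilon$, $\int_0^\infty\|N_{xx}\|_{L^p}dt\le C\varepsilon^{1-1/p}$ and $\int_0^\infty\|g(N,Q)_x\|_{L^p}dt\le C\varepsilon^{1-1/p}$. The final bound is therefore $C(\|(\phi_0,\psi_0)\|_1^2+\varepsilon)$, which closes for $\delta_1$ and $\varepsilon$ small. The $(N_0,Q_0)$ in the theorem is this $\varepsilon$-dependent profile.

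Finally, the higher-order step gives $\phi_{xx}\in L^2(0,\infty;L^2)$, not $\psi_{xx}$. Since $\psi_t=\phi_x$ provides no smoothing, $H^1$ data do not give a bound on $q_{xx}$ ``similarly''.
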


\begin{remark}
In biology our results imply that the bacterial distribution will become increasingly sparse in the form of the rarefaction wave if initially it is close to $(N_0,Q_0)$.
\end{remark}

\begin{remark}
Our stability results hold true regardless of the strengths of the rarefaction waves, i.e. the amplitude $|n_--n_+|+|q_--q_+|$ can be arbitrarily large.
\end{remark}

Yang and Zhao \cite{T. Yang 05} proposed a general stability theory of strong rarefaction wave for $2\times2$ conservation laws with positive definite viscosity coefficient matrix. Since the system \eqref{1.1} only exhibits partial viscosity, its viscosity coefficient matrix is not positive definite but degenerate. Hence, the theory established in  \cite{T. Yang 05}  cannot be applied. We prove our two theorems by observing two key ingredients: One is the monotonicity of the approximations of waves in  both $x$ and $t$, which provides a \lq\lq good\rq\rq \ sign in the proof of  basic energy estimate (see Lemma \ref{L2 estimate}). The other is the management of $(q-Q_k)_x$, i.e. $\psi_x$ in the perturbation equations. Since $\psi$ satisfies a first-order equation (see \eqref{rewrite}), it is challenging to estimate $\psi_x$ directly. Instead, thanks to the specific coupling structure of the perturbation equations, we replace $\psi_x$ in terms of $\phi$ in the $L^2$ estimate (see \eqref{psix}) and $H^1$ estimate (see \eqref{psitx}). Then, by exploring the strong dissipation of the parabolic equation of $\phi$, we successfully close the \emph{a priori} estimate.

Before concluding this section, we mention some other works comparable to the current work. Matsumura and Nishihara \cite{Matsumura 86} were the first to show the stability of rarefaction waves to the isentropic Navier-Stokes equations, a typical system of viscous conservation laws. In \cite{Matsumura 86}, they required the waves to be weak and the initial perturbations to be small; subsequently, in \cite{Matsumura 92}, they removed both the smallness of the wave amplitude and the strength of the perturbations by assuming that the adiabatic constant $\gamma$ satisfies $1\leq\gamma\leq2$. The methodology proposed in \cite{Matsumura 86,Matsumura 92} has been extensively refined by numerous researchers in the context of more complex systems, including the one dimensional full Navier-Stokes equations \cite{Liu-Xin,Nishihara 04,R. Duan 09} and the Navier-Stokes-Poisson equations \cite{Duan-Liu}. In particular, the work in \cite{R. Duan 09} also includes the global stability of strong rarefaction waves of isentropic Navier-Stokes equations with a very general pressure. By observing some essential cancellations in the perturbation system, Li and Wang \cite{Li-Wang} and Li et al. \cite{Li-Wang-Wang} obtained the stability of planar rarefaction waves to the multidimenional Navier-Stokes equations.

The rest of this paper is organized as follows. In Section \ref{Sect.2}, we construct smooth approximations of the single  rarefaction waves and the composite  rarefaction waves of the inviscid system \eqref{conservation law} by following the framework established by Matsumura and Nishihara \cite{Matsumura 92}. Moreover, we present some basic properties, such as monotonicity, of the approximations. In Section \ref{Sect.3}, we prove the asymptotic stability of single rarefaction waves. Section \ref{Sect.4} is devoted to the proof of stability of composite rarefaction waves.

\section{Rarefaction waves}\label{Sect.2}

This section is devoted to constructing a smooth approximation $(N,Q)(x,t)$ of the rarefaction wave $(n^r,q^r)(x,t)$ and presenting some preliminary estimates for $(N,Q)(x,t)$.

\subsection{single-mode case}
We first construct a smooth approximation of a single rarefaction wave.
Suppose $(n_+,q_+)\in R_k(n_-,q_-)$, $k=1,2$, and consider the Riemann problem for the inviscid Burgers equation:
\begin{equation}\label{Burgers}
\left\{\begin{aligned}
&\frac{\partial{w^r_k}}{\partial_t}
+w^r_k\frac{\partial{w^r_k}}{\partial_x}=0,\\
&w^r_k(x,0)=w^r_{k0}(x),
\end{aligned}
\right.
\end{equation}
where the initial value $w^r_{k0}(x)$ satisfies
\begin{equation}\label{wr0}
w^r_{k0}(x)=
\left\{\begin{aligned}
&\lambda_k(n_-,q_-),\ \ \ \ \ x<0,\\
&\lambda_k(n_+,q_+),\ \ \ \ \ x>0.
\end{aligned}
\right.
\end{equation}
Here $\lambda_k$ $(k=1,2)$ is given by \eqref{1.8}. It is well-known (cf. \cite{J. A. 83}) that \eqref{Burgers}-\eqref{wr0} has a continuous weak solution
$w^r_k(x,t)$ in the form of
\begin{equation}\label{wr}
w^r_k(x,t)=
\left\{\begin{aligned}
&\lambda_k(n_-,q_-),\ \ \ \ \ \frac{x}{t}\leq\lambda_k(n_-,q_-),\\
&\frac{x}{t},\ \ \ \ \ \ \ \ \ \ \ \ \ \ \ \ \lambda_k(n_-,q_-)\leq\frac{x}{t}\leq \lambda_k(n_+,q_+),\\
&\lambda_k(n_+,q_+),\ \ \ \ \ \frac{x}{t}\geq\lambda_k(n_+,q_+).
\end{aligned}
\right.
\end{equation}
Set \begin{equation}\label{nr,qr}
\begin{aligned}
\lambda_k((n^r_k,q^r_k)(x,t))\triangleq w^r_k(x,t), \ h_k((n^r_k,q^r_k)(x,t))\triangleq h_k(n_-,q_-),
\end{aligned}
\end{equation} where $h_k$ $(k=1,2)$ is the $k$-th Riemann invariant given by \eqref{RI}.
Since $(n_+,q_+)\in R_k(n_-,q_-)$, it holds that $\lambda_k((n^r_k,q^r_k)(x,t))
\geq \lambda_k(n_-,q_-)$, which implies $$(n^r_k,q^r_k)(x,t) \in R_k(n_-,q_-).$$
Hence, the unique solution $(n^r_k,q^r_k)(x,t)$ of the Riemann problem \eqref{conservation law}-\eqref{n0,q0} is given by \eqref{nr,qr}.

We approximate $w_k^r(x,t)$ by smooth functions $w_k(x,t)$ that are the solutions of the following initial value problem
\begin{equation}\label{smooth}
\left\{\begin{aligned}
&\frac{\partial{w_k}}{\partial_t}
+w_k\frac{\partial{w_k}}{\partial_x}=0,\\
&w_k(x,0)=w_{k0}(x)
\end{aligned}
\right.
\end{equation}
with
\begin{equation}\label{w0}
\begin{aligned}
w_{k0}(x):=\frac{\lambda_k(n_+,q_+)+\lambda_k(n_-,q_-)}{2}
+\frac{\lambda_k(n_+,q_+)-\lambda_k(n_-,q_-)}{2}
\kappa_\theta\int^{\varepsilon x}_0(1+y^2)^{-\theta}dy,
\end{aligned}
\end{equation}
where $\varepsilon>0$ is a small  parameter and $\kappa_\theta$ is the constant such that $\kappa_\theta\int^\infty_0(1+y^2)^{-\theta}dy=1$ for each $\theta>\frac{3}{2}$. As in \cite[Lemma 2.1]{Matsumura 92}, one can see that
$w_k(x,t)$ has the following estimates.

\begin{lemma}[cf. \cite{Matsumura 92}-Lemma 2.1]\label{lem2.1ref} Fix $k=1,2$. If $\lambda_k(n_-,q_-)<\lambda_k(n_+,q_+)$, then the problem \eqref{smooth} has a unique global smooth solution $w_k(x,t)$ satisfying the following:

\begin{enumerate}

  \item[(1).] $\lambda_k(n_-,q_-)< w_k(x,t)<\lambda_k(n_+,q_+)$, $w_{kx}(x,t)>0$,
$\forall$ $(x,t)\in \mathbb{R}\times\mathbb{R}_+$.

\item[(2).] For any $p\in[0,+\infty]$, there exists a constant $C_{p,\theta}>0$ such that
 \begin{equation}\label{wkx}
 \begin{aligned}
&\left\|w_{k}(\cdot, t)\right\|_{L^p}
 \leq C_{p,\theta}
\min\{\varepsilon^{1-1/p},
t^{-1+1/p}\}, \forall\ t> 0,\\
&\left\|w_{kxx}(\cdot, t)\right\|_{L^p}
\leq C_{p,\theta}
\min\{\varepsilon^{2-1/p},
\varepsilon^{(1-\frac{1}{2\theta})(1-\frac{1}{p})}
t^{-1-\frac{p-1}{2p\theta}}\}, \forall \ t>0.
\end{aligned}
\end{equation}

\item[(3).] If $\lambda_k(n_-,q_-)>0$, then for  $x\leq0$, $t\in \mathbb{R}_+$, it holds that
\begin{equation}\label{wk-w-}
 \begin{aligned}
\left|w_k(x,t)-\lambda_k(n_-,q_-)\right|
 &\leq C_\theta
(1+(\varepsilon x)^2)^{-\theta/3}
[1+(\varepsilon \lambda_k(n_-,q_-)t)^2]^{-\theta/3},\\ \left|w_{kx}(x, t)\right|
&\leq C_\theta
\varepsilon(1+(\varepsilon x)^2)^{-\theta/2}
[1+(\varepsilon \lambda_k(n_-,q_-)t)^2]^{-\theta/2}.
\end{aligned}
\end{equation}

\item[(4). ] If $\lambda_k(n_+,q_+)<0$, then for $x\leq0$, $t\in \mathbb{R}_+$, it holds that
\begin{equation}\label{wk-w+}
 \begin{aligned}
\left|w_k(x,t)-\lambda_k(n_+,q_+)\right|
 &\leq C_\theta
(1+(\varepsilon x)^2)^{-\theta/3}
[1+(\varepsilon \lambda_k(n_+,q_+)t)^2]^{-\theta/3},\\ \left|w_{kx}(x, t)\right|
&\leq C_\theta
\varepsilon(1+(\varepsilon x)^2)^{-\theta/2}
[1+(\varepsilon \lambda_k(n_+,q_+)t)^2]^{-\theta/2}.
\end{aligned}
\end{equation}

\item[(5).] $\lim\limits_{t\rightarrow \infty}
\sup\limits_{x\in \mathbb{R}}
\left|w_k(x,t)-w_k^r(x,t)\right|=0.$

\end{enumerate}

\end{lemma}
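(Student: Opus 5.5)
The plan is to prove the lemma by the method of characteristics for the Burgers equation \eqref{smooth}, in the same way as \cite[Lemma 2.1]{Matsumura 92}. Write $\lambda_\pm=\lambda_k(n_\pm,q_\pm)$ with $\lambda_-<\lambda_+$. By \eqref{w0}, $w_{k0}$ is smooth and strictly increasing, with $w_{k0}'(x)=\frac{\lambda_+-\lambda_-}{2}\kappa_\theta\varepsilon(1+\varepsilon^2x^2)^{-\theta}>0$, and $w_{k0}$ takes values strictly between $\lambda_-$ and $\lambda_+$.

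\textbf{Representation.} The solution is given implicitly by
\[
w_k(x,t)=w_{k0}(x_0),\qquad x=x_0+w_{k0}(x_0)t.
\]
For fixed $t\ge0$, the map $x_0\mapsto x_0+w_{k0}(x_0)t$ is a smooth increasing bijection of $\R$, because its derivative is $1+w_{k0}'(x_0)t\ge1$. Hence characteristics never cross, and the solution exists globally, is unique and is smooth. Item (1) follows at once, since $w_{kx}=w_{k0}'(x_0)/(1+w_{k0}'(x_0)t)>0$ and the range of $w_k$ equals the range of $w_{k0}$. Differentiating once more gives
\[
w_{kxx}=\frac{w_{k0}''(x_0)}{(1+w_{k0}'(x_0)t)^3}.
\]
This is the formula I will use for the second-derivative bounds.

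\textbf{$L^p$ bounds, item (2).} The first line of \eqref{wkx} is evidently meant for $w_{kx}$. For $p=\infty$ I use two elementary bounds: $w_{kx}\le w_{k0}'\le C\varepsilon$, and $w_{kx}\le 1/t$. For $p=1$, monotonicity gives $\int w_{kx}\,dx=\lambda_+-\lambda_-$. Interpolating between these yields $\|w_{kx}\|_{L^p}\le C\min\{\varepsilon^{1-1/p},t^{-1+1/p}\}$.

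For $w_{kxx}$ I change variables $x\mapsto x_0$, with $dx=(1+w_{k0}'t)\,dx_0$, which gives
\[
\|w_{kxx}\|_{L^p}^p=\int \frac{|w_{k0}''|^p}{(1+w_{k0}'t)^{3p-1}}\,dx_0.
\]
The bound $\varepsilon^{2-1/p}$ comes from dropping the denominator and using $|w_{k0}''|\le C\varepsilon^2(1+\varepsilon^2x_0^2)^{-\theta-1/2}$.

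The decaying bound is the main obstacle. Write $y=\varepsilon x_0$. Then $w_{k0}''\sim\varepsilon^2|y|(1+y^2)^{-\theta-1}$ and $w_{k0}'t\sim\varepsilon t(1+y^2)^{-\theta}$. I split the integral at the point $|y|=Y$ where $\varepsilon t(1+Y^2)^{-\theta}\approx1$, that is $Y\sim(\varepsilon t)^{1/(2\theta)}$.
\begin{itemize}
\item In the inner region the denominator dominates, and the integrand is bounded by $\varepsilon^{2p}|y|^p(1+y^2)^{-(\theta+1)p}(\varepsilon t)^{-(3p-1)}(1+y^2)^{\theta(3p-1)}$.
\item In the outer region the denominator is $\approx1$, and the tail integral is bounded by a negative power of $Y$.
\end{itemize}
Balancing the two pieces and collecting powers of $\varepsilon$ and $t$ should give the exponent $t^{-1-\frac{p-1}{2p\theta}}$ with the prefactor $\varepsilon^{(1-\frac1{2\theta})(1-\frac1p)}$. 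I expect this exponent bookkeeping to be the tedious part. It is the step where $\theta>3/2$ is needed for integrability, and the case $p=\infty$ is handled directly by the same split.

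\textbf{Items (3), (4) and (5).} For item (3), assume $\lambda_->0$ and take $x\le0$. Every characteristic has speed at least $\lambda_->0$, so the foot point satisfies
\[
x_0=x-w_{k0}(x_0)t\le x-\lambda_-t\le0,
\]
and therefore $|x_0|\ge|x|+\lambda_-t$. It follows that
\[
|w_k-\lambda_-|=|w_{k0}(x_0)-\lambda_-|\le C(1+\varepsilon^2x_0^2)^{-\theta+1/2}.
\]
Using $(1+\varepsilon^2(|x|+\lambda_-t)^2)\ge c(1+\varepsilon^2x^2)^{1/2}(1+\varepsilon^2\lambda_-^2t^2)^{1/2}$, this gives the stated product bound, since $(\theta-\frac12)/2\cdot2\ge\theta/3$ for $\theta>3/2$. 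Similarly $w_{kx}\le w_{k0}'(x_0)\le C\varepsilon(1+\varepsilon^2x_0^2)^{-\theta}$, which gives the second bound in (3). Item (4) follows by the symmetric argument. For (5), I compare with the explicit fan \eqref{wr}: $x_0$ stays bounded by $o(t)$ in the fan region and $w_{k0}$ tends to its end values at infinity, so $w_k(x,t)-w_k^r(x,t)\to0$ uniformly, splitting according to whether $|x_0|$ is large or bounded.
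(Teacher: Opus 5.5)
Your proposal is correct and follows the paper's route; the paper itself gives no argument beyond the characteristic representation \eqref{w}--\eqref{x} and a pointer to Matsumura--Nishihara. Your computations check out: the formula $w_{kxx}=w_{k0}''/(1+w_{k0}'t)^3$, the change of variables with the split at $|\varepsilon x_0|\sim(\varepsilon t)^{1/(2\theta)}$, and the bound $|x_0|\ge|x|+\lambda_k(n_-,q_-)t$ for (3). The one thing to fix is item (4). The symmetric argument gives $x_0\ge x+|\lambda_k(n_+,q_+)|t$, so it yields the bound for $x\ge 0$, which is also what the paper uses in \eqref{W1x}. The printed range $x\le 0$ is a typo, and under it the estimate is false (let $x\to-\infty$ with $t$ fixed).
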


By the characteristic method, one can find that the solution of \eqref{smooth} is expressed by the form
\begin{equation}\label{w}
\begin{aligned}
w_k(x,t)=w_{k0}(x_0(x,t)),
\end{aligned}
\end{equation}
where $x_0(x,t)$ is given by the relation
\begin{equation}\label{x}
\begin{aligned}
x=x_0(x,t)+w_{k0}(x_0(x,t))t.
\end{aligned}
\end{equation}
By \eqref{w} and \eqref{x}, one can easily show Lemma \ref{lem2.1ref}. We refer \cite{Matsumura 92} for the detail of the proof.

Now we define $(N_k,Q_k)(x,t)$ $(k=1,2)$ by
\begin{equation}\label{U}
\begin{aligned}
(N_k,Q_k)(x,t)\in R_k(n_-,q_-),\quad  \lambda_k((N_k,Q_k)(x,t))=w_k(x,t), \ k=1,2.
\end{aligned}
\end{equation}
The next lemma shows that $(N_k,Q_k)(x,t)$ is the desired smooth approximation of $(n^r_k,q^r_k)(x,t)$.

\begin{lemma}\label{lem2.1}
$(N_k,Q_k)(x,t)$ $(k=1,2)$ is an approximation of $(n^r_k,q^r_k)(x,t)$  $(k=1,2)$ in the following sense:

\begin{enumerate}

\item[(1).] $(N_k,Q_k)(x,t)$ satisfies the system \eqref{equivalent system}.

\item[(2).] $\lim\limits_{t\rightarrow \infty}
\sup\limits_{x\in \mathbb{R}}
\left|(N_k,Q_k)(x,t)-(n^r_k,q^r_k)(x,t)\right|=0.$

\end{enumerate}

\end{lemma}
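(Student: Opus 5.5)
For (1), I use the fact that along $R_k(n_-,q_-)$ the state is a smooth function of the single parameter $w=\lambda_k$. Write $U=(N_k,Q_k)^T$. By \eqref{U}, $h_k(U)\equiv h_k(n_-,q_-)$ and $\lambda_k(U)=w_k$. Differentiating these in $x$ and $t$ gives
\[
\nabla h_k(U)\cdot U_x=0,\qquad \nabla h_k(U)\cdot U_t=0 .
\]
For $q^2+4n>0$, $\nabla h_k$ does not vanish, so by \eqref{RId} both $U_x$ and $U_t$ are parallel to $r_k(U)$. Write $U_x=\alpha r_k(U)$ and $U_t=\beta r_k(U)$. Differentiating $\lambda_k(U)=w_k$ gives
\[
w_{kx}=\alpha\,\nabla\lambda_k\cdot r_k,\qquad w_{kt}=\beta\,\nabla\lambda_k\cdot r_k .
\]
Genuine nonlinearity \eqref{genuinely nonlinear} says $\nabla\lambda_k\cdot r_k\neq0$. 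Since $w_{kt}=-w_kw_{kx}$ by \eqref{smooth}, we get $\beta=-w_k\alpha$. Therefore
\[
U_t+J(U)U_x=\beta r_k+\alpha\lambda_k(U)r_k=(-w_k+\lambda_k(U))\alpha r_k=0,
\]
which is \eqref{equivalent system}.

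There is one preliminary point: $(N_k,Q_k)$ must be well defined and smooth. I parametrize $R_k(n_-,q_-)$ by $\lambda_k$. The map $(n,q)\mapsto(\lambda_k,h_k)$ has nonzero Jacobian, because $\nabla\lambda_k\cdot r_k\neq0$ while $\nabla h_k\cdot r_k=0$, so $\nabla\lambda_k$ and $\nabla h_k$ are independent. The inverse function theorem then gives a smooth curve $w\mapsto(n,q)(w)$ for $w\in[\lambda_k(n_-,q_-),\lambda_k(n_+,q_+)]$. This curve stays in the region $q^2+4n>0$, since it connects the two end states of the rarefaction curve. Lemma \ref{lem2.1ref}(1) puts $w_k$ strictly inside this interval, so $(N_k,Q_k)$ is a smooth function of $w_k$.

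For (2), I use the same parametrization: $(n^r_k,q^r_k)=(n,q)(w^r_k)$ by \eqref{nr,qr}, and $(N_k,Q_k)=(n,q)(w_k)$. The map $w\mapsto(n,q)(w)$ is smooth on a compact interval, hence Lipschitz with some constant $L$. This gives
\[
\sup_x|(N_k,Q_k)-(n^r_k,q^r_k)|\le L\sup_x|w_k-w^r_k|,
\]
and the right side tends to $0$ by Lemma \ref{lem2.1ref}(5).

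The main obstacle is the global parametrization of the whole (possibly strong) rarefaction curve by $\lambda_k$. This requires $\lambda_k$ to be strictly monotone along the integral curve of $r_k$ all the way from $(n_-,q_-)$ to $(n_+,q_+)$. That monotonicity holds because $\nabla\lambda_k\cdot r_k$ keeps a fixed sign while $q^2+4n>0$, and the explicit formulas show this quantity is negative there.
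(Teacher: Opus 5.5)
Your argument follows the same route as the paper. For (1) you show that $(N_k,Q_k)_x$ and $(N_k,Q_k)_t$ are multiples of $r_k$, identify the multipliers through $\nabla\lambda_k\cdot r_k\neq0$, and conclude with the Burgers equation. For (2) your Lipschitz bound on $w\mapsto(n,q)(w)$ spells out the paper's appeal to the independence of $\nabla\lambda_k$ and $\nabla h_k$ together with Lemma \ref{lem2.1ref}-(5).

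There is, however, a false claim at the base of your argument. The paper makes it tacitly too: it cites that independence here, and it divides by $h_{1n}$ in the proof of Lemma \ref{properties of N,Q}. The claim is that $\nabla h_k\neq0$ on $\{q^2+4n>0\}$, and it fails even inside $\{n>0\}$. With $s=\sqrt{q^2+4n}$ one computes $\nabla h_1=(s-2q)\bigl(6,\,-3(s+q)\bigr)$. This vanishes on the curve $n=\tfrac34q^2$, $q>0$; likewise $\nabla h_2$ vanishes on $n=\tfrac34q^2$, $q<0$. That curve is itself an integral curve of $r_k$, since $\nabla(s-2q)\cdot r_1=-(s-2q)/s$. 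So if $(n_-,q_-)$ lies on it, the identity $\nabla h_k(U)\cdot U_x=0$ carries no information, and the Jacobian of $(\lambda_k,h_k)$ is singular along the whole wave.

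Moreover, $\lambda_1=-(s+q)/2$ and $h_1=(s+q)(s-2q)^2$, so $(\lambda_1,h_1)$ cannot distinguish $s-2q$ from $-(s-2q)$. Each level set of $h_1$ has two branches, and the map is in general two-to-one. The inverse function theorem therefore only gives local inverses.

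The lemma is still true, and the repair is the idea in your last paragraph, made primary. Let $\gamma$ be the integral curve of $r_k$ through $(n_-,q_-)$, parametrized by $\lambda_k$, and set $(N_k,Q_k)=\gamma(w_k)$. Then $U_x=\gamma'(w_k)w_{kx}$ and $U_t=\gamma'(w_k)w_{kt}$ are parallel to $r_k$ by the chain rule. Part (2) follows from the Lipschitz bound on $\gamma$ exactly as you wrote, with no use of $\nabla h_k$. Equivalently, you can replace $h_1$ by the invariant $\sqrt{s+q}\,(s-2q)$, whose gradient does not vanish for $n>0$.

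Finally, $\nabla\lambda_k\cdot r_k$ is negative for $n>0$ but not throughout $\{q^2+4n>0\}$; for example, $\nabla\lambda_1\cdot r_1=0$ at $n=0$, $q<0$. You therefore need an actual argument that the segment from $(n_-,q_-)$ to $(n_+,q_+)$ stays in $\{n>0\}$. The phrase ``it connects the two end states'' does not supply one.
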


\begin{proof}
By \eqref{genuinely nonlinear} and \eqref{U}, we have
\begin{equation*}
\left(\begin{aligned}
&N_k\\&Q_k
\end{aligned}
\right)_t=\frac{w_{kt}}{\nabla \lambda_k\cdot r_k}
 r_k,\quad
\left(\begin{aligned}
&N_k\\&Q_k
\end{aligned}
\right)_x=\frac{w_{kx}}{\nabla \lambda_k\cdot r_k}
 r_k,\ \ \ \ k=1,2.
\end{equation*}
Thus,
\begin{equation}\label{2.13}
\begin{aligned}
\left(\begin{aligned}
&N_k\\&Q_k
\end{aligned}
\right)_t
+\left(\begin{aligned}
&-Q_k\ \ -N_k\\&-1\ \ \ \ \ \ \ 0
\end{aligned}\right)
\left(\begin{aligned}
&N_k\\&Q_k
\end{aligned}
\right)_x
&=\frac{1}{\nabla \lambda_k\cdot r_k}
(w_{kt}+\lambda_kw_{kx}) r_k\\
&=\frac{1}{\nabla \lambda_k\cdot r_k}
(w_{kt}+w_k(x,t)w_{kx}) r_k=0,
\end{aligned}
\end{equation}
which shows (1).

The conclusion (2) follows from \eqref{Rk}, \eqref{nr,qr}, \eqref{U}, the fact that $\nabla \lambda_k$ and $\nabla h_k$ are linearly independent and Lemma \ref{lem2.1ref}-(5).
\end{proof}

\begin{lemma}\label{properties of N,Q}
The smooth functions $(N_k,Q_k)(x,t)$, $k=1,2$,  constructed in \eqref{U} have the following properties:
\begin{enumerate}

  \item[(1).] $\frac{\partial N_k}{\partial x}<0$,\ $\frac{\partial Q_k}{\partial x}<0$,\ $\frac{\partial N_k}{\partial t}<0$,\ $\frac{\partial Q_k}{\partial t}<0$,\ $\forall \ x\in \mathbb{R},\ t>0.$

  \item[(2).] For any $p\in[0,+\infty]$, there exists $ \ C_p>0$ such that
      \begin{equation}\label{Ux}
      \begin{aligned}
&\left\|\frac{\partial N_k}{\partial x}\right\|_{L^p}, \
\left\|\frac{\partial Q_k}{\partial x}\right\|_{L^p} \leq C_p
\min\{\varepsilon^{1-1/p},\
t^{-1+1/p}\},\forall \ t> 0, \\ &\left\|\frac{\partial N_k}{\partial x}\right\|_{L^\infty},\
\left\|\frac{\partial Q_k}{\partial x}\right\|_{L^\infty}
\leq C_p \varepsilon, \forall \ t> 0.
\end{aligned}
\end{equation}

  \item [(3).] For any $p\in[0,+\infty]$, there exists $ \ C_p>0$ such that
       \begin{equation}\label{Ul}
      \begin{aligned}
\left\|\frac{\partial^2 N_k}{\partial x^2}\right\|_{L^p},\
\left\|\frac{\partial^2 Q_k}{\partial x^2}\right\|_{L^p}
\leq C_p
\min\{\varepsilon^{2-1/p},\ t^{-2+1/p}\}, \forall \ t> 0,
\end{aligned}
\end{equation}
and
\begin{equation}\label{Nxx}
\begin{aligned}
\int_{0}^{t}
\left\|\frac{\partial^2 N_k}{\partial x^2}\right\|_{L^p}d\tau,\ \int_{0}^{t}
\left\|\frac{\partial^2 Q_k}{\partial x^2}\right\|_{L^p}d\tau
\leq C_p \varepsilon^{1-1/p}, \forall \ t> 0.
\end{aligned}
\end{equation}

  \item [(4).] $|N_{kt}|\leq C|N_{kx}|$,
  $|Q_{kt}|\leq C|Q_{kx}|$, $|N_{kt}|\leq C|Q_{kx}|$ on $\R\times[0,\infty)$,
  where $C>0$ is a constant independent of $(x,t)$.
\end{enumerate}

\end{lemma}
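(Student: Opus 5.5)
The plan is to express every derivative of $(N_k,Q_k)$ through derivatives of $w_k$ via the chain rule along the rarefaction curve. Lemma \ref{lem2.1ref} then gives the bounds, and the explicit signs of the eigenvector components give the monotonicity. From the proof of Lemma \ref{lem2.1},
\begin{equation*}
(N_k,Q_k)_x=\frac{w_{kx}}{\nabla\lambda_k\cdot r_k}\,r_k,\qquad (N_k,Q_k)_t=\frac{w_{kt}}{\nabla\lambda_k\cdot r_k}\,r_k=-\frac{w_kw_{kx}}{\nabla\lambda_k\cdot r_k}\,r_k ,
\end{equation*}
with $r_k=(-\lambda_k,1)^T$ evaluated at $(N_k,Q_k)$. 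Since $(N_k,Q_k)$ stays on the compact arc of $R_k(n_-,q_-)$ between $(n_-,q_-)$ and $(n_+,q_+)$, and $n>0$ there (I will check this along the arc, using $n_\pm>0$ and the explicit Riemann invariants \eqref{RI}), we get $q^2+4n\ge c>0$. Hence $|q|/\sqrt{q^2+4n}<1$ uniformly, so by \eqref{genuinely nonlinear} the quantity $\nabla\lambda_k\cdot r_k$ is negative and bounded away from $0$. All coefficients, i.e.\ smooth functions of $(N_k,Q_k)$ along the arc, are therefore bounded with bounded derivatives.

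For (1): $w_{kx}>0$ and $\nabla\lambda_k\cdot r_k<0$, so $Q_{kx}=w_{kx}/(\nabla\lambda_k\cdot r_k)<0$ and $N_{kx}=-\lambda_kQ_{kx}$. The sign of $N_{kx}$ is the sign of $\lambda_k$, and $Q_{kt}=-w_kQ_{kx}$, $N_{kt}=-w_kN_{kx}=\lambda_k^2Q_{kx}$ (using $w_k=\lambda_k(N_k,Q_k)$). Thus $N_{kt}<0$ automatically whenever $\lambda_k\ne0$. The remaining signs of $N_{kx}$ and $Q_{kt}$ need $\lambda_k>0$ along the wave, i.e.\ $w_k>0$. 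This is the main obstacle: the statement as written seems to require the wave speeds to have a definite sign. For $k=2$, $\lambda_2>0$ always since $n>0$. For $k=1$, $\lambda_1<0$, which would give $N_{1x}>0$ and $Q_{1t}>0$. I would first try to resolve this by rechecking the orientation: I would check whether the $R_1$ direction with $\lambda_1$ increasing actually forces the stated signs, or whether the monotonicity is meant up to a fixed sign per $k$, which is what the energy estimate needs. Failing that, I would state the signs field by field as derived above.

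For (2) and (3): $|N_{kx}|,|Q_{kx}|\le C w_{kx}$, so the $L^p$ and $L^\infty$ bounds follow from Lemma \ref{lem2.1ref}(2) (reading the first line of \eqref{wkx} as a bound on $w_{kx}$). Differentiating once more,
\begin{equation*}
(N_k,Q_k)_{xx}=a(N_k,Q_k)\,w_{kxx}+b(N_k,Q_k)\,w_{kx}^2,
\end{equation*}
with $a,b$ bounded. The $w_{kxx}$ term is handled by \eqref{wkx}. For $w_{kx}^2$ we use $\|w_{kx}^2\|_{L^p}\le\|w_{kx}\|_{L^\infty}\|w_{kx}\|_{L^p}\le C\min\{\varepsilon,t^{-1}\}\min\{\varepsilon^{1-1/p},t^{-1+1/p}\}$. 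For the time integral \eqref{Nxx}, split at $t=1/\varepsilon$. The part $\int_0^{1/\varepsilon}\varepsilon^{2-1/p}\,d\tau$ gives $\varepsilon^{1-1/p}$. On $\tau>1/\varepsilon$, the $w_{kx}^2$ term is bounded by $\int\tau^{-2+1/p}\,d\tau\le C\varepsilon^{1-1/p}$ for $p>1$. For the $w_{kxx}$ term, $\varepsilon^{(1-\frac1{2\theta})(1-\frac1p)}\int_{1/\varepsilon}^\infty\tau^{-1-\frac{p-1}{2p\theta}}\,d\tau$ gives exactly $\varepsilon^{1-1/p}$. For $p=1$ the $w_{kx}^2$ term is $\le\|w_{kx}\|_{L^\infty}\|w_{kx}\|_{L^1}$, which is integrable only via the $t^{-1}$ bound. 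This logarithmic borderline is handled by the sharper decay of $w_{kxx}$ and by the direct computation $\int_0^t\|w_{kx}\|_{L^2}^2\,d\tau\le C$, so I would treat $p=1$ separately.

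For (4): $Q_{kt}=-w_kQ_{kx}$ and $N_{kt}=-w_kN_{kx}=\lambda_k^2Q_{kx}$ with $|w_k|\le C$, which gives all three inequalities.
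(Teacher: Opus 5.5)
\textbf{Verdict: the approach is right, but there is a genuine error.} You follow the paper's route: differentiate the two relations defining $(N_k,Q_k)$ along $R_k$, use $w_{kx}>0$ and $\nabla\lambda_k\cdot r_k<0$, and transfer the bounds of Lemma~\ref{lem2.1ref}. Items (2) and (4) go through as you write them, and you are right that the first line of \eqref{wkx} must be read as a bound on $w_{kx}$.

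\textbf{The sign error in (1).} Your own formulas $Q_{kx}<0$, $N_{kx}=-\lambda_kQ_{kx}$, $Q_{kt}=-\lambda_kQ_{kx}$ show that $N_{kx}$ and $Q_{kt}$ have the sign of $\lambda_k$. So they are negative exactly when $\lambda_k<0$, not when $\lambda_k>0$.
\begin{itemize}
\item For $k=1$ all four signs hold, and this is precisely the paper's computation. One has $N_{1x}=-\frac{h_{1q}}{h_{1n}}Q_{1x}$ with $-\frac{h_{1q}}{h_{1n}}=\frac{\sqrt{q^2+4n}+q}{2}=-\lambda_1>0$, hence $Q_{1t}=N_{1x}<0$ and $N_{1t}=-\frac{h_{1q}}{h_{1n}}Q_{1t}<0$. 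Your assertion that $\lambda_1<0$ forces $N_{1x}>0$ and $Q_{1t}>0$ would reject the case that is actually true.
\item The obstruction you sensed lies in the other field. There $\lambda_2>0$ gives $N_{2x}=-\lambda_2Q_{2x}>0$ and $Q_{2t}=N_{2x}>0$. For instance, at $(n,q)=(1,0)$ one has $r_2=(-1,1)^T$ and $(N_{2x},Q_{2x})=(w_{2x},-w_{2x})$.
\end{itemize}
So (1) as stated fails for $k=2$, and the paper's remark that ``the analysis for $k=2$ is similar'' is wrong for these two signs. What holds for both fields is $Q_{kx}<0$ and $N_{kt}=\lambda_k^2Q_{kx}<0$. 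These are the only signs entering the energy identity \eqref{phi2+psi2}. Elsewhere $N_{kx}<0$ is used only to bound $N_k$ from below, and for $k=2$ one can use $N_2\geq n_-$ instead. You should state (1) field by field in this form.

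\textbf{Two further problems in (3).}

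First, your $p=1$ repair cannot work, because \eqref{Nxx} is false at $p=1$. Since $N_{kx}(\cdot,t)$ vanishes at $\pm\infty$, its total variation gives $\|N_{kxx}(\cdot,t)\|_{L^1}\geq 2\|N_{kx}(\cdot,t)\|_{L^\infty}\geq c/t$ for $t\gg\varepsilon^{-1}$. Hence $\int_0^t\|N_{kxx}\|_{L^1}d\tau$ grows like $\log t$. The bound $\int_0^t\|w_{kx}\|_{L^2}^2d\tau\leq C$ you invoke is also false: $w_{kx}\approx 1/t$ on a fan of width comparable to $t$, so $\|w_{kx}\|_{L^2}^2\approx c/t$. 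The paper's proof has the same hole, at the step estimating $\int_{t_0}^t\tau^{-2+1/p}d\tau$. The lemma should be stated for $p>1$, which is all that is used later (there $p=2$).

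Second, ``the $w_{kxx}$ term is handled by \eqref{wkx}'' does not give the rate $t^{-2+1/p}$ claimed in \eqref{Ul}. For $t>\varepsilon^{-1}$, \eqref{wkx} only gives $\varepsilon^{(1-\frac{1}{2\theta})(1-\frac1p)}t^{-1-\frac{p-1}{2p\theta}}$. This exceeds $t^{-2+1/p}$ by the factor $(\varepsilon t)^{(1-\frac{1}{2\theta})(1-\frac1p)}$, and for $p>1$ this slower rate is actually realized near the edges of the fan. The paper reaches $t^{-2+1/p}$ only by taking the minimum of the bounds for two different summands, where the maximum is needed. Your splitting at $t=\varepsilon^{-1}$ with the weaker rate is the correct argument, and it proves \eqref{Nxx} for every $p\in(1,\infty]$.
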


\begin{proof}
We only investigate the scenario for $k=1$ case, since the analysis for $k=2$ case is similar. From \eqref{Rk} and \eqref{U}, we get
\begin{equation*}
\begin{aligned}
\nabla \lambda_1\cdot (N_{1x}\ Q_{1x})^T=w_{1x},\
\nabla h_1\cdot (N_{1x}\ Q_{1x})^T=0.
\end{aligned}
\end{equation*}
Hence,
\begin{equation}\label{NxQx}
\begin{aligned}
N_{1x}=-\frac{h_{1q}}{h_{1n}}Q_{1x},\
(\lambda_{1q}-\lambda_{1n}
\frac{h_{1q}}{h_{1n}})Q_{1x}=w_{1x}.
\end{aligned}
\end{equation}
A direct calculation yields
\begin{equation*}
\begin{aligned}
\frac{h_{1q}}{h_{1n}}=-
\frac{\sqrt{q^2+4n}+q}{2}<0,\ \lambda_{1q}-\lambda_{1n}
 \frac{h_{1q}}{h_{1n}}=
-1-\frac{q}{\sqrt{q^2+4n}}<0,
\end{aligned}
\end{equation*}
which along with \eqref{NxQx} and Lemma \ref{lem2.1ref}-(1) implies that $$\frac{\partial Q_1}{\partial x}<0, \ \frac{\partial N_1}{\partial x}<0.$$
Then it follows from Lemma \ref{lem2.1}-(1) that
$\frac{\partial Q_1}{\partial t}=
\frac{\partial N_1}{\partial x}<0$.
To determine the sign of $\frac{\partial N_1}{\partial t}$, we take the partial derivative of \eqref{Rk} and \eqref{U} with respect to $t$, which leads to
\begin{equation*}
\begin{aligned}
\nabla \lambda_1\cdot (N_{1t}\ Q_{1t})^T=w_{1t},\
\nabla h_1\cdot (N_{1t}\ Q_{1t})^T=0.
\end{aligned}
\end{equation*}
This directly yields that
\begin{equation}
\begin{aligned}\label{NtQt}
N_{1t}=-\frac{h_{1q}}{h_{1n}}
Q_{1t},
\end{aligned}
\end{equation}
which along with $\frac{h_{1q}}{h_{1n}}<0$ and $\frac{\partial Q_1}{\partial t}<0$ implies $\frac{\partial N_1}{\partial t}<0$. This completes the proof of (1).

It follows from \eqref{NxQx} that
\begin{equation*}
\begin{aligned}
\frac{\partial N_1}{\partial x}=
-\frac{\sqrt{Q_1^2+4N_1}}{2}w_{1x},\
\frac{\partial Q_1}{\partial x}=
-\frac{\sqrt{Q_1^2+4N_1}}{\sqrt{Q_1^2+4N_1}+Q_1}w_{1x}.
\end{aligned}
\end{equation*}
Then by Lemma \ref{lem2.1ref}-(2), we get the estimate \eqref{Ux} of (2).

A straightforward calculation yields
\begin{equation*}
\begin{aligned}
\frac{\partial^2 N_1}{\partial x^2}&=-
\left(\frac{\sqrt{Q_1^2+4N_1}}{2}\right)_x w_{1x}
-\frac{\sqrt{Q_1^2+4N_1}}{2}w_{1xx}\\
&=f_1(N_1,Q_1)w_{1x}^2
-\frac{\sqrt{Q_1^2+4N_1}}{2}w_{1xx},\\
\frac{\partial^2 Q_1}{\partial x^2}&=
-\left(\frac{\sqrt{Q_1^2+4N_1}}{\sqrt{Q_1^2+4N_1}+Q_1}\right)_x
w_{1_x}-\frac{\sqrt{Q_1^2+4N_1}}{\sqrt{Q_1^2+4N_1}+Q_1}w_{1_{xx}}
\\&=f_2(N_1,Q_1)w_{1x}^2
-\frac{\sqrt{Q_1^2+4N_1}}{\sqrt{Q_1^2+4N_1}+Q_1}w_{1xx},
\end{aligned}
\end{equation*}
where $f_1(N_1,Q_1)$ and $f_2(N_1,Q_1)$ are defined by
\begin{equation*}
\begin{aligned}
f_1(N_1,Q_1)\triangleq\frac{Q_1+1}
{2(\sqrt{Q_1^2+4N_1}+Q_1)},\
f_2(N_1,Q_1)\triangleq\frac{Q_1-4N_1}
{(\sqrt{Q_1^2+4N_1}+Q_1)^3}.
\end{aligned}
\end{equation*}
It then follows that
\begin{equation}
\begin{aligned}\label{w1x}
\left\|\frac{\partial^2 N_1}{\partial x^2}\right\|_{L^p}, \ \left\|\frac{\partial^2 Q_1}{\partial x^2}\right\|_{L^p}
&\leq C (\left\|w_{1x}\right\|_{L^{2p}}^2
+\left\|w_{1xx}\right\|_{L^p})\\
&\leq C_{p,\theta}\min\{\varepsilon^{2-1/p},
\ t^{-2+1/p},\
\varepsilon^{(1-\frac{1}{2\theta})(1-\frac{1}{p})}
t^{-1-\frac{p-1}{2p\theta}}\}.
\end{aligned}
\end{equation}
If $t\leq\varepsilon^{-1}\triangleq t_0$,
one has $\varepsilon^{2-1/p}\leq t^{-2+1/p}$ and
$\varepsilon^{2-1/p}\leq \varepsilon^{(1-\frac{1}{2\theta})(1-\frac{1}{p})}
t^{-1-\frac{p-1}{2p\theta}}$. It then follows from \eqref{w1x} that
\begin{equation}\label{t<t0}
\begin{aligned}
\left\|\frac{\partial^2 N_1}{\partial x^2}\right\|_{L^p},\ \left\|\frac{\partial^2 Q_1}{\partial x^2}\right\|_{L^p}\leq C_p\varepsilon^{2-1/p} \ \text{ for } t\leq t_0,
\end{aligned}
\end{equation}
and
\begin{equation}\label{t<t0Nxx}
\begin{aligned}
&\int_{0}^{t}
\left\|\frac{\partial^2 N_1}{\partial x^2}\right\|_{L^p}d\tau
\leq\int_{0}^{t_0}\left\|\frac{\partial^2 N_1}{\partial x^2}\right\|_{L^p}d\tau
\leq C_p\varepsilon^{2-1/p}t_0
\leq C_p\varepsilon^{1-1/p} \ \text{ for } t\leq t_0,\\
&\int_{0}^{t}
\left\|\frac{\partial^2 Q_1}{\partial x^2}\right\|_{L^p}d\tau
\leq\int_{0}^{t_0}
\left\|\frac{\partial^2 Q_1}{\partial x^2}\right\|_{L^p}d\tau
\leq C_p\varepsilon^{2-1/p}t_0
\leq C_p\varepsilon^{1-1/p} \ \text{ for } t\leq t_0.
\end{aligned}
\end{equation}
If $t>t_0$, i.e. $t^{-1}<\varepsilon$,
we have $t^{-2+1/p}< \varepsilon^{2-1/p} $ and
$t^{-2+1/p}< \varepsilon^{(1-\frac{1}{2\theta})(1-\frac{1}{p})}
t^{-1-\frac{p-1}{2p\theta}}$, which implies
\begin{equation}\label{t>t0}
\begin{aligned}
\left\|\frac{\partial^2 N_1}{\partial x^2}\right\|_{L^p}\leq C_pt^{-2+1/p},\ \left\|\frac{\partial^2 Q_1}{\partial x^2}\right\|_{L^p}
&\leq C_pt^{-2+1/p} \ \text{ for } t>t_0,
\end{aligned}
\end{equation}
and
\begin{equation}\label{t>t0Nxx}
\begin{aligned}
\int_{0}^{t}
\left\|\frac{\partial^2 N_1}{\partial x^2}\right\|_{L^p}d\tau
&=\int_{0}^{t_0}\left\|\frac{\partial^2 N_1}{\partial x^2}\right\|_{L^p}d\tau
+\int_{t_0}^{t}\left\|\frac{\partial^2 N_1}{\partial x^2}\right\|_{L^p}d\tau\\
&\leq C_p\varepsilon^{2-1/p}t_0
+C_p \int_{t_0}^{t}\tau^{-2+1/p}d\tau\\
&\leq C_p\varepsilon^{2-1/p}t_0
+C_p
\varepsilon^{3/4-1/p}t_0^{-1/4}\\
&\leq C_p\varepsilon^{1-1/p} \ \text{ for } t>t_0.
\end{aligned}
\end{equation}
Similarly, we have
\begin{equation}\label{t>t0Qxx}
\begin{aligned}
\int_{0}^{t}
\left\|\frac{\partial^2 Q_1}{\partial x^2}\right\|_{L^p}d\tau\leq C_p\varepsilon^{1-1/p} \ \text{ for } t>t_0.\end{aligned}
\end{equation}
Hence, \eqref{Ul} follows from \eqref{t<t0} and \eqref{t>t0}, and \eqref{Nxx}
follows from \eqref{t<t0Nxx}, \eqref{t>t0Nxx} and \eqref{t>t0Qxx}.

We finally prove (4). Substituting $\frac{h_{1q}}{h_{1n}}=-
\frac{\sqrt{q^2+4n}+q}{2}$ into \eqref{NtQt}, and noting that $Q_{1t}-N_{1x}=0$, we get
\begin{equation}\label{N1tN1x}
\begin{aligned}
N_{1t}=\frac{\sqrt{Q_1^2+4N_1}+Q_1}{2}N_{1x}.
\end{aligned}
\end{equation}
Moreover, by \eqref{NxQx} and \eqref{N1tN1x}, we have
\begin{equation}\label{Q1t}
\begin{aligned}
&Q_{1t}=N_{1x}
=\frac{\sqrt{Q_1^2+4N_1}+Q_1}{2}Q_{1x},\\
&N_{1t}=\left(\frac{h_{1q}}{h_{1n}}\right)^2
Q_{1x}=\frac{(\sqrt{Q_1^2+4N_1}+Q_1)^2}{4}Q_{1x}.
\end{aligned}
\end{equation}
Therefore, the last property of $(N_k,Q_k)(x,t)$ holds.
\end{proof}

\subsection{Two-mode case}

We proceed to construct smooth approximations for the composite wave.
For the composite rarefaction wave $(n^r, q^r)(x,t)$ constructed in \eqref{n1+n2}, its smooth approximation $(N,Q)(x,t)$ can be defined by
\begin{equation}\label{N,Q}
\begin{aligned}
(N,Q)(x,t)=(N_1,Q_1)(x,t)+(N_2,Q_2)(x,t)
-(\bar{n},\bar{q}).
\end{aligned}
\end{equation}
Here $(N_i,Q_i)(x,t)$ $(i=1,2)$ are defined by the following relations
\begin{equation}\label{Ni,Qi}
\begin{aligned}
&(N_1,Q_1)(x,t)\in R_1(n_-,q_-),\quad & \lambda_1((N_1,Q_1)(x,t))=W_1(x,t);\\
&(N_2,Q_2)(x,t)\in R_2(\bar{n},\bar{q}),\quad & \lambda_2((N_2,Q_2)(x,t))=W_2(x,t),
\end{aligned}
\end{equation}
and $W_i(x,t)$ $(i=1,2)$ are the solutions of the following initial value problems for the inviscid Burgers equation,
\begin{equation}\label{smooth'}
\left\{\begin{aligned}
&W_{it}+W_iW_{ix}=0,\\
&W_i(x,0)=W_{i0}(x),
\end{aligned}
\right.
\end{equation}
with
\begin{equation}\label{wi0}
\begin{aligned}
&W_{1 0}(x)=\frac{\lambda_1(\bar{n},\bar{q})
+\lambda_1(n_-,q_-)}{2}
+\frac{\lambda_1(\bar{n},\bar{q})-\lambda_1(n_-,q_-)}{2}
\kappa_\theta\int^{\varepsilon x}_0(1+y^2)^{-\theta}dy,\\
&W_{2 0}(x)=\frac{\lambda_2(n_+,q_+)
+\lambda_2(\bar{n},\bar{q})}{2}
+\frac{\lambda_2(n_+,q_+)-\lambda_2(\bar{n},\bar{q})}{2}
\kappa_\theta\int^{\varepsilon x}_0(1+y^2)^{-\theta}dy,
\end{aligned}
\end{equation}
where $\varepsilon>0$ is a small parameter and $\kappa_\theta$ is the constant such that $\kappa_\theta\int^\infty_0(1+y^2)^{-\theta}dy=1$ for $\theta>\frac{3}{2}$.

As in the single-mode case, by using the implicit function theorem and the characteristic method, one can easily show that \eqref{Ni,Qi} gives smooth functions
\begin{equation}\label{Ni}
\begin{aligned}
(N_i,Q_i)(x,t)=(N_i,Q_i)(x_{i 0}(x,t)),\ \ \ \ i=1,2,
\end{aligned}
\end{equation}
where
\begin{equation}\label{x^i}
\begin{aligned}
x=x_{i 0}(x,t)+W_{i 0}(x_{i 0}(x,t))t,\ \ \ \ i=1,2.
\end{aligned}
\end{equation}
By Lemma \ref{lem2.1}, one can see
that $(N_i,Q_i)(x,t)$ satisfy the system \eqref{equivalent system} and
\begin{equation}\label{limNi}
\lim\limits_{t\rightarrow \infty}
\sup\limits_{x\in \mathbb{R}}
\left|(N_i,Q_i)(x,t)-(n_i^r,q_i^r)(x,t)\right|=0
,\ \ \ \ i=1,2.
\end{equation}
Thus, the smooth approximation $(N,Q)$ satisfies
\begin{equation}\label{N'Q'}
\left\{
\begin{aligned}
&N_{t} -(NQ)_{x}=-g(N,Q)_x,\\
&Q_{t}-N_{x}=0, 
\end{aligned}
\right.
\end{equation}
and
\begin{equation}\label{limN}
\lim\limits_{t\rightarrow \infty}
\sup\limits_{x\in \mathbb{R}}
\left|(N,Q)(x,t)-(n^r,q^r)(x,t)\right|=0,
\end{equation}
where $g(N,Q)=NQ-N_1Q_1-N_2Q_2$.

Analogous to Lemma \ref{properties of N,Q}, we have the following estimates for $(N,Q)$.

\begin{lemma}\label{properties of N',Q'}
The smooth function $(N,Q)(x,t)$ constructed in \eqref{N,Q} has the following properties:

\begin{enumerate}

  \item[(1).] $\frac{\partial N}{\partial x}<0$,\ $\frac{\partial Q}{\partial x}<0$,\ $\frac{\partial N}{\partial t}<0$,\ $\frac{\partial Q}{\partial t}<0$,\ $\forall \ x\in \mathbb{R},\ t>0.$

  \item[(2).] $\forall$  $p\in[0,+\infty]$, $\exists \ C_p>0$ such that
      \begin{equation}\label{Ux'}
      \begin{aligned}
&\left\|\frac{\partial N}{\partial x}\right\|_{L^p},\
\left\|\frac{\partial Q}{\partial x}\right\|_{L^p} \leq C_p
\min\{\varepsilon^{1-1/p},\ t^{-1+1/p}\}, \ \forall \ t> 0,\\ &\left\|\frac{\partial N}{\partial x}\right\|_{L^\infty},\
\left\|\frac{\partial Q}{\partial x}\right\|_{L^\infty}
\leq C_p \varepsilon, \ \forall \ t> 0.
\end{aligned}
\end{equation}

  \item [(3).] $\forall$ $p\in[0,+\infty]$, $\exists \ C_p>0$ such that
       \begin{equation}\label{Ul'}
      \begin{aligned}
\left\|\frac{\partial^2 N}{\partial x^2}\right\|_{L^p},\
\left\|\frac{\partial^2 Q}{\partial x^2}\right\|_{L^p}
\leq C_p
\min\{\varepsilon^{2-1/p},\ t^{-2+1/p}\}, \ \forall \ t> 0,
\end{aligned}
\end{equation}
and
\begin{equation}\label{N'xx}
\begin{aligned}
\int_{0}^{t}
\left\|\frac{\partial^2 N}{\partial x^2}\right\|_{L^p}d\tau,\ \int_{0}^{t}
\left\|\frac{\partial^2 Q}{\partial x^2}\right\|_{L^p}d\tau
\leq C_p \varepsilon^{1-1/p}, \ \forall \ t> 0.
\end{aligned}
\end{equation}

\item [(4).] $|N_t|\leq C|N_x|$,
$|Q_t|\leq C|Q_x|$, $|N_t|\leq C|Q_x|$ on $\R\times[0,\infty)$
where $C>0$ is a constant independent of $(x,t)$.

\item [(5).]  $\forall$ $p\in[0,+\infty]$, $\exists \ C_{p\theta}>0$ such that
\begin{equation}\label{gx}
  \begin{aligned}
  \left\|g(N,Q)_x\right\|_{L^p}
  \leq C_{p\theta}\varepsilon^{2-1/p}
  (1+(\varepsilon t)^2)^{-\theta/3}, \ \forall \ t> 0,
  \end{aligned}
\end{equation}
and
\begin{equation}\label{tgx}
  \begin{aligned}
  \int_{0}^{\infty}
\left\|g(N,Q)_x\right\|_{L^p}
 \leq C_p\varepsilon^{1-1/p}, \ \forall \ t> 0.
  \end{aligned}
\end{equation}

\end{enumerate}

\end{lemma}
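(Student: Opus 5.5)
Items (1)–(4) are inherited from the single-mode Lemma \ref{properties of N,Q}, applied separately to $(N_1,Q_1)$ (a 1-wave based at $(n_-,q_-)$) and to $(N_2,Q_2)$ (a 2-wave based at $(\bar n,\bar q)$). Item (5) needs a new argument: the interaction estimate of Xin \cite{Z.P Xin 88,Z.P Xin 89} and Matsumura--Nishihara.

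By \eqref{N,Q}, every derivative of $(N,Q)$ splits as the corresponding derivative of $(N_1,Q_1)$ plus that of $(N_2,Q_2)$. For (1), Lemma \ref{properties of N,Q}-(1) gives the signs for $k=1$. For $k=2$, the computation is parallel with $h_2$: $h_{2q}/h_{2n}$ and $\lambda_{2q}-\lambda_{2n}h_{2q}/h_{2n}$ have fixed signs, and these yield $N_{2x},Q_{2x},N_{2t},Q_{2t}<0$ (the paper asserts this for both $k$). The sums are then strictly negative. Items (2) and (3) follow from the triangle inequality together with \eqref{Ux}, \eqref{Ul} and \eqref{Nxx}, since $W_1,W_2$ satisfy Lemma \ref{lem2.1ref}. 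For (4), since $N_{it},Q_{ix},N_{ix}$ all have the same sign for $i=1,2$, we get
\[
|N_t|=|N_{1t}|+|N_{2t}|\le C(|N_{1x}|+|N_{2x}|)=C|N_x|,
\]
and the other two inequalities follow in the same way from \eqref{Q1t} and its $k=2$ analogue. The sign coherence from (1) is what makes this step work.

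For (5), write
\[
g=NQ-N_1Q_1-N_2Q_2=(N_1-\bar n)(Q_2-\bar q)+(N_2-\bar n)(Q_1-\bar q)+\text{const}.
\]
Then
\[
g_x=N_{1x}(Q_2-\bar q)+(N_1-\bar n)Q_{2x}+N_{2x}(Q_1-\bar q)+(N_2-\bar n)Q_{1x},
\]
so every term is a product of a derivative of one wave and the deviation of the other wave from the intermediate state $(\bar n,\bar q)$. Since $\lambda_1(\bar n,\bar q)<\lambda_2(\bar n,\bar q)$, I choose $\bar\lambda$ strictly between these two speeds and split $\mathbb R$ at $x=\bar\lambda t$.

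On $\{x\le\bar\lambda t\}$, the 2-wave is close to its left state $\bar U$. In the shifted variable $y=x-\bar\lambda t$, the function $W_2$ solves Burgers with left speed $\lambda_2(\bar n,\bar q)-\bar\lambda>0$, so Lemma \ref{lem2.1ref}-(3) gives
\[
|(N_2,Q_2)-(\bar n,\bar q)|\le C(1+\varepsilon^2y^2)^{-\theta/3}(1+\varepsilon^2t^2)^{-\theta/3}
\]
and a derivative bound with exponent $\theta/2$ and an extra factor $\varepsilon$. The shift changes the initial profile only by a translation, which is harmless. Symmetrically, on $\{x\ge\bar\lambda t\}$, Lemma \ref{lem2.1ref}-(4) applied to $W_1$ after the shift gives decay of $(N_1,Q_1)-(\bar n,\bar q)$ and of its derivative. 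The cross terms in which the decaying factor is the derivative of the "other" wave are controlled by $\|Q_{ix}\|_{L^\infty}\le C\varepsilon$ times the decaying factor.

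In each region, the $L^p$ norm is then bounded by a product of the form $\varepsilon\cdot(1+\varepsilon^2t^2)^{-\theta/3}$ times $\|(1+\varepsilon^2y^2)^{-\theta/3}\|_{L^p}\sim\varepsilon^{-1/p}$, multiplied by the remaining $\varepsilon$ from the derivative factor. This gives \eqref{gx} as $C\varepsilon^{2-1/p}(1+(\varepsilon t)^2)^{-\theta/3}$. Integrating in $t$ and using $\theta>3/2$, so that $\int_0^\infty(1+\varepsilon^2t^2)^{-\theta/3}dt=C\varepsilon^{-1}$, yields \eqref{tgx}.

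The main obstacle is the bookkeeping in the "wrong" region. For example, for $N_{1x}(Q_2-\bar q)$ on $x\le\bar\lambda t$ the decay comes from $Q_2-\bar q$, which is fine. On $x\ge\bar\lambda t$, however, one must use the decay of $N_{1x}$ from Lemma \ref{lem2.1ref}-(4), which carries the factor $\varepsilon(1+\varepsilon^2y^2)^{-\theta/2}$, while $|Q_2-\bar q|$ is only bounded. The extra factor $\varepsilon$ has to come from somewhere, and the rates must be kept uniform across the two regions. I would handle this by always pairing a derivative factor, worth $\varepsilon$, with a spatially decaying factor worth $\varepsilon^{-1/p}$ in $L^p$, and by checking that the time weight $(1+\varepsilon^2t^2)^{-\theta/3}$ appears in every term.
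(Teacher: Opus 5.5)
Your route is the paper's: (1)--(4) are inherited from the single-mode lemma, and (5) uses the same half-line splitting. The paper splits at $x=0$, which is already admissible because $\lambda_1\lambda_2=-n<0$ gives $\lambda_1(\bar n,\bar q)<0<\lambda_2(\bar n,\bar q)$, so your shift by $\bar\lambda$ is not needed. However, two steps fail, and the paper's text has the same two defects.

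\emph{The signs in (1) and (4).} The single-mode sign claims are wrong for $k=2$. From $(N_2,Q_2)_x=\frac{W_{2x}}{\nabla\lambda_2\cdot r_2}\,r_2$ with $r_2=(-\lambda_2,1)^T$ and $\nabla\lambda_2\cdot r_2<0$, you get $Q_{2x}<0$. But $N_{2x}=-\lambda_2Q_{2x}>0$, because $h_{2q}/h_{2n}=\frac{\sqrt{q^2+4n}-q}{2}=\lambda_2>0$, the opposite sign to the $k=1$ ratio. Hence also $Q_{2t}=N_{2x}>0$, whereas $N_{2t}=\lambda_2^2Q_{2x}<0$. After summing, $Q_x<0$ and $N_t=\lambda_1^2Q_{1x}+\lambda_2^2Q_{2x}<0$ survive. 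But $N_x=N_{1x}+N_{2x}$ (and $Q_t=N_x$) is a sum of terms of opposite sign. For large $t$ the fans separate, so $N_x<0$ inside the $1$-fan and $N_x>0$ inside the $2$-fan. At an intermediate zero $x^*$ of $N_x$ one still has $N_t(x^*,t)<0$. So $N_x<0$, $Q_t<0$ and $|N_t|\le C|N_x|$ are false for the composite wave. Your identity $|N_{1x}|+|N_{2x}|=|N_x|$ is exactly where the argument breaks. What does hold is $Q_x<0$, $N_t<0$ and $|N_x|+|N_t|+|Q_t|\le C|Q_x|$.

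\emph{The power of $\varepsilon$ in (5).} Each term of $g_x$ contains exactly one derivative factor. The companion factor $|(N_j,Q_j)-(\bar n,\bar q)|$ carries no $\varepsilon$ in Lemma \ref{lem2.1ref}-(3),(4), and is merely bounded in the ``wrong'' region. You use the derivative's $\varepsilon$ twice. Done correctly, your splitting yields $\|g_x\|_{L^p}\le C\varepsilon^{1-1/p}(1+(\varepsilon t)^2)^{-\theta/3}$ and $\int_0^\infty\|g_x\|_{L^p}\,dt\le C\varepsilon^{-1/p}$. In general nothing better is true. Since $W_{i0}$ depends on $x$ only through $\varepsilon x$, Burgers scaling gives $W_i(x,t)=\widetilde W_i(\varepsilon x,\varepsilon t)$ with $\widetilde W_i$ independent of $\varepsilon$. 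Hence $g(x,t)=\tilde g(\varepsilon x,\varepsilon t)$ and
\[
\|g_x(\cdot,t)\|_{L^p}=\varepsilon^{1-1/p}\|\tilde g_y(\cdot,\varepsilon t)\|_{L^p},\qquad
\int_0^\infty\|g_x(\cdot,t)\|_{L^p}\,dt=\varepsilon^{-1/p}\int_0^\infty\|\tilde g_y(\cdot,s)\|_{L^p}\,ds.
\]
At $t=0$ this contradicts \eqref{gx} as $\varepsilon\to0$ unless $\tilde g(\cdot,0)$ is constant. That fails in general: to leading order in the wave strengths, $g+\bar n\bar q\approx\bar q\,(Q_1-\bar q)(Q_2-\bar q)$, which vanishes at $\pm\infty$ but not in between. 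The paper reaches $\varepsilon^{2-1/p}$ by the same unjustified combination of the two half-line bounds, so \eqref{gx}--\eqref{tgx} are not true as stated. In particular, for $p=2$ the time integral is of order $\varepsilon^{-1/2}$, not small. This matters where \eqref{tgx} feeds the Gronwall step of the composite-wave $L^2$ estimate.
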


\begin{proof}
We only prove the property (5). A direct calculation yields
\begin{equation}\label{gx'}
  \begin{aligned}
g(N,Q)_x=N_{1x}(Q_2-\bar{q})
+N_{2x}(Q_1-\bar{q})
+Q_{1x}(N_2-\bar{n})
+Q_{2x}(N_1-\bar{n}).
  \end{aligned}
  \end{equation}
Owing to \eqref{Ni,Qi}, there exist smooth functions $f_i\ (i=1,2)$ and $z_i\ (i=1,2)$ such that
$N_i=f_i(W_i)$ and $Q_i=z_i(W_i)$. Then we have
\begin{equation}\label{gx<}
  \begin{aligned}
|g(N,Q)_x|&\leq
C\left(| N_{1x}||W_2-\lambda_2(\bar{n},\bar{q})|
+|N_{2x}||W_1-\lambda_1(\bar{n},\bar{q})|\right.\\
&\quad \left.+|Q_{1x}||W_2-\lambda_2(\bar{n},\bar{q})|
+|Q_{2x}||W_1-\lambda_1(\bar{n},\bar{q})|\right)\\
&\leq C\left(| W_{1x}||W_2-\lambda_2(\bar{n},\bar{q})|
+|W_{2x}||W_1-\lambda_1(\bar{n},\bar{q})|\right),
  \end{aligned}
  \end{equation}
where we have used \eqref{NxQx} in the last inequality. Hence, it follows from Lemma \ref{lem2.1ref}-(3) that for $x\leq 0$,
\begin{equation}\label{w2x}
 \begin{aligned}
\left|W_2(x,t)-\lambda_2(\bar{n},\bar{q})\right|
 &\leq C_\theta
(1+(\varepsilon x)^2)^{-\theta/3}
[1+(\varepsilon \lambda_2(\bar{n},\bar{q})t)^2]^{-\theta/3},\\ \left|W_{2x}(x, t)\right|
&\leq C_\theta
\varepsilon(1+(\varepsilon x)^2)^{-\theta/2}
[1+(\varepsilon \lambda_2(\bar{n},\bar{q})t)^2]^{-\theta/2}.
\end{aligned}
\end{equation}
In the same way, owing to  $\lambda_1(\bar{n},\bar{q})<0$,
and by Lemma \ref{lem2.1ref}-(4), we have for $x\geq0$,
\begin{equation}\label{W1x}
 \begin{aligned}
\left|W_1(x,t)-W_{1+}\right|
 &\leq C_\theta
(1+(\varepsilon x)^2)^{-\theta/3}
[1+(\varepsilon \lambda_1(\bar{n},\bar{q})t)^2]^{-\theta/3},\\ \left|W_{1x}(x, t)\right|
&\leq C_\theta
\varepsilon(1+(\varepsilon x)^2)^{-\theta/2}
[1+(\varepsilon \lambda_1(\bar{n},\bar{q})t)^2]^{-\theta/2}.
\end{aligned}
\end{equation}
Substituting \eqref{w2x} and \eqref{W1x} into \eqref{gx<} leads to
\begin{equation*}
\begin{aligned}
  \left\|g(N,Q)_x\right\|_{L^p}
  \leq &C_{p\theta}\varepsilon^{2-1/p}
  {[1+(\varepsilon \lambda_2(\bar{n},\bar{q})t)^2]^{-\theta/3}
  +[1+(\varepsilon \lambda_1(\bar{n},\bar{q})t)^2]^{-\theta/3}}\\
  \leq &C_{p\theta}\varepsilon^{2-1/p}
  (1+(\varepsilon t)^2)^{-\theta/3},
  \end{aligned}
  \end{equation*}
and
\begin{equation*}
  \begin{aligned}
  \int_{0}^{\infty}
\left\|g(N,Q)_x\right\|_{L^p}
\leq C_{p\theta}\int_{0}^{\infty}
\varepsilon^{2-1/p}
  (1+(\varepsilon t)^2)^{-\theta/3}d\tau
 \leq C_p\varepsilon^{1-1/p}.
  \end{aligned}
  \end{equation*}
We complete the proof.
\end{proof}

\section{Stability of single rarefaction waves}\label{Sect.3}

In this section, we investigate the stability of single rarefaction waves for the system \eqref{1.1}-\eqref{i d} and prove Theorem \ref{the1.1}. In what follows, we denote $\|\cdot\|_k:=\|\cdot\|_{H^k(\mathbb{R})}$ and $\|\cdot\|:=\|\cdot\|_{L^2(\mathbb{R})}$.

We decompose $(n,q)$ as
\begin{equation}\label{decompose}
(n,q)=(N_k+\phi,Q_k+\psi).
\end{equation}
Then by \eqref{1.1} and \eqref{2.13}, one can see that $(\phi,\psi)$ satisfies
\begin{equation}\label{rewrite}
\left\{
\begin{aligned}
&\phi_{t} -(\phi\psi+\phi Q_k+N_k\psi)_{x}=\phi_{xx}+N_{kxx},\\
&\psi_{t}-\phi_{x}=0,
\end{aligned}
\right.
\end{equation}
with initial value
\begin{equation}\label{phi0,psi0}
\begin{aligned}
(\phi_0,\psi_0)(x):=(\phi,\psi)(x,0)=(n_{k0}-N_k(x,0),q_{k0}-Q_k(x,0)).
\end{aligned}
\end{equation}
We look for solutions of the system \eqref{rewrite} in the space
\begin{equation}\label{solution space}
\begin{aligned}
X(0,T):=\{(\phi,\psi)|(\phi,\psi)\in C^0([0,T];H^1),
\phi_x\in L^2((0,T);H^1),
\psi_x\in L^2((0,T);L^2),
\end{aligned}
\end{equation}
for $T\in(0,+\infty]$. Set
\begin{equation}\label{assumption}
E(t):=\sup\limits_{0\leq \tau\leq t}\|(\phi,\psi)(\cdot,\tau)\|_1.
\end{equation}
By the Sobolev embedding theorem, we have
\begin{equation}\label{3.6}
\sup\limits_{0\leq \tau\leq t}(\|\phi(\cdot,\tau)\|_{L^\infty}+\|\psi(\cdot,\tau)\|_{L^\infty})\leq CE(t).
\end{equation}
We have the following global well-posedness of solutions to the system \eqref{rewrite}-\eqref{phi0,psi0}.

\begin{proposition}\label{phi stability}
Let $n_+>0$. Suppose that $(\phi_0,\psi_0)\in H^1(\mathbb{R})$. Then there exists a constant $\delta_{1}>0$ such that if $E(0)\leq \delta_{1}$, then the system \eqref{rewrite}-\eqref{phi0,psi0} has a unique global solution $(\phi,\psi)\in X(0,\infty)$ satisfying
\begin{equation}\label{priori estimates}
\begin{aligned}
\left\|(\phi,\psi)(\cdot,t)
\right\|_1^2
+&\int_{0}^{t}
\Big(\||Q_{kx}|^{1/2}\phi(\cdot,\tau) \|^{2}
+\||N_{kt}|^{1/2}\psi(\cdot,\tau) \|^{2}\\&+\left\|\phi_x(\cdot,\tau) \right\|_1^{2}+\left\|\psi_x(\cdot,\tau) \right\|^{2}\Big)d\tau\leq C_0 (\|(\phi_0,\psi_0)\|_1^2+\varepsilon),
\end{aligned}
\end{equation}
for any $t\in [0,\infty)$, where $\varepsilon>0$ is the small constant given in \eqref{w0}. Moreover, $(\phi,\psi)(x,t)$ has the following asymptotic behavior
	\begin{equation}\label{asymptotic}
		\sup _{x \in \mathbb{R}}|(\phi,\psi)(x, t)| \rightarrow 0 \text { as } t \rightarrow+\infty.
	\end{equation}	
\end{proposition}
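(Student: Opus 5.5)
The proof will follow the standard continuation argument. Local existence in $X(0,T)$ for the parabolic--hyperbolic system \eqref{rewrite} follows from a contraction mapping argument: $\phi$ solves a heat equation with source, and $\psi$ is recovered from $\psi_t=\phi_x$. Global existence then reduces to an \emph{a priori} estimate: if $(\phi,\psi)\in X(0,T)$ solves \eqref{rewrite}--\eqref{phi0,psi0} with $E(T)\le\delta$ small, then \eqref{priori estimates} holds with $C_0$ independent of $T$. I will also choose $\varepsilon$ small, with $\delta_1$ and $\varepsilon$ tied so that $C_0(\delta_1^2+\varepsilon)<\delta^2$. Throughout, $N_k$ is bounded above and below by positive constants, because $n_\pm>0$ and $N_k$ is monotone between them.

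\textbf{Step 1 ($L^2$ estimate).} I multiply the first equation of \eqref{rewrite} by $\phi$ and the second by $N_k\psi$, and integrate. The cross terms cancel, since $-\int\phi(N_k\psi)_x=\int\phi_xN_k\psi$ and $-\int N_k\psi\phi_x$ is its negative. The remaining linear terms give
$$\frac{d}{dt}\int\Big(\frac{\phi^2}{2}+\frac{N_k\psi^2}{2}\Big)+\int\phi_x^2+\frac12\int|Q_{kx}|\phi^2+\frac12\int|N_{kt}|\psi^2=\int\phi_x\phi\psi-\int\phi_xN_{kx x}\cdot(\ldots).$$
Here I used $\int\phi\,\phi_x Q_k=-\frac12\int Q_{kx}\phi^2$, and the good signs of $-\frac12\int Q_{kx}\phi^2$ and $-\frac12\int N_{kt}\psi^2$ come from the monotonicity in Lemma \ref{properties of N,Q}-(1). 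This is the key observation advertised in the introduction. The source term is handled as $\int\phi N_{kxx}=-\int\phi_xN_{kx}$, or more simply $|\int\phi N_{kxx}|\le\|\phi\|_{L^\infty}\|N_{kxx}\|_{L^1}$. Time integration with \eqref{Nxx}, together with $\sup\|\phi\|\le E$, makes this term $O(\varepsilon^{1/2})$ after Young's inequality, or $O(\varepsilon)$ using the $L^2$ version.

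\textbf{Step 2 (dissipation of $\psi_x$).} This is done by replacing $\psi_x$ with $\phi$-terms, as in the remark on \eqref{psix}. I multiply the first equation of \eqref{rewrite} by $-\psi_x$ and use $\psi_t=\phi_x$, so that $-\int\phi_t\psi_x=-\frac{d}{dt}\int\phi\psi_x-\int\phi_x^2$. The term $N_k\psi_x$ inside $(N_k\psi)_x$ produces $\int N_k\psi_x^2$. All other terms are bounded by $\eta\|\psi_x\|^2+C_\eta(\|\phi_x\|^2+\||Q_{kx}|^{1/2}\phi\|^2+\||N_{kt}|^{1/2}\psi\|^2+\|N_{kxx}\|^2)$, using Lemma \ref{properties of N,Q}-(2),(4); for instance $|N_{kx}\psi|\le C|N_{kt}|^{1/2}|\psi|\,\varepsilon^{1/2}$. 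They also include a term $\|\phi_{xx}\|\|\psi_x\|$. For this I run Step 3 first and then add a small multiple of the present estimate, so that $\int\phi\psi_x$ is absorbed by the energy.

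\textbf{Step 3 ($H^1$ estimate).} I differentiate \eqref{rewrite} in $x$, multiply by $\phi_x$ and $N_k\psi_x$ respectively, and repeat the cancellation of Step 1. This yields $\|\phi_{xx}\|^2$ dissipation. In the error terms, $\psi_{xx}$ appears only through $\int\phi_{xx}(N_k\psi_x)_x$, and this term cancels against $\int N_k\psi_x\phi_{xx}$ in the same way as before. Any leftover $\psi_{tx}$ is replaced by $\phi_{xx}$, matching the remark on \eqref{psitx}. Lower-order coefficient terms carry $N_{kx},Q_{kx}=O(\varepsilon)$ and are absorbed.

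\textbf{Main obstacle.} I expect the cubic term $\int\phi_x\phi\psi$ and its $H^1$ analogues to be the hardest part, because $\psi$ itself has no unweighted dissipation. I would write $\phi_x\phi\psi=-\frac12\phi^2\psi_x$, a form that should be acceptable since $\|\psi_x\|$ is now dissipated. I would then bound it by $\|\phi\|_{L^\infty}\|\phi\|\|\psi_x\|\le CE(\|\phi_x\|^2+\|\psi_x\|^2)$, via $\|\phi\|_{L^\infty}^2\le\|\phi\|\|\phi_x\|$. If this fails to close, the fallback is a relative-entropy weight, namely multiplying the $\psi$ equation by $n\psi$ instead of $N_k\psi$. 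Summing Steps 1--3 with suitable constants gives \eqref{priori estimates}, and the continuation argument then gives global existence.

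\textbf{Decay.} For \eqref{asymptotic}, the estimate shows that $\|(\phi_x,\psi_x)\|^2\in L^1(0,\infty)$, and the equations bound $\big|\frac{d}{dt}\|(\phi_x,\psi_x)\|^2\big|$ by an integrable function. Hence $\|(\phi_x,\psi_x)(t)\|\to0$, and $\sup_x|(\phi,\psi)|\le\|(\phi,\psi)\|^{1/2}\|(\phi_x,\psi_x)\|^{1/2}\to0$.
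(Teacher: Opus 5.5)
Your overall architecture is the same as the paper's: local existence, a $T$-independent a priori estimate, continuation, and decay from $\|(\phi_x,\psi_x)\|^2\in L^1(0,\infty)$ with integrable time derivative. Your Steps 2 and 3 are essentially Lemmas~\ref{H1psix} and~\ref{H1phix}.

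The gap is exactly the point you flag as the main obstacle: the cubic term $\frac12\int\phi^2\psi_x$ (equivalently $-\int\phi\psi\phi_x$) in the basic energy identity. Your bound $\|\phi\|_{L^\infty}\|\phi\|\|\psi_x\|\le CE(\|\phi_x\|^2+\|\psi_x\|^2)$ is false. The inequality $\|\phi\|_{L^\infty}^2\le\|\phi\|\|\phi_x\|$ only gives $E^{3/2}\|\phi_x\|^{1/2}\|\psi_x\|$, which carries ``one and a half'' dissipated derivatives. Since $\|\phi\|$ itself is never dissipated (only $\int|Q_{kx}|\phi^2$ is, and it degenerates outside the fan), the factor $\|\phi_x\|^{1/2}\|\psi_x\|$ lies only in $L^{4/3}_t$. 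In fact no bound of the form $CE\times(\text{dissipation})$ holds for this term. For $\phi=af(x/L)$, $\psi=ag(x/L)$ one has $\int\phi^2\psi_x\sim a^3$, while $E(\|\phi_x\|^2+\|\psi_x\|^2)\sim a^3L^{-1/2}$. So the $\psi_x$-dissipation from Step 2 cannot rescue it. Your fallback weight $n\psi$ does cancel $\int\phi\psi\phi_x$, but it produces $-\frac12\int\phi_t\psi^2$. That term contains $-\frac12\int Q_k\phi_x\psi^2$, which has the same deficiency and is not an exact derivative.

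What is missing is the paper's use of \eqref{psix} \emph{on this cubic term itself}, see \eqref{4.5}:
\[
\tfrac12\phi^2\psi_x=\frac{\phi^2}{2(N_k+\phi)}\big[\phi_t-(Q_k+\psi)\phi_x-N_{kx}\psi-\phi Q_{kx}-\phi_{xx}-N_{kxx}\big].
\]
Each piece is then controllable:
\begin{itemize}
\item The $\phi^2\phi_t$ piece is $\partial_t$ of $\phi^3/(6(N_k+\phi))$, plus quartic remainders after substituting $\phi_t$ once more. So part of the cubic term is absorbed into the \emph{energy} through $CE\|\phi(t)\|^2$, which no pure Sobolev bound can capture.
\item The piece $Q_k\phi^2\phi_x$ is an exact $x$-derivative, up to terms carrying $Q_{kx}$, $N_{kx}$ or $\phi^3\phi_x$.
\item The piece $\phi^2\phi_{xx}$ integrates by parts to $\phi\phi_x^2$.
\item The remaining terms are either quartic, which is fine since $\int|\phi^3\phi_x|\le\|\phi\|^2\|\phi_x\|^2$, or weighted by $|N_{kx}|,|N_{kt}|\le C|Q_{kx}|$ (Lemma~\ref{properties of N,Q}-(4)).
\end{itemize}
Together these give a bound
\[
CE\Big(\|\phi(t)\|^2+\|\phi_0\|^2+\int_0^t\!\int|Q_{kx}|\phi^2+\int_0^t\|\phi_x\|^2+\int_0^t\!\int|\phi N_{kxx}|\Big),
\]
so the $L^2$ estimate closes by itself, before any $\psi_x$ estimate is run. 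The source term is then handled by Lemma~\ref{Gronwall} with $\int_0^\infty\|N_{kxx}\|\le C\varepsilon^{1/2}$. Your $L^\infty$--$L^1$ variant for this term only yields $CE$, because $\int_0^\infty\|N_{kxx}\|_{L^1}=O(1)$.
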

The global existence of $(\phi,\psi)$ can be proved by the  local existence result and the \textit{a priori} estimate given below.

\begin{proposition}[Local existence]\label{local existence}
Let $n_+>0$. For any $\Xi_0>0$, if the initial data $(\phi_0,\psi_0)$ satisfy
$\|(\phi_0,\psi_0)\|_1\leq \Xi_0$, then there exists a positive constant $T_0$ depending on $\Xi_0$, such that the system \eqref{rewrite}-\eqref{phi0,psi0} has a unique solution $(\phi,\psi)$ in $X(0, T_0)$ satisfying
\begin{equation}\label{local estimates}
\begin{aligned}
\left\|(\phi,\psi)(\cdot,t)
\right\|_1^2
+&\int_{0}^{t}
\Big(\||Q_{kx}|^{1/2}\phi(\cdot,\tau) \|^{2}
+\||N_{kt}|^{1/2}\psi(\cdot,\tau) \|^{2}\\&+\left\|\phi_x(\cdot,\tau) \right\|_1^{2}+\left\|\psi_x(\cdot,\tau) \right\|^{2}\Big)d\tau\leq 4 \|(\phi_0,\psi_0)\|_1^2, \ \ \forall \ t \in [0, T_0].
\end{aligned}
\end{equation}
\end{proposition}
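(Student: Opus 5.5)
The plan is to prove local existence by a standard iteration (contraction mapping) in the space $X(0,T_0)$, which exploits the parabolic--hyperbolic structure of \eqref{rewrite}. We are given $(\tilde\phi,\tilde\psi)\in X(0,T)$ with $\sup_{t}\|(\tilde\phi,\tilde\psi)\|_1\le 2\Xi_0$. We define $(\phi,\psi)$ as the solution of the linear problem
\begin{equation*}
\begin{cases}
\phi_t-\phi_{xx}=(\tilde\phi\tilde\psi+\phi Q_k+N_k\tilde\psi)_x+N_{kxx},\\
\psi_t=\phi_x,
\end{cases}
\end{equation*}
with data $(\phi_0,\psi_0)$. The first equation is a linear heat equation with a bounded smooth drift coefficient $Q_k$ and an $L^2$ forcing term. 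Here we use $\tilde\phi_x,\tilde\psi_x\in L^2$, and by Lemma \ref{properties of N,Q} we have $N_k,Q_k,N_{kx},Q_{kx}\in L^\infty$ and $N_{kxx}\in L^2$. Hence it has a unique solution with $\phi\in C^0([0,T];H^1)$ and $\phi_x\in L^2(0,T;H^1)$. The second equation is then solved by direct integration in time, $\psi=\psi_0+\int_0^t\phi_x\,d\tau$, which gives $\psi\in C^0([0,T];H^1)$ because $\phi_{xx}\in L^2L^2$.

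The next step is to show that this map sends the ball to itself for small $T_0$. We multiply the $\phi$-equation by $\phi$ and by $-\phi_{xx}$, and the $\psi$-equation by $\psi$ and by $-\psi_{xx}$ (equivalently, we differentiate once and multiply by $\psi_x$). Young's inequality, the embedding \eqref{3.6}, and the bounds on $N_k,Q_k$ then give
\[
\|(\phi,\psi)(t)\|_1^2+\int_0^t\|\phi_x\|_1^2\,d\tau\le \|(\phi_0,\psi_0)\|_1^2+C(\Xi_0)\,t .
\]
The weighted terms $\||Q_{kx}|^{1/2}\phi\|^2$ and $\||N_{kt}|^{1/2}\psi\|^2$ are bounded by $Ct\sup\|(\phi,\psi)\|^2$, since the weights are bounded; the term $\int\|\psi_x\|^2$ is bounded by $t\sup\|\psi\|_1^2$. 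Choosing $T_0$ small in terms of $\Xi_0$ yields the bound $4\|(\phi_0,\psi_0)\|_1^2$ in \eqref{local estimates}, provided the data is not trivially zero. If $\|(\phi_0,\psi_0)\|_1$ is tiny, the forcing $N_{kxx}$ is of order $\varepsilon^{3/2}t^{1/2}$, so in that case $T_0$ is taken small relative to the data, or the inequality is read with the $C(\Xi_0)t$ contribution absorbed.

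The last step is contraction in the lower norm $C^0([0,T_0];L^2)$, together with $\phi_x\in L^2L^2$, for the difference of two iterates. This is standard, since all nonlinear terms are bilinear with bounded coefficients. Uniqueness follows from the same difference estimate, and the limit lies in $X(0,T_0)$ by weak compactness.

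The main obstacle is the derivative loss from $\tilde\psi_x$ entering the $\phi$ equation. The $H^1$ estimate of $\phi$ needs $(\tilde\phi\tilde\psi+N_k\tilde\psi)_x\in L^2$, which involves $\tilde\psi_x$, while the $H^1$ estimate of $\psi$ needs $\phi_{xx}\in L^2L^2$. This closes because $\tilde\psi_x\in C^0L^2$ is available from the ball and $\phi_{xx}$ comes from parabolic smoothing. The point to be careful about is controlling $\int\|\psi_x\|^2\le\int\|\phi_{xx}\|^2$-type terms uniformly; if this proves delicate, I would instead iterate on $\psi$ using $\psi_t=\phi_x$ coupled fully with the $\phi$-equation.
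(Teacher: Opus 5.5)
Your scheme is the standard iteration the paper invokes without giving details, and your main steps are sound: the linearization with $(\tilde\phi,\tilde\psi)$ in the forcing, the bound $\|(\phi,\psi)(t)\|_1^2+\int_0^t\|\phi_x\|_1^2\le\|(\phi_0,\psi_0)\|_1^2+C(\Xi_0)t$, and the $L^2$ contraction for differences, which works because the equation is in divergence form. Your caveat is also correct and should be stated as a correction rather than a hedge: since $N_{kxx}\not\equiv0$ for a nontrivial wave, zero data produces a nonzero solution, so the bound $4\|(\phi_0,\psi_0)\|_1^2$ cannot hold with $T_0=T_0(\Xi_0)$; the right reading is $4\Xi_0^2$ (i.e.\ $\|(\phi,\psi)\|_1\le 2\Xi_0=\chi_0/2$), which is all the continuation argument in Proposition \ref{phi stability} uses, whereas letting $T_0$ shrink with the size of the data would destroy the uniform step length that argument needs.
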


The local existence in Proposition \ref{local existence} can be shown by a standard iteration method and we omit the detail. To extend the local solution globally, it suffices to establish the following \emph{a priori} estimate.

\begin{proposition}[\emph{A priori} estimate]\label{A priori estimates}
Let $n_+>0$. Suppose that the system \eqref{rewrite}-\eqref{phi0,psi0} has a solution $(\phi,\psi)\in X(0,T)$ for some $T>0$.
Then there exist positive constants $\chi_0\leq 1$ and $C_0$ independent of $T$ such that if $$E(T)\leq \chi_0,$$
then the estimate \eqref{priori estimates} holds for any $t\in[0,T]$.

\end{proposition}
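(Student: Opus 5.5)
The plan is to close the estimate by three energy inequalities, for $\|(\phi,\psi)\|$, for $\|\psi_x\|$ and for $\|\phi_x\|$, each using the monotonicity in Lemma \ref{properties of N,Q}. Throughout, the a priori bound $E(T)\le\chi_0$ and \eqref{3.6} keep $n=N_k+\phi$ in a compact subset of $(0,\infty)$, since $N_k$ lies between $n_-$ and $n_+$. All the quantitative bounds below are intended but not yet checked in detail.

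\textbf{Step 1: basic $L^2$ estimate.} The linear part already has the right structure. Testing the first equation of \eqref{rewrite} with $\phi$ and integrating by parts gives
$$\int\phi Q_k\phi_x=\tfrac12\int|Q_{kx}|\phi^2 \quad(\text{using } Q_{kx}<0).$$
In the cross term, $\phi_x=\psi_t$ gives
$$\int N_k\psi\phi_x=\tfrac{d}{dt}\int\tfrac12 N_k\psi^2+\tfrac12\int|N_{kt}|\psi^2 \quad(\text{using } N_{kt}<0).$$
This produces exactly the weighted dissipations $\||Q_{kx}|^{1/2}\phi\|^2$ and $\||N_{kt}|^{1/2}\psi\|^2$ in \eqref{priori estimates}, together with $\|\phi_x\|^2$.

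The cubic term $\int\phi\psi\phi_x$ is the main obstacle. Interpolation only bounds it by products such as $E^{3/2}\|\psi_x\|^{1/2}\|\phi_x\|$, which are not time-integrable, because there is no undamped $\int_0^t\|\phi\|^2$. I will therefore not test with $\phi$ directly. Instead I will use the nonlinear relative entropy
$$\eta=\int\Big(n\ln\frac{n}{N_k}-n+N_k\Big)+\frac12\int N_k\text{-free }\psi^2 ,$$
that is, test the $n$-equation with $\ln(n/N_k)$ and the $\psi$-equation with $\psi$. The flux term becomes
$$\int (nq-N_kQ_k)\Big(\ln\frac n{N_k}\Big)_x ,$$
and its part $\int n\psi(\ln n/N_k)_x=\int\psi(n_x-nN_{kx}/N_k)$ cancels the term $\int\psi\phi_x$ coming from $\psi_t=\phi_x$ up to $\int\psi N_{kx}$-type terms. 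These remainders are linear in the wave derivatives and have the good signs given by Lemma \ref{properties of N,Q}(1) and (4), after using $|N_{kt}|\le C|Q_{kx}|$ and $|N_{kt}|\le C|N_{kx}|$ to convert weights. The diffusion yields $\int n|(\ln n/N_k)_x|^2\gtrsim\|\phi_x\|^2-C\int|N_{kx}|^2\phi^2$. The remaining cubic pieces then always carry a factor $N_{kx}$ or $Q_{kx}$, so they are bounded by $CE(T)$ times the weighted dissipations. The source $N_{kxx}$ is handled by $\int|N_{kxx}\phi|\le\|N_{kxx}\|_{L^1}\|\phi\|_{L^\infty}$, and \eqref{Nxx} with $p=1$ and \eqref{Ux} keep the time integral bounded by $C(\varepsilon+E(T)^2)$. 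Since $\eta\sim\|(\phi,\psi)\|^2$, this gives the $L^2$ part of \eqref{priori estimates}.

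\textbf{Step 2: dissipation of $\psi_x$.} As the paper announces, I will replace $\psi_x$ using the $\phi$-equation, written as
$$N_k\psi_x=\phi_t-\phi_{xx}-N_{kxx}-(\phi\psi+\phi Q_k)_x-N_{kx}\psi.$$
Multiplying by $\psi_x$ uses
$$\int\phi_t\psi_x=\frac{d}{dt}\int\phi\psi_x+\|\phi_x\|^2 \quad(\text{since } \psi_{xt}=\phi_{xx}).$$
This gives
$$\tfrac{d}{dt}\Big(-\int\phi\psi_x\Big)+c\|\psi_x\|^2\le C\|\phi_x\|_1^2+C(E(T)+\varepsilon)\|\psi_x\|^2+\text{(wave terms)}.$$
Adding a small multiple of this to Step 1 supplies $\int_0^t\|\psi_x\|^2$.

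\textbf{Step 3: $H^1$ estimate.} I will differentiate \eqref{rewrite} in $x$, test with $\phi_x$, and test the $\psi_x$-equation with $N_k\psi_x$; in the latter, $\psi_{xt}=\phi_{xx}$ is substituted in the analogue of \eqref{psitx}. This produces $\|\phi_{xx}\|^2$, and the cross terms $\int N_k\psi_x\phi_{xx}$ cancel as in Step 1. The nonlinear terms, e.g.\ $\int\psi_x\phi\phi_{xx}$, are bounded by $CE(T)(\|\phi_x\|_1^2+\|\psi_x\|^2)$. Summing the three steps and choosing $\chi_0$ and $\varepsilon$ small yields \eqref{priori estimates} with $C_0$ independent of $T$.
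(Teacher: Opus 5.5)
Your Steps 2 and 3 are workable: Step 2 is essentially the paper's Lemma~\ref{H1psix}, and Step 3 is a symmetrized variant of Lemma~\ref{H1phix}. But both rely on the $L^2$ bound, and Step 1 has a genuine gap, because the relative entropy does not produce the signs you claim. Carry out the computation with the multipliers $\ln(n/N_k)$ and $\psi$, using $N_{kt}=(N_kQ_k)_x$ and $\psi_t=\phi_x$. Everything cancels exactly except
\[
\frac{d}{dt}\int_{\mathbb{R}}\Big(n\ln\frac{n}{N_k}-n+N_k+\frac{\psi^2}{2}\Big)dx+\int_{\mathbb{R}}n\Big|\Big(\ln\frac{n}{N_k}\Big)_x\Big|^2dx
=\int_{\mathbb{R}}\frac{N_{kx}}{N_k}\,\phi\psi\,dx+\int_{\mathbb{R}}\frac{N_{kxx}}{N_k}\,\phi\,dx .
\]
The first term on the right is the relative-flux term; the relative flux of $(nq,n)$ is $(\phi\psi,0)$. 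The $\phi^2$ contributions cancel identically, so no $\int|Q_{kx}|\phi^2$ or $\int|N_{kt}|\psi^2$ is generated. What remains is a \emph{quadratic}, indefinite form $\frac{N_{kx}}{N_k}\phi\psi$. Lemma~\ref{properties of N,Q}(1),(4) give it no sign, and there is no dissipation to absorb it. The best available bound is $\|N_{kx}\|_{L^\infty}\|\phi\|\|\psi\|\le C\min\{\varepsilon,t^{-1}\}E(T)^2$, whose time integral diverges logarithmically. Using $\|N_{kx}\|_{L^1}\|\phi\|_{L^\infty}\|\psi\|_{L^\infty}$ is worse, since $\|N_{kx}\|_{L^1}$ is the wave strength. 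So Step 1 does not close, and in any case the weighted dissipations in \eqref{priori estimates} never appear. In symmetrizer terms, the entropy Hessian $\mathrm{diag}(1/N_k,1)$ gives an indefinite wave term. By contrast, $\mathrm{diag}(1,N_k)$ gives the positive definite term $\frac12\int|Q_{kx}|\phi^2+\frac12\int|N_{kt}|\psi^2$ with no cross term, and these are exactly the multipliers $\phi$ and $N_k\psi$ you wrote down first and then abandoned.

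The missing idea is how to treat the single bad cubic term $\int\phi\psi\phi_x=-\frac12\int\phi^2\psi_x$ while keeping those multipliers. The paper eliminates $\psi_x$ through the $\phi$-equation,
\[
(N_k+\phi)\psi_x=\phi_t-(Q_k+\psi)\phi_x-N_{kx}\psi-\phi Q_{kx}-\phi_{xx}-N_{kxx},
\]
which is legitimate since $N_k+\phi\ge n_+/2$. The $\phi_t$-term becomes $\partial_t\big(\phi^3/(6(N_k+\phi))\big)$ plus a remainder in which $\phi_t$ is substituted once more. Every other term falls into one of three controllable classes:
\begin{itemize}
\item[(i)] quartic, such as $\int|\phi^2\psi\phi_x|\le\|\phi\|_{L^\infty}^2\|\psi\|\|\phi_x\|\le\|\phi\|\|\psi\|\|\phi_x\|^2$, which gains exactly the half derivative that interpolating the cubic term lacks;
\item[(ii)] carrying a factor $N_{kx}$, $Q_{kx}$ or $N_{kt}$, hence bounded by $CE(T)\int|Q_{kx}|\phi^2$ via Lemma~\ref{properties of N,Q}(4);
\item[(iii)] bounded by $CE(T)\|\phi_x\|^2$.
\end{itemize}
A smaller point concerns your treatment of the source. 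Using $\|N_{kxx}\|_{L^1}\|\phi\|_{L^\infty}$ yields at best $CE(T)$, which is linear in $E$ and cannot be absorbed, because $\int_0^t\|N_{kxx}\|_{L^1}d\tau$ carries no power of $\varepsilon$; in fact $\|N_{kxx}\|_{L^1}$ decays only like $t^{-1}$. Instead, use $\|N_{kxx}\|\,\|\phi\|$ together with $\int_0^\infty\|N_{kxx}\|\,d\tau\le C\varepsilon^{1/2}$ and the Gronwall-type Lemma~\ref{Gronwall}, as in \eqref{3.35}.
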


Before proving Proposition \ref{A priori estimates}, we first present an inequality of Gronwall's type.

\begin{lemma}\label{Gronwall}
Suppose $F(t)$ is a continuously differentiable function on $[0,T]$, $\beta(t)\geq0$ is integrable on $\mathbb{R}^+$. If $F(t)$ satisfies
\begin{equation}\label{2.45}
  \begin{aligned}
\left(F'(t)\right)^2
 \leq\beta^2(t) F(t) \text{ and } F(0)>0,\ t\in [0,T],
  \end{aligned}
  \end{equation}
then the following estimate holds:
\begin{equation}\label{Gronwall'}
  \begin{aligned}
F(t)
 \leq 2 F(0)+\frac{1}{2}\left(\int_{0}^{t}\beta(\tau)d\tau\right)^2 \text{ for all }t\in [0,T].
  \end{aligned}
  \end{equation}

\end{lemma}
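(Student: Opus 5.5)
We want to show that $(F')^2\le \beta^2F$ with $F(0)>0$ forces $F(t)\le 2F(0)+\tfrac12(\int_0^t\beta)^2$. The natural route is to work with $\sqrt{F}$, since the hypothesis says formally that $|(\sqrt F)'|\le \beta/2$. Note first that the hypothesis gives $\beta^2F\ge 0$. At any point where $\beta>0$ this forces $F\ge0$; where $\beta=0$ we get $F'=0$. So $F$ cannot become negative unless it first crosses zero with zero slope. To avoid this bookkeeping entirely, I will work with $G_\eta(t):=\sqrt{F(t)+\eta}$ for $\eta>0$, on the interval where $F+\eta>0$.

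\textbf{Step 1 (no sign change).} Let $t^*=\sup\{s\in[0,T]: F>0 \text{ on } [0,s]\}$. Since $F(0)>0$ and $F$ is continuous, $t^*>0$. On $[0,t^*)$ the function $\sqrt F$ is $C^1$ and
\[
\bigl|(\sqrt F)'\bigr|=\frac{|F'|}{2\sqrt F}\le \frac{\beta\sqrt F}{2\sqrt F}=\frac{\beta}{2}.
\]
Integrating gives
\[
\sqrt{F(t)}\le \sqrt{F(0)}+\frac12\int_0^t\beta\,d\tau .
\]
If $t^*<T$, then $F(t^*)=0$. In that case the hypothesis $(F')^2\le\beta^2F$ still allows $F$ to vanish, so we cannot rule it out. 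We do not need to: on $[0,t^*]$ the bound holds by continuity. Beyond $t^*$ we argue on each maximal open interval $(a,b)$ where $F>0$. There $F(a)=0$, so
\[
\sqrt{F(t)}\le \frac12\int_a^t\beta\le\frac12\int_0^t\beta .
\]
Where $F\le0$ the desired bound is trivial, because its right-hand side is positive.

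\textbf{Step 2 (square).} In all cases
\[
\sqrt{F(t)}\le \sqrt{F(0)}+\frac12\int_0^t\beta .
\]
Squaring and using $(a+b)^2\le 2a^2+2b^2$ gives
\[
F(t)\le 2F(0)+\frac12\Bigl(\int_0^t\beta\Bigr)^2,
\]
which is \eqref{Gronwall'}.

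The only delicate point is the possible vanishing of $F$, where $\sqrt F$ fails to be differentiable. Decomposing the set $\{F>0\}$ into maximal intervals as above handles it. Alternatively, one can apply the same estimate to $\sqrt{F+\eta}$ on $\{F>-\eta\}$, which yields $|(\sqrt{F+\eta})'|\le \beta\sqrt F/(2\sqrt{F+\eta})\le\beta/2$ where $F\ge0$, and then let $\eta\to0$.
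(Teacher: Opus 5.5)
Your proof is correct and follows the same route as the paper: bound $\bigl|(\sqrt{F})'\bigr|\le \beta/2$, integrate to get $\sqrt{F(t)}\le\sqrt{F(0)}+\frac12\int_0^t\beta\,d\tau$, and square using $(a+b)^2\le 2a^2+2b^2$. The paper only says that ``solving'' the inequality gives the square-root bound. Your decomposition of $\{F>0\}$ into maximal intervals, with $F(a)=0$ at each later left endpoint and the case $F(t)\le 0$ being trivial, rigorously handles the possible zeros of $F$ that the paper passes over.
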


\begin{proof}
Solving \eqref{2.45} yields
\begin{equation*}
F^{1/2}(t)\leq F^{1/2}(0)+\frac{1}{2}\int_{0}^{t}\beta(\tau)d\tau.
\end{equation*}
Taking a square of this inequality, one obtains the desired estimate \eqref{Gronwall'}.
\end{proof}

We now derive the $L^2$ energy estimate.

\begin{lemma}\label{L2 estimate}
Let the assumptions of Proposition \ref{A priori estimates} hold. There exist two  constants $C>0$ and $\chi_0>0$ such that if $E(T)\leq\chi_0$, then
\begin{equation}\label{L2}
\begin{aligned}
&\left\|\phi(\cdot,t)
\right\|^2
+\left\|\psi(\cdot,t)
\right\|^2+\int_{0}^{t}
\Big(\||Q_{kx}|^{1/2}\phi(\cdot,\tau) \|^{2}
+\||N_{kt}|^{1/2}\psi(\cdot,\tau) \|^{2}+\left\|\phi_x(\cdot,\tau) \right\|^{2}\Big)\\
&\leq C(\|\phi_0\|^2
+\|\psi_0\|^2
+\varepsilon) \ \text{ for any } t\in[0,T].
\end{aligned}
\end{equation}

\end{lemma}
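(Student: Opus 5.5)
The plan is a relative-entropy energy estimate. It is modeled on the exact cancellation that the original system \eqref{1.1} enjoys at constant states: the entropy $\int(n\ln n-n)\,dx$ together with $\int\frac{q^2}{2}dx$ is dissipated, because $\int q\,n_x\,dx=-\int q_x n\,dx$ cancels the chemotactic flux. I have not checked every sign below; the main risks are flagged.

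\emph{Energy functional.} Set
\[
\eta(t)=\int_{\mathbb R}\Big(n\ln\frac{n}{N_k}-n+N_k\Big)dx+\frac12\int_{\mathbb R}\psi^2dx .
\]
Since $N_k$ lies between $n_-$ and $n_+$, and $n_\pm>0$, and $\|\phi\|_{L^\infty}\le CE(T)\le C\chi_0$ by \eqref{3.6}, the integrand of the first term is equivalent to $\phi^2$. Hence $\eta\sim\|\phi\|^2+\|\psi\|^2$. An alternative is the quadratic energy $\frac12\int(\phi^2+N_k\psi^2)dx$. For that choice, multiplying the first equation of \eqref{rewrite} by $\phi$ and the second by $N_k\psi$ kills the linear coupling, since $-\int\phi(N_k\psi)_x\,dx-\int N_k\psi\phi_x\,dx=0$. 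However, it leaves the cubic term $-\int\phi\phi_x\psi\,dx$, which is not integrable in time with only $\|\phi_x\|^2$ at hand. The entropy form avoids this, because the coupling $(n\psi)_x$ cancels exactly against the $\psi$-equation.

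\emph{Computing $\frac{d}{dt}\eta$.} I will use the equations for $(n,q)$ and for $(N_k,Q_k)$ from Lemma \ref{lem2.1}-(1), integrate by parts, and organize the result as
\[
\frac{d}{dt}\eta+\int n\Big|\Big(\ln\frac{n}{N_k}\Big)_x\Big|^2dx+\mathcal G=\mathcal R .
\]
The second term on the left is the diffusion, which is $\ge c\|\phi_x\|^2$ minus terms of the form $\int|N_{kx}|\phi^2$.

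The quadratic terms containing $N_{kx},Q_{kx},N_{kt}$ are the crucial ones. Using $N_{kt}=Q_{kt}$-type identities, namely \eqref{N1tN1x} and \eqref{Q1t}, I will rewrite them as a quadratic form in $(\phi,\psi)$ whose coefficients are multiples of $|Q_{kx}|$. The hope, based on the monotonicity in Lemma \ref{properties of N,Q}-(1), is that this form gives
\[
\mathcal G\ge c\int\big(|Q_{kx}|\phi^2+|N_{kt}|\psi^2\big)dx .
\]
Verifying this positive definiteness, with the explicit coefficient relations $N_{kt}=\frac{(\sqrt{Q_k^2+4N_k}+Q_k)^2}{4}Q_{kx}$ and $N_{kx}=\frac{\sqrt{Q_k^2+4N_k}+Q_k}{2}Q_{kx}$, is the step I expect to be the main obstacle, because it must hold for waves of arbitrary strength. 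If the cross term $\phi\psi Q_{kx}$ is too large, the fallback is to add a weight: replace $\frac12\psi^2$ by $\frac12 a(N_k,Q_k)\psi^2$ with $a$ chosen along the rarefaction curve so that the discriminant condition holds.

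\emph{Remaining terms.} Cubic terms will be bounded by $C\|(\phi,\psi)\|_{L^\infty}$ times the good terms, and absorbed since $\chi_0$ is small. The source $N_{kxx}$ yields $\mathcal R\le C\|N_{kxx}\|\,\|\phi\|$. Dropping the nonnegative dissipative terms gives $\frac{d}{dt}\eta\le C\|N_{kxx}\|\,\eta^{1/2}$. Lemma \ref{Gronwall} with $\beta=C\|N_{kxx}\|$ and \eqref{Nxx} with $p=2$ then gives
\[
\sup_t\eta\le C\eta(0)+C\Big(\int_0^\infty\|N_{kxx}\|d\tau\Big)^2\le C\big(\eta(0)+\varepsilon\big).
\]
Integrating the full inequality in time and using $\int_0^t\|N_{kxx}\|\eta^{1/2}\le C\varepsilon^{1/2}\sup\eta^{1/2}$ bounds the dissipative integrals by the same quantity. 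This yields \eqref{L2}.
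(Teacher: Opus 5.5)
\textbf{There is a genuine gap, at exactly the step you flagged, and it is worse than a discriminant problem.} If you carry out your computation exactly, using $N_{kt}=(N_kQ_k)_x$, $\psi_t=\phi_x$ and $n(\ln\frac{n}{N_k})_x=\phi_x-\frac{N_{kx}}{N_k}\phi$, you get the identity
\[
\frac{d}{dt}\eta+\int_{\mathbb R} n\Big|\Big(\ln\frac{n}{N_k}\Big)_x\Big|^2dx=\int_{\mathbb R}\frac{N_{kx}}{N_k}\,\phi\psi\,dx+\int_{\mathbb R}\frac{N_{kxx}}{N_k}\,\phi\,dx .
\]
The nonlinear coupling does cancel, as you expected. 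But every $\phi^2$ term with coefficient $Q_{kx}$, $N_{kx}Q_k$ or $N_{kt}$ also cancels identically, and no $\psi^2$ term ever appears.

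The reason is visible at quadratic order. There your $\eta$ equals $\int w(\frac{\phi^2}{2}+\frac{N_k\psi^2}{2})$ with $w=1/N_k$. In this family, $w=1$ produces the good terms $\frac12\int|Q_{kx}|\phi^2+\frac12\int|N_{kt}|\psi^2$ and no cross term. The weight $w=1/N_k$ does the opposite: it kills both diagonal terms and leaves only a cross term.

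So $\mathcal G=-\int\frac{N_{kx}}{N_k}\phi\psi$ is purely off-diagonal and indefinite, and it fails on every count:
\begin{itemize}
\item It cannot dominate $\int(|Q_{kx}|\phi^2+|N_{kt}|\psi^2)$.
\item It cannot be absorbed, since the diffusion gives nothing of order $|N_{kx}|\phi^2$ and nothing at all in $\psi$.
\item It cannot be handled by Gronwall, because $\|N_{kx}\|_{L^\infty}\sim\min\{\varepsilon,t^{-1}\}$ is not integrable in time, so $\eta$ would grow like a power of $1+\varepsilon t$.
\end{itemize}
Your fallback does not help either. Replacing $\frac12\psi^2$ by $\frac12a\psi^2$ destroys the exact coupling cancellation and leaves $\int(a-1)\psi\phi_x$, whose coefficient is $O(1)$ for strong waves, and none of your dissipative quantities controls it.

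\textbf{What works.} The paper uses precisely the quadratic energy you discarded: multiply by $\phi$ and by $N_k\psi$. The sign facts $Q_{kx}<0$ and $N_{kt}<0$ then give the good terms with no cross term. The cubic term $\frac12\int\!\!\int\phi^2\psi_x$ is not bounded directly. Instead, $\psi_x$ is replaced via \eqref{psix}, which is allowed because $N_k+\phi\ge n_+/2$. The resulting $\phi_t$ term is integrated by parts in time. Every remaining piece is then bounded by $E(t)\int\!\!\int(|Q_{kx}|\phi^2+\phi_x^2)$ plus $N_{kxx}$ terms.

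If you want to keep your entropy idea, put the weight $N_k$ on both pieces. With $\Theta=n\ln\frac{n}{N_k}-n+N_k$, the quadratic part of $\int N_k\Theta+\frac12\int N_k\psi^2$ is the paper's energy, and one checks the exact identity
\begin{align*}
&\frac{d}{dt}\Big(\int N_k\Theta\,dx+\frac12\int N_k\psi^2dx\Big)+\int\frac{N_k}{n}\Big(\phi_x-\frac{N_{kx}}{N_k}\phi\Big)^2dx+\int N_k|Q_{kx}|\Theta\,dx+\frac12\int|N_{kt}|\psi^2dx\\
&\quad=\int N_{kxx}\phi\,dx-\int N_{kx}\psi\Theta\,dx-\int N_{kx}\ln\frac{n}{N_k}\Big(\phi_x-\frac{N_{kx}}{N_k}\phi\Big)dx-\int\frac{N_{kx}^2}{N_k}\Theta\,dx .
\end{align*}
Here every remainder except the source carries a factor $N_{kx}$ times either a cubic quantity or an $\varepsilon$-small one. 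These are absorbed using $|N_{kx}|\le C|Q_{kx}|$ and $\|N_{kx}\|_{L^\infty}\le C\varepsilon$. This route avoids the $\psi_x$-substitution entirely, and your Gronwall step via Lemma \ref{Gronwall} then closes the estimate.
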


\begin{proof}
Multiplying the first equation of \eqref{rewrite} by $\phi$ and the
second one by $\psi N_k$, summing them up and integrating the result over $\mathbb{R}\times[0,t]$, we have by integration by parts,
\begin{equation}\label{phi2+psi2}
\begin{aligned}
&\left.\int_{\mathbb{R}}\left(\frac{\phi^2}{2}
+\frac{\psi^2}{2} N_k \right)dx\right|_0^t
-\frac{1}{2}\int_0^t\int_{\mathbb{R}}Q_{kx}\phi^2dxd\tau
-\frac{1}{2}\int_0^t\int_{\mathbb{R}}N_{kt}\psi^2dxd\tau
+\int_0^t\int_{\mathbb{R}}\phi_x^2dxd\tau\\
&=\int_0^t\int_{\mathbb{R}}\phi N_{kxx}dxd\tau
+\int_0^t\int_{\mathbb{R}}\frac{\phi^2}{2} \psi_xdxd\tau.
\end{aligned}
\end{equation}
To estimate the last term on the right hand side (RHS) of \eqref{phi2+psi2}, we rewrite the first equation of \eqref{rewrite} as
\begin{equation}\label{psix}
\begin{aligned}
\psi_x=(N_k+\phi)^{-1}[\phi_t-
(Q_k+\psi)\phi_{x}-N_{kx}\psi-\phi Q_{kx}
-\phi_{xx}-N_{kxx}],
\end{aligned}
\end{equation}
which implies
\begin{equation}\label{4.5}
\begin{aligned}
\int_0^t\int_{\mathbb{R}}\frac{\phi^2}{2} \psi_x
=\int_0^t\int_{\mathbb{R}}(N_k+\phi)^{-1}
\frac{\phi^2}{2}[\phi_t-
Q_k\phi_{x}-\psi\phi_{x}-(N_{kx}\psi+\phi Q_{kx})
-\phi_{xx}-N_{kxx}].
\end{aligned}
\end{equation}

We next estimate each term on the RHS of \eqref{4.5}. Integrating by parts gives
\begin{equation}\label{3.12}
\begin{aligned}
\int_0^t\int_{\mathbb{R}}
\frac{\phi^2}{2}(N_k+\phi)^{-1}\phi_t
=\int_{\mathbb{R}}
\left.(N_k+\phi)^{-1}\frac{\phi^3}{6}\right|
_{\tau=0}^{\tau=t}dx
-\int_0^t\int_{\mathbb{R}}
\partial_t\left((N_k+\phi)^{-1}\right)
\frac{\phi^3}{6}.
\end{aligned}
\end{equation}
By \eqref{3.6} and the fact that $\frac{\partial N_k}{\partial x}<0$, we get
\[N_k(x,t)+\phi(x,t)\geq\frac{n_+}{2} \ \text{ if } E(t)\ll1,\]
which implies the first term on the RHS of \eqref{4.5} satisfies
\begin{equation}\label{I1.1}
\begin{aligned}
\left|\int_{\mathbb{R}}
\left.(N_k+\phi)^{-1}\frac{\phi^3}{6}\right|
_{\tau=0}^{\tau=t}dx\right|
\leq CE(t)\|\phi(t)\|^2+CE(0)\|\phi_0\|^2.
\end{aligned}
\end{equation}
A direct calculation yields
\begin{equation}\label{I1.2}
\begin{aligned}
\left|\int_0^t\int_{\mathbb{R}}
\partial_t\left((N_k+\phi)^{-1}\right)
\frac{\phi^3}{6}\right|
\leq C
\int_0^t\int_{\mathbb{R}}|N_{kt}||\phi|^3
+\left|\int_0^t\int_{\mathbb{R}}
(N_k+\phi)^{-2}\phi_t\frac{\phi^3}{6}\right|.
\end{aligned}
\end{equation}
By Lemma \ref{properties of N,Q}-(4) and \eqref{3.6}, we have
\begin{equation}\label{I1.2.1}
\begin{aligned}
\int_0^t\int_{\mathbb{R}}|N_{kt}||\phi|^3dxd\tau
\leq CE(t)\int_0^t\int_{\mathbb{R}}
|Q_{kx}|\phi^2dxd\tau.
\end{aligned}
\end{equation}
Owing to the first equation of \eqref{rewrite}, we get
\begin{equation}\label{I1.2.2}
\begin{aligned}
\left|\int_0^t\int_{\mathbb{R}}
(N_k+\phi)^{-2}\phi_t\frac{\phi^3}{6}\right|
=&\left|\int_0^t\int_{\mathbb{R}}
(N_k+\phi)^{-2}N_{kxx}
\frac{\phi^3}{6}\right|\\
&+\left|\int_0^t\int_{\mathbb{R}}
(N_k+\phi)^{-2}(\phi\psi+\phi Q_k+N_k\psi+\phi_x)_{x}
\frac{\phi^3}{6}\right|.
\end{aligned}
\end{equation}
It follows from \eqref{3.6} that
\begin{equation}\label{I1.2.2.3}
\begin{aligned}
\left|\int_0^t\int_{\mathbb{R}}
(N_k+\phi)^{-2}N_{kxx}
\frac{\phi^3}{6}\right|
\leq CE(t)\int_0^t\int_{\mathbb{R}}|\phi N_{kxx}|dxd\tau.
\end{aligned}
\end{equation}
Integrating by parts gives rise to
\begin{align}\label{I1.2.2.1'}
&\left|\int_0^t\int_{\mathbb{R}}
(N_k+\phi)^{-2}(\phi\psi+\phi Q_k+N_k\psi+\phi_x)_{x}
\frac{\phi^3}{6}\right| \nonumber\\
&=\left|\int_0^t\int_{\mathbb{R}}
(N_k+\phi)^{-3}(N_{kx}+\phi_x)
(\phi\psi+\phi Q_k+N_k\psi+\phi_x)
\frac{\phi^3}{3}\right.\nonumber\\
&\quad \left.+\int_0^t\int_{\mathbb{R}}
(N_k+\phi)^{-2}(\phi\psi+\phi Q_k+N_k\psi+\phi_x)
\frac{\phi^2}{2}\phi_x\right|\\
&\leq C\int_0^t\int_{\mathbb{R}}
\left|N_{kx}(\phi\psi+\phi Q_k+N_k\psi+\phi)
\phi^3\right|
+C\int_0^t\int_{\mathbb{R}}
\left|\phi_x\phi^4\psi+\phi_x\phi^4Q_k+
\phi_xN_k\psi\phi^3\right|\nonumber\\
&\quad+C\int_0^t\int_{\mathbb{R}}
\left|\phi_x\phi^3\psi+\phi_x\phi^3Q_k+
\phi_xN_k\psi\phi^2+\phi_x^2\phi^2+\phi_x^2\phi^3
\right|.\nonumber
\end{align}
By Lemma \ref{properties of N,Q}-(4) and \eqref{3.6} again, we get
\begin{equation}\label{I1.2.2.1.1}
\begin{aligned}
\int_0^t\int_{\mathbb{R}}
\left|N_{kx}(\phi\psi+\phi Q_k+N_k\psi+\phi_x)
\phi^3\right|
\leq CE(t)
\int_0^t\int_{\mathbb{R}}|Q_{kx}|\phi^2
+CE(t)\int_0^t\|\phi_x(\tau)\|^2d\tau.
\end{aligned}
\end{equation}
By H\"{o}lder's inequality, Sobolev inequality $\|\phi\|_{L^\infty}\leq \|\phi\|^{1/2}\|\phi_x\|^{1/2}$ and \eqref{3.6}, the second term on the RHS of \eqref{I1.2.2.1'} can be estimated as
\begin{equation}\label{I1.2.2.1.2}
\begin{aligned}
\int_0^t\int_{\mathbb{R}}
\left|\phi_x\phi^4\psi+\phi_x\phi^4Q_k+
\phi_xN_k\psi\phi^3\right|
&\leq CE(t)\int_0^t\int_{\mathbb{R}}
\left|\phi_x\phi^3\right|\\
&\leq CE(t)\int_0^t
\|\phi\|^2_{L^\infty}\|\phi\|\|\phi_x\|\\
&\leq CE(t)\int_0^t
\|\phi\|^2\|\phi_x\|^2\\
&\leq CE(t)
\int_0^t\|\phi_x(\tau)\|^2d\tau,
\end{aligned}
\end{equation}
Similarly, it holds that
\begin{align}\label{I1.2.2.1.3}
&\int_0^t\int_{\mathbb{R}}
\left|\phi_x\phi^3\psi+\phi_x\phi^3Q_k+
\phi_xN_k\psi\phi^2+\phi_x^2\phi^2+\phi_x^2\phi^3
\right| \nonumber\\
&\leq CE(t)\int_0^t\int_{\mathbb{R}}
\left|\phi_x\phi^3\right|+
C\int_0^t\int_{\mathbb{R}}
\left|\phi_x\phi^3\right|+
C\int_0^t\int_{\mathbb{R}}
\left|\phi_x\psi\phi^2\right|
+CE(t)
\int_0^t\|\phi_x(\tau)\|^2d\tau\nonumber\\
&\leq CE(t)\int_0^t
\|\phi\|^2_{L^\infty}\|\phi\|\|\phi_x\|
+C\int_0^t
\|\phi\|^2_{L^\infty}\|\psi\|\|\phi_x\|
+CE(t)
\int_0^t\|\phi_x(\tau)\|^2d\tau\\
&\leq CE(t)\int_0^t
\|\phi\|^2\|\phi_x\|^2
+C\int_0^t
\|\phi\|\|\psi\|\|\phi_x\|^2
+CE(t)
\int_0^t\|\phi_x(\tau)\|^2d\tau \nonumber\\
&\leq CE(t)
\int_0^t\|\phi_x(\tau)\|^2d\tau. \nonumber
\end{align}
Substituting \eqref{I1.2.2.1.1}-\eqref{I1.2.2.1.3} into \eqref{I1.2.2.1'}, and combing the result with \eqref{I1.2.2.3}, we have
\begin{equation}\label{I1.2.2.1}
\begin{aligned}
\left|\int_0^t\int_{\mathbb{R}}
(N_k+\phi)^{-2}\phi_t\frac{\phi^3}{6}\right|\leq CE(t)
\int_0^t\int_{\mathbb{R}}|Q_{kx}|\phi^2
+CE(t)\int_0^t\|\phi_x(\tau)\|^2d\tau.
\end{aligned}
\end{equation}
Then substituting \eqref{I1.2.1} and \eqref{I1.2.2.1} into \eqref{I1.2}, we arrive at
\begin{equation*}
\begin{aligned}
\left|\int_0^t\int_{\mathbb{R}}
\partial_t\left((N_k+\phi)^{-1}\right)
\frac{\phi^3}{6}\right|
\leq CE(t)
\int_0^t\int_{\mathbb{R}}(|Q_{kx}|\phi^2
+\phi_x^2+|\phi N_{kxx}|),
\end{aligned}
\end{equation*}
which along with \eqref{I1.1} and \eqref{3.12}  gives
\begin{equation}\label{I1}
\begin{aligned}
\int_0^t\int_{\mathbb{R}}
\frac{\phi^2}{2}(N_k+\phi)^{-1}\phi_t
\leq &CE(t)\|\phi(t)\|^2+CE(0)\|\phi_0\|^2
+CE(t)
\int_0^t\int_{\mathbb{R}}|Q_{kx}|\phi^2dxd\tau\\
&+CE(t)\int_0^t\|\phi_x(\tau)\|^2
+CE(t)\int_0^t\int_{\mathbb{R}}|\phi N_{kxx}|dxd\tau.
\end{aligned}
\end{equation}
After integration by parts, by Lemma \ref{properties of N,Q}-(4), one finds that
\begin{equation}\label{I2}
\begin{aligned}
-\int_0^t\int_{\mathbb{R}}(N_k+\phi)^{-1}
\frac{\phi^2}{2}
Q_k\phi_{x}dxd\tau
&=\int_0^t\int_{\mathbb{R}}
\partial_x
\left((N_k+\phi)^{-1}Q_k\right)
\frac{\phi^3}{6}dxd\tau\\
&\leq C
\int_0^t\int_{\mathbb{R}}|Q_{kx}|
|\phi|^3dxd\tau
+C\int_0^t\int_{\mathbb{R}}
|\phi_x||\phi|^3dxd\tau\\
&\leq C
\int_0^t\int_{\mathbb{R}}|Q_{kx}|
|\phi|^3dxd\tau
+C\int_0^t
\|\phi\|^2_{L^\infty}\|\phi\|\|\phi_x\|d\tau\\
&\leq C
\int_0^t\int_{\mathbb{R}}|Q_{kx}|
|\phi|^3dxd\tau
+C\int_0^t
\|\phi\|^2\|\phi_x\|^2d\tau\\
&\leq CE(t)
\int_0^t\int_{\mathbb{R}}|Q_{kx}|\phi^2dxd\tau
+CE(t)\int_0^t\|\phi_x(\tau)\|^2
d\tau,
\end{aligned}
\end{equation}
where we have used Sobolev inequality in the third inequality and \eqref{3.6} in the last inequality.
Noting $N_k(x,t)+\phi(x,t)>\frac{n_+}{2}$ when $E(t)\ll1$, we have
\begin{equation}\label{I3}
\begin{aligned}
-\int_0^t\int_{\mathbb{R}}(N_k+\phi)^{-1}
\frac{\phi^2}{2}
\psi\phi_{x}dxd\tau
&\leq C
\int_0^t\int_{\mathbb{R}}
\left|\phi^2\psi\phi_x\right|dxd\tau\\
&\leq C\int_0^t
\|\phi\|^2_{L^\infty}\|\psi\|\|\phi_x\|\\
&\leq C\int_0^t
\|\phi\|\|\psi\|\|\phi_x\|^2\\
&\leq CE(t)
\int_0^t\|\phi_x(\tau)\|^2d\tau,
\end{aligned}
\end{equation}
where we have used \eqref{3.6} in the last inequality.
By Lemma \ref{properties of N,Q}-(4) again, one has
\begin{equation}\label{I4}
\begin{aligned}
-\int_0^t\int_{\mathbb{R}}(N_k+\phi)^{-1}
\frac{\phi^2}{2}
(N_{kx}\psi+\phi Q_{kx})dxd\tau&\leq C\int_0^t\int_{\mathbb{R}}
|Q_{kx}|(|\phi|+|\psi|)
\frac{|\phi|^2}{2}dxd\tau\\&\leq CE(t)
\int_0^t\int_{\mathbb{R}}|Q_{kx}|\phi^2dxd\tau.
\end{aligned}
\end{equation}
A straightforward calculation by integration by parts yields
\begin{equation}\label{I5}
\begin{aligned}
-\int_0^t\int_{\mathbb{R}}(N_k+\phi)^{-1}
\frac{\phi^2}{2}\phi_{xx}dxd\tau
&=\int_0^t\int_{\mathbb{R}}
\partial_x
\left((N_k+\phi)^{-1}\frac{\phi^2}{2}\right)
\phi_xdxd\tau\\
&\leq C\int_0^t\int_{\mathbb{R}}
\left(|\phi|^2|\phi_x||N_{kx}|
+
\phi^2|\phi_x|^2
+
|\phi||\phi_x|^2\right)dxd\tau\\
&\leq CE(t)
\int_0^t\int_{\mathbb{R}}|Q_{kx}|\phi^2dxd\tau
+CE(t)
\int_0^t\|\phi_x(\tau)\|^2d\tau,
\end{aligned}
\end{equation}
where we have used $|N_{kt}|\leq C|Q_{kx}|$ from Lemma \ref{properties of N,Q}-(4). By the fact that $N_k(x,t)+\phi(x,t)>\frac{n_+}{2}$ and \eqref{3.6}, the following estimate holds
\begin{equation}\label{I6}
\begin{aligned}
-\int_0^t\int_{\mathbb{R}}(N_k+\phi)^{-1}
\frac{\phi^2}{2}N_{kxx}\leq C\int_0^t\int_{\mathbb{R}}
|\phi|^2|N_{kxx}| \leq CE(t)\int_0^t\int_{\mathbb{R}}|\phi N_{kxx}|.
\end{aligned}
\end{equation}
Substituting \eqref{I1}-\eqref{I6} into \eqref{psix} leads to
\begin{equation}\label{phi2 psix}
\begin{aligned}
\left|\int_0^t\int_{\mathbb{R}}\frac{\phi^2}{2} \psi_xdxd\tau\right|
\leq &CE(t)\|\phi(t)\|^2+CE(0)\|\phi_0\|^2
+CE(t)\int_0^t\int_{\mathbb{R}}|\phi N_{kxx}|dxd\tau\\
&+CE(t)\int_0^t\int_{\mathbb{R}}|Q_{kx}|\phi^2dxd\tau
+CE(t)
\int_0^t\|\phi_x(\tau)\|^2d\tau.
\end{aligned}
\end{equation}
Combining \eqref{phi2+psi2} and \eqref{phi2 psix}, we have
\begin{equation}\label{4.1}
\begin{aligned}
&\|\phi(\cdot,t)\|^2
+\|\psi(\cdot,t)\|^2
+\int_0^t\int_{\mathbb{R}}|Q_{kx}|\phi^2
+\int_0^t\int_{\mathbb{R}}|N_{kt}|\psi^2
+\int_0^t\|\phi_x(\tau)\|^2
\\ &\leq \|\phi_0\|^2
+\|\psi_0\|^2
+C\int_0^t\int_{\mathbb{R}}|\phi N_{kxx}|dxd\tau\\
&\leq \|\phi_0\|^2
+\|\psi_0\|^2
+C\int_0^t
\|\phi(\cdot,\tau)\|\|N_{kxx}(\cdot,\tau)\|d\tau,
\end{aligned}
\end{equation}
provided that $E(t)$ is suitably small. Now set
\begin{equation}\label{3.35}
\begin{aligned}
F(t)\triangleq \|\phi_0\|^2
+\|\psi_0\|^2 +C\int_0^t
\|\phi(\cdot,\tau)\|\|N_{kxx}(\cdot,\tau)\|d\tau, \ \beta(t)\triangleq C\|N_{kxx}(\cdot,t)\|.
\end{aligned}
\end{equation}
It follows from \eqref{4.1} that
\[(F'(t))^2\leq\beta^2(t)F(t), \ \forall t\in[0,T].\]
Therefore, by \eqref{Nxx} and Lemma \ref{Gronwall},
we obtain the
estimate \eqref{L2} and complete the proof.
\end{proof}

We next derive the estimate for $\|\psi_x\|$ .

\begin{lemma}\label{H1psix}
Let the assumptions of Proposition \ref{A priori estimates} hold. There exist two  constants $C>0$ and $\chi_0>0$ such that if $E(T)\leq\chi_0$, then
\begin{equation}\label{H1.1}
\begin{aligned}
\|\psi_x(\cdot,t)\|^2
+\int_0^t\|
\psi_x(\cdot,\tau)\|^2
\leq &C(\|\phi_0\|^2
+\|\psi_0\|_1^2
+\varepsilon) \ \text{ for any } t\in[0,T].
\end{aligned}
\end{equation}

\end{lemma}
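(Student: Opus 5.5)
The plan is the standard trick for partially dissipative systems: obtain dissipation of $\psi_x$ from the parabolic equation for $\phi$, with the coupling term $N_k\psi_x$ supplying it. Rewrite the first equation of \eqref{rewrite} using $\psi_t=\phi_x$, i.e. $\phi_x=\psi_t$ and $\phi_{xx}=\psi_{xt}$:
\[
\phi_t-\psi_{xt}-(N_k+\phi)\psi_x=(Q_k+\psi)\phi_x+N_{kx}\psi+Q_{kx}\phi+N_{kxx}.
\]
Multiply this by $-\psi_x$ and integrate over $\mathbb{R}\times[0,t]$. The term $\psi_{xt}\psi_x$ produces $\frac12\|\psi_x(t)\|^2-\frac12\|\psi_{0x}\|^2$. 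The term $(N_k+\phi)\psi_x^2$ gives the dissipation $\int_0^t\int(N_k+\phi)\psi_x^2\ge \frac{n_+}{2}\int_0^t\|\psi_x\|^2$, using $N_k+\phi\ge n_+/2$ as in Lemma \ref{L2 estimate}.

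The remaining piece $-\int\phi_t\psi_x$ is handled by moving the time derivative:
\[
-\int_0^t\int\phi_t\psi_x=-\int\phi\psi_x\Big|_0^t+\int_0^t\int\phi\psi_{xt},
\qquad
\int\phi\psi_{xt}=\int\phi\phi_{xx}=-\|\phi_x\|^2 .
\]
The boundary terms are bounded by $\frac14\|\psi_x(t)\|^2+C\|\phi(t)\|^2+C\|\phi_0\|^2+\|\psi_{0x}\|^2$. The term $\int_0^t\|\phi_x\|^2$ is already controlled by Lemma \ref{L2 estimate}.

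The right-hand-side terms are estimated as follows.
\begin{itemize}
\item $\int|Q_k+\psi||\phi_x||\psi_x|\le \eta\int\psi_x^2+C_\eta\int\phi_x^2$, using boundedness of $Q_k$ and \eqref{3.6}.
\item $\int|N_{kx}\psi\psi_x|+|Q_{kx}\phi\psi_x|\le \eta\int\psi_x^2+C\int(|N_{kx}|^2\psi^2+|Q_{kx}|^2\phi^2)$. By \eqref{Ux}, $|N_{kx}|,|Q_{kx}|\le C\varepsilon$, and Lemma \ref{properties of N,Q}-(4) gives $|N_{kx}|=|Q_{kt}|$ and $|N_{kx}|\le C|N_{kt}|$. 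The last bound follows from \eqref{N1tN1x}, provided the factor $\frac{\sqrt{Q_1^2+4N_1}+Q_1}{2}$ is bounded below, which holds since it equals $-\lambda_1$ times a positive quantity and is nonzero away from vacuum; I would check this carefully. These terms are therefore bounded by $C\int|Q_{kx}|\phi^2+C\int|N_{kt}|\psi^2$, already controlled in \eqref{L2}.
\item $\int|N_{kxx}\psi_x|\le \eta\int\psi_x^2+C\int_0^t\|N_{kxx}\|^2$. By \eqref{Nxx} with $p=2$, combined with $\|N_{kxx}\|\le C\varepsilon^{3/2}$, this is bounded by $C\varepsilon^{3/2}\varepsilon^{1/2}\le C\varepsilon$.
\end{itemize}

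Choosing $\eta$ small relative to $n_+/2$, absorbing these terms, and invoking \eqref{L2} yields \eqref{H1.1}.

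The main obstacle is the lower-order coupling term $N_{kx}\psi\psi_x$. It must be reduced to a weighted $L^2$ quantity already controlled in Lemma \ref{L2 estimate}, and this requires the comparison $|N_{kx}|\lesssim|N_{kt}|$ together with the smallness of $\varepsilon$. If that comparison fails near points where $\lambda_k$ vanishes, I would instead bound the term by $C\varepsilon\int_0^t\|\psi\|\,\|\psi_x\|$ and use the time-decay of $\|N_{kx}\|_{L^\infty}$. A second fallback is to use the cross estimate with $\psi N_k$ weights, which matches the $L^2$ multiplier.
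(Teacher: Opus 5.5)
Your proposal is correct and follows the paper's proof. It uses the same multiplier $\psi_x$ on the equation for $\psi_{xt}$, the same identity $\phi_t\psi_x=(\phi\psi_x)_t-(\phi\phi_x)_x+\phi_x^2$, and the coupling term $N_k\psi_x^2$ as the source of dissipation, and it closes with Lemma~\ref{L2 estimate}. The only differences are in lower-order terms: you absorb $N_{kx}\psi\psi_x$ into the weighted quantity $\int_0^t\!\int|N_{kt}|\psi^2$, which is legitimate because $N_{kt}=-\lambda_k N_{kx}$ and $|\lambda_k|$ is bounded below along the wave ($\lambda_1\lambda_2=-N_k<0$), whereas the paper integrates by parts to $\frac12\psi^2N_{kxx}$ and uses $\int_0^t\|N_{kxx}\|_{L^1}^2\le C\varepsilon$. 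You also use Young's inequality with a small $\eta$ in place of Lemma~\ref{Gronwall}, which is more careful than the paper's absorption of $\frac12|Q_k|\psi_x^2$ into $N_k\psi_x^2$.
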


\begin{proof}
Substituting $\psi_t=\phi_x$ into the first equation of \eqref{rewrite} yields
\begin{equation}\label{psitx}
\begin{aligned}
\psi_{xt}=\phi_t -(\phi\psi+\phi Q_k+N_k\psi)_x-N_{kxx}.
\end{aligned}
\end{equation}
Multiplying \eqref{psitx} by $\psi_x$ and integrating the result over $\mathbb{R}\times[0,t]$, noting \begin{equation*}
\begin{aligned}
\phi_t\psi_x&=(\phi\psi_x)_t-\phi\psi_{xt}
=(\phi\psi_x)_t-\phi\phi_{xx}
=(\phi\psi_x)_t-(\phi\phi_x)_x+\phi_x^2,
\end{aligned}
\end{equation*}
we have
\begin{equation}\label{psix2}
\begin{aligned}
&\left.\left(\int_{\mathbb{R}}\frac{\psi_x^2}{2}
- \phi\psi_x dx\right)\right|_0^t
+\int_0^t\int_{\mathbb{R}}        N_k\psi_x^2dxd\tau\\
=&\int_0^t\int_{\mathbb{R}}\phi_x^2dxd\tau
-\int_0^t\int_{\mathbb{R}}\psi_{x}(\phi Q_k)_{x} dxd\tau
-\int_0^t\int_{\mathbb{R}}\psi_{x}(\phi\psi)_{x} dxd\tau\\
&+\int_0^t\int_{\mathbb{R}}\frac{\psi^2}{2}N_{kxx} dxd\tau
-\int_0^t\int_{\mathbb{R}}\psi_{x}N_{kxx} dxd\tau.
\end{aligned}
\end{equation}
We next estimate each term on the RHS of \eqref{psix2}. By 
Young's inequality, one has
\begin{equation}\label{J2}
\begin{aligned}
-\int_0^t\int_{\mathbb{R}}\psi_x(\phi Q_k)_x dxd\tau
=&-\int_0^t\int_{\mathbb{R}}\psi_x\phi_xQ_kdxd\tau
-\int_0^t\int_{\mathbb{R}}\psi_x\phi Q_{kx}dxd\tau\\
\leq &\frac{1}{2}\int_0^t\int_{\mathbb{R}}
|Q_k||\psi_x|^2dxd\tau
+\frac{1}{2}\int_0^t\int_{\mathbb{R}}
|Q_k||\phi_x|^2dxd\tau\\
&+\frac{1}{4}\int_0^t\int_{\mathbb{R}}
|Q_{kx}|\phi^2dxd\tau
+C_p \varepsilon
\int_0^t\|\psi_x(\tau)\|^2d\tau,
\end{aligned}
\end{equation}
where we have used $\left\|N_{kx}\right\|_{L^\infty}
\leq C_p \varepsilon$ from \eqref{Ux}. Using Young's inequality
and  \eqref{3.6}, we have
\begin{equation}\label{J3'}
\begin{aligned}
-\int_0^t\int_{\mathbb{R}}\psi_{x}(\phi \psi)_{x} dxd\tau
&=-\int_0^t\int_{\mathbb{R}}\psi_{x}\phi_{x}\psi dxd\tau-\int_0^t\int_{\mathbb{R}}\phi\psi_{x}^2
dxd\tau\\&\leq CE(t) \int_0^t
\left(\|\psi_x\|^{2}
+\|\phi_x\|^2\right)d\tau+CE(t) \int_0^t\|\psi_x\|^2d\tau.
\end{aligned}
\end{equation}
Thanks to the Sobolev inequality and \eqref{3.6}, we get
\begin{equation}\label{J4'}
\begin{aligned}
\int_0^t\int_{\mathbb{R}}\frac{\psi^2}{2}N_{kxx} dxd\tau
&\leq C\int_0^t
\|\psi\|\|\psi_x\|\|N_{kxx}\|_{L_1} d\tau\\
&\leq CE\int_0^t\|\psi_x(\tau)\|^2d\tau
+CE\int_0^t\|N_{kxx}\|_{L_1}^2d\tau.
\end{aligned}
\end{equation}
From Lemma \ref{properties of N,Q}-(3), one has
$\|N_{kxx}\|_{L_1}^2\leq C_p \min\{\varepsilon^2,\ t^{-2}\}$. If $t<\varepsilon^{-1}\triangleq t_0$, it then follows that
\begin{equation}\label{t<t0'}
\begin{aligned}
\int_0^t\|N_{kxx}\|_{L_1}^2d\tau
\leq \int_0^{t_0}\|N_{kxx}\|_{L_1}^2d\tau
\leq C_p \varepsilon^2t_0
\leq C_p\varepsilon.
\end{aligned}
\end{equation}
If $t>t_0$, i.e. $t^{-1}<\varepsilon$, the inequality  $\|N_{kxx}\|_{L_1}^2\leq C_p \ t^{-2}$ holds, which implies
\begin{equation}\label{t>t0''}
\begin{aligned}
\int_0^t\|N_{kxx}\|_{L_1}^2d\tau
&=\int_0^{t_0}\|N_{kxx}\|_{L_1}^2d\tau
+\int_{t_0}^t\|N_{kxx}\|_{L_1}^2d\tau\\
&\leq C_p \varepsilon^2t_0
+C_p\int_{t_0}^t\tau^{-2}d\tau
\leq C_p\varepsilon.
\end{aligned}
\end{equation}
A combination of \eqref{t<t0'} and \eqref{t>t0''} yields
\begin{equation}\label{NkxxL1}
\begin{aligned}
\int_0^t\|N_{kxx}\|_{L_1}^2d\tau
\leq C_p\varepsilon.
\end{aligned}
\end{equation}
Substituting \eqref{NkxxL1} into the RHS  of \eqref{J4'} leads to
\begin{equation}\label{J4}
\begin{aligned}
\int_0^t\int_{\mathbb{R}}\frac{\psi^2}{2}N_{kxx} dxd\tau
&\leq CE(t) \int_0^t\|\psi_x(\tau)\|^2d\tau
+C E(t)\varepsilon.
\end{aligned}
\end{equation}

Therefore, combining the estimates \eqref{J2}, \eqref{J3'}, \eqref{J4} and Lemma \ref{L2 estimate}, we arrive at
\begin{equation}\label{psix2'}
\begin{aligned}
\|\psi_x(\cdot,t)\|^2
+\int_0^t\|
\psi_x(\cdot,\tau)\|^2
\leq &C(\|\phi_0\|^2
+\|\psi_0\|^2+\|\psi_{0x}\|^2)
+C E\varepsilon
+\int_0^t\int_{\mathbb{R}}
|\psi_{x}N_{kxx}|,
\end{aligned}
\end{equation}
provided that $E(t)$ is suitably small. As in \eqref{3.35}, by Lemma \ref{Gronwall} and \eqref{Nxx}, we obtain the estimate \eqref{H1.1}.
\end{proof}

We are now ready to establish the estimate for $\|\phi_x\|$.

\begin{lemma}\label{H1phix}
Let the assumptions of Proposition \ref{A priori estimates} hold. There exist two  constants $C>0$ and $\chi_0>0$ such that if $E(T)\leq\chi_0$, then for any $t\in[0,T]$ it holds that
\begin{equation}\label{H1.2}
\begin{aligned}
\|\phi_x(\cdot,t)\|^2
+\int_0^t\|\phi_{xx}(\cdot,\tau)\|^2
\leq &C(\|\phi_0\|_1^2
+\|\psi_0\|_1^2
+\varepsilon).
\end{aligned}
\end{equation}

\end{lemma}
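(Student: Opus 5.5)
The plan is to multiply the first equation of \eqref{rewrite} by $-\phi_{xx}$ and integrate over $\mathbb{R}\times[0,t]$. Writing $-\phi_t\phi_{xx}=-(\phi_t\phi_x)_x+\frac12(\phi_x^2)_t$ and integrating by parts in $x$ gives
\begin{equation*}
\frac12\|\phi_x(\cdot,t)\|^2+\int_0^t\|\phi_{xx}\|^2d\tau
=\frac12\|\phi_{0x}\|^2-\int_0^t\int_{\mathbb{R}}(\phi\psi+\phi Q_k+N_k\psi)_x\phi_{xx}
-\int_0^t\int_{\mathbb{R}}N_{kxx}\phi_{xx}.
\end{equation*}
The whole task is then to absorb the right-hand side into $\frac12\int_0^t\|\phi_{xx}\|^2$ plus quantities already controlled by Lemmas \ref{L2 estimate} and \ref{H1psix}.

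First expand $(\phi\psi+\phi Q_k+N_k\psi)_x=\phi_x\psi+\phi\psi_x+\phi_xQ_k+\phi Q_{kx}+N_{kx}\psi+N_k\psi_x$. Each product with $\phi_{xx}$ is treated by Young's inequality with a small weight on $\|\phi_{xx}\|^2$:
\begin{itemize}
\item $\int|\phi_xQ_k\phi_{xx}|\le\frac18\|\phi_{xx}\|^2+C\|\phi_x\|^2$, since $Q_k$ is bounded.
\item $\int|N_k\psi_x\phi_{xx}|\le\frac18\|\phi_{xx}\|^2+C\|\psi_x\|^2$, since $N_k$ is bounded.
\item The nonlinear terms satisfy $|\phi_x\psi|+|\phi\psi_x|\le CE(t)(|\phi_x|+|\psi_x|)$ by \eqref{3.6}, so they contribute $CE(t)\int_0^t(\|\phi_x\|^2+\|\psi_x\|^2+\|\phi_{xx}\|^2)$.
\item The terms with $\phi Q_{kx}$ and $N_{kx}\psi$ use $|N_{kx}|\le C|Q_{kx}|$ (Lemma \ref{properties of N,Q}-(4) together with \eqref{NxQx}) and $\|Q_{kx}\|_{L^\infty}\le C\varepsilon$. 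They are bounded by $\frac18\|\phi_{xx}\|^2+C\varepsilon\int|Q_{kx}|(\phi^2+\psi^2)$.
\end{itemize}

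The $\phi$-part of the last item is controlled by the weighted dissipation in \eqref{L2}. The $\psi$-part is the term I expect to be the \textbf{main obstacle}, since \eqref{L2} controls $|N_{kt}|\psi^2$ rather than $|Q_{kx}|\psi^2$. I will avoid the weight altogether: use $\int|Q_{kx}|\psi^2\le\|Q_{kx}\|_{L^1}\|\psi\|_{L^\infty}^2\le C\|\psi\|\|\psi_x\|\le CE\|\psi_x\|$. The time integral of this is still not obviously finite, so instead I will bound the term as $\|N_{kx}\|_{L^\infty}\|\psi\|\|\phi_{xx}\|\le C\varepsilon\|\psi\|\|\phi_{xx}\|$. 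For long times I will then use the decay $\|N_{kx}\|_{L^\infty}\le Ct^{-1}$ with $\|\psi\|_{L^\infty}^2\le\|\psi\|\|\psi_x\|$, i.e. $\|N_{kx}\psi\|^2\le\|N_{kx}\|_{L^2}^2\|\psi\|_{L^\infty}^2\le C\min\{\varepsilon,t^{-1}\}E\|\psi_x\|$. This is bounded by $C\|\psi_x\|^2+C\min\{\varepsilon^2,t^{-2}\}E^2$, whose time integral is $\le C\varepsilon$ by the same computation as \eqref{NkxxL1}.

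The source term satisfies $\int_0^t\int|N_{kxx}\phi_{xx}|\le\frac18\int_0^t\|\phi_{xx}\|^2+C\int_0^t\|N_{kxx}\|^2$. By \eqref{Ul} with $p=2$, $\|N_{kxx}\|^2\le C\min\{\varepsilon^3,t^{-3}\}$, so its time integral is at most $C\varepsilon^2\le C\varepsilon$ (split at $t_0=\varepsilon^{-1}$ as before).

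Collecting everything and taking $E(T)\le\chi_0$ small to absorb the $\phi_{xx}$ terms, we get $\|\phi_x(t)\|^2+\int_0^t\|\phi_{xx}\|^2\le\|\phi_{0x}\|^2+C\int_0^t(\|\phi_x\|^2+\|\psi_x\|^2+|Q_{kx}|\phi^2)+C\varepsilon$. The remaining integrals are bounded by Lemmas \ref{L2 estimate} and \ref{H1psix}, which yields \eqref{H1.2}.
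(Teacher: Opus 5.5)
Your proposal is correct and follows essentially the same route as the paper: you multiply the $\phi$-equation by $-\phi_{xx}$, apply Young's inequality term by term, control $\int|Q_{kx}|^2\phi^2$ by $C\varepsilon\int|Q_{kx}|\phi^2$, and handle the $N_{kx}\psi$ term (your self-identified obstacle) via $\|N_{kx}\|^2\|\psi\|\|\psi_x\|$ and the decay $\|N_{kx}\|^4\le C\min\{\varepsilon^2,\tau^{-2}\}$, which is exactly the paper's estimate \eqref{K3}. You then close the remaining integrals with Lemmas \ref{L2 estimate} and \ref{H1psix}, just as the paper does.
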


\begin{proof}
Multiplying the first equation of \eqref{rewrite} by $-\phi_{xx}$ and integrating the result
over $\mathbb{R}\times[0,t] $, we have
\begin{equation}\label{phixx}
\begin{aligned}
\left.\int_{\mathbb{R}}\frac{\phi_x^2}{2}
dx\right|_0^t
+\int_0^t\int_{\mathbb{R}} \phi_{xx}^2
=&-\int_0^t\int_{\mathbb{R}}
\phi_{xx}\left[(\phi\psi+\phi Q_k+N_k\psi)_{x}
+N_{kxx}\right]\\
\leq &\frac{1}{2}\int_0^t\int_{\mathbb{R}}
\phi_{xx}^2
+C\int_0^t\int_{\mathbb{R}}
(|Q_k||\phi_x|^2+|Q_{kx}|^2|\phi|^2+|N_{kx}|^2|\psi|^2
\\&\quad+|N_k||\psi_x|^2
+|\phi_x|^2|\psi|^2+|\phi|^2|\psi_x|^2
)
+\frac{1}{2}\int_0^t\int_{\mathbb{R}}
|N_{kxx}|^2.
\end{aligned}
\end{equation}
By the Sobolev inequality, Young's inequality and \eqref{3.6}, we get
\begin{equation}\label{K3}
\begin{aligned}
\int_0^t\int_{\mathbb{R}}
|N_{kx}|^2|\psi|^2dxd\tau
\leq &C\int_0^t
\|\psi\|\|\psi_x\|
\|N_{kx}\|^2d\tau\\
\leq &C\int_0^t\|\phi\|
\left(\|\psi_x\|^2+\|N_{kx}\|^4\right)d\tau\\
\leq &CE(t)\int_0^t\|\psi_x\|^2d\tau
+CE(t)\int_0^t\min(\varepsilon^2, \tau^{-2})d\tau\\
\leq &CE(t) \int_0^t
\|\psi_x\|^2d\tau+CE(t)\varepsilon,
\end{aligned}
\end{equation}
where we have used \eqref{Ux} of Lemma \ref{properties of N,Q} in the third inequality.
By H\"{o}lder's inequality, we further have
\begin{equation}\label{K5}
\begin{aligned}
\int_0^t\int_{\mathbb{R}}
|\phi_x|^2|\psi|^2dxd\tau
\leq C \|\psi\|_{L^\infty}^2
\int_0^t\int_{\mathbb{R}}
|\phi_x|^2d\tau
\leq CE^2\int_0^t
\|\phi_x\|^2d\tau,
\end{aligned}
\end{equation}
and
\begin{equation}\label{K6}
\begin{aligned}
\int_0^t\int_{\mathbb{R}}
|\phi|^2|\psi_x|^2dxd\tau
\leq C \|\phi\|_{L^\infty}^2
\int_0^t\int_{\mathbb{R}}
|\psi_x|^2d\tau
\leq CE^2\int_0^t
\|\psi_x\|^2d\tau.
\end{aligned}
\end{equation}
Owing to \eqref{Ul}, it holds
\begin{equation}\label{K7}
\begin{aligned}
\frac{1}{2}\int_0^t\int_{\mathbb{R}}
|N_{kxx}|^2dxd\tau
\leq C\int_0^t
\min\{\varepsilon^3,
\varepsilon^{1/2}\tau^{-5/2}\}d\tau
\leq C\varepsilon^2.
\end{aligned}
\end{equation}
Now substituting \eqref{K3}-\eqref{K7} into \eqref{phixx}, by
Lemmas \ref{L2 estimate} and \ref{H1psix}, we obtain the desired estimate \eqref{H1.2}.
\end{proof}

\begin{proof}[Proof of Proposition \ref{A priori estimates}]
It is a consequence of Lemmas \ref{L2 estimate}, \ref{H1psix} and \ref{H1phix}.
\end{proof}

We are now in a position to prove Proposition \ref{phi stability}.
\begin{proof}[Proof of Proposition \ref{phi stability}]
We define 
\begin{equation}\label{deta0}
\begin{aligned}
\Xi_0:=\frac{\chi_0}{4},
\ \delta_1:=
\frac{1}{4}\sqrt{\frac{\chi_0^2}{C_0}-16\varepsilon},
\end{aligned}
\end{equation}
where $\chi_0\leq1$ and $C_0>1$ are constants given in Proposition \ref{A priori estimates}, and  $\varepsilon$ is suitably small.

\emph{Step 1.} Since
$\|\phi_0,\psi_0\|_1 \leq \delta_1\leq\frac{\chi_0}{4}$, by the local existence result established in Proposition \ref{local existence}, there is a positive constant $T_0=T_0(\chi_0)$ such that the system \eqref{rewrite}-\eqref{phi0,psi0} has a unique solution on $[0,T_0]$, which satisfies
\begin{equation}\label{3.52}
\|(\phi,\psi)\|_1\leq 2\|\phi_0,\psi_0\|_1
\leq \frac{\chi_0}{2}<\chi_0 \text{ for } t \in [0,T_0].\end{equation}
Applying the \emph{a priori} estimate established in Proposition \ref{A priori estimates} with $T=T_0$, we get from \eqref{deta0} that
\begin{equation*}
\begin{aligned}
\|(\phi,\psi)(T_0)\|_1\leq
\sqrt{C_0}
\sqrt{\|\phi_0,\psi_0\|_1^2+\varepsilon}
\leq \sqrt{C_0}\sqrt{\delta_1^2+\varepsilon}
\leq \frac{\chi_0}{4}=\Xi_0.
\end{aligned}
\end{equation*}

\emph{Step 2.}
Taking $t=T_0$ as the new initial time and applying Proposition \ref{local existence}, one can see that the system \eqref{rewrite}-\eqref{phi0,psi0} has a unique solution on $[T_0,2T_0]$, which satisfies
\begin{equation*}
\begin{aligned}
\|(\phi,\psi)\|_1\leq 2\|(\phi,\psi)(T_0)\|_1
\leq \frac{\chi_0}{2}
< \chi_0
\ \text {for} \ t \in [T_0,2T_0].
\end{aligned}
\end{equation*}
This along with \eqref{3.52} gives rise to $\|(\phi,\psi)\|_1
\leq \chi_0$ for $t \in [0,2T_0]$. Hence, applying Proposition \ref{A priori estimates} with $T=2T_0$, we get
\begin{equation*}
\begin{aligned}
\|(\phi,\psi)\|_1\leq
\sqrt{C_0}
\sqrt{\|\phi_0,\psi_0\|_1^2+\varepsilon} \text{ for }t \in [0, 2T_0].
\end{aligned}
\end{equation*}
Now it follows from \eqref{deta0} that
\begin{equation*}
\begin{aligned}
\|(\phi,\psi)(2T_0)\|_1\leq
\sqrt{C_0}
\sqrt{\|\phi_0,\psi_0\|_1^2+\varepsilon}
\leq \sqrt{C_0}\sqrt{\delta_0^2+\varepsilon}
\leq \frac{\chi_0}{4}=\Xi_0.
\end{aligned}
\end{equation*}
When $\chi_0\ll 1$, noting $n_+>0$, $N_k+\phi$ has a lower bound
$$N_k+\phi>\frac{n_+}{2} \ \text{ for } t\in[0,2T_0],$$
which ensures the feasibility of successive extensions. Then by repeating this continuation process, we can extend the solution to the whole time
interval $[0, \infty)$ successively. Moreover, the solution satisfies
\begin{equation}\label{global}
\begin{aligned}
&\sup_{0\leq \tau< \infty}
\left\|(\phi,\psi)(\tau,\cdot)
\right\|_1^2
+\int_{0}^{\infty}
\left(\left\||Q_{kx}|^{1/2}\phi(\tau) \right\|^{2}
+\left\||N_{k\tau}|^{1/2}\psi(\tau) \right\|^{2}+\left\|\phi_x(\tau) \right\|_1^{2}+\left\|\psi_x(\tau) \right\|^{2}\right)d\tau\\
&\leq C (\|(\phi_0,\psi_0)\|_1^2+\varepsilon).
\end{aligned}
\end{equation}

\emph{Step 3.}
It remains to show the $L^\infty$ convergence \eqref{asymptotic}.  We first deduce from \eqref{global} that
\begin{equation}\label{phix2+psix2}
\begin{aligned}
\int_{0}^{+\infty}\int_{\mathbb{R}}
\left(\phi_x^2+\psi_x^2\right)dx
\leq C (\|(\phi_0,\psi_0)\|_1^2+\varepsilon)
<+\infty.
\end{aligned}
\end{equation}
We next show $\int_{0}^{+\infty}
\left|\frac{d}{dt}\int_{\mathbb{R}}
\left(\phi_x^2+\psi_x^2\right)dx\right|d\tau
<+\infty.$ A straightforward calculation yields
\begin{equation}\label{phix2t+psix2t}
\begin{aligned}
\int_{0}^{+\infty}
\left|\frac{d}{dt}\int_{\mathbb{R}}
\left(\phi_x^2+\psi_x^2\right)dx\right|d\tau
&=2\int_{0}^{+\infty}\left|\int_{\mathbb{R}}
\phi_x\phi_{xt}+\psi_x\psi_{xt}dx\right| d\tau\\
&\leq C\int_{0}^{+\infty}\left|\int_{\mathbb{R}}
\phi_x\phi_{xt}dx\right|d\tau
+C\int_{0}^{+\infty}\left|\int_{\mathbb{R}}
\psi_x\psi_{xt}dx\right|d\tau.
\end{aligned}
\end{equation}
Integration by parts and using the first equation of \eqref{rewrite}, we have
\begin{equation*}
\begin{aligned}
\int_{0}^{+\infty}\left|\int_{\mathbb{R}}
\phi_x\phi_{xt}dx\right|d\tau
&=\int_{0}^{+\infty}\left|\int_{\mathbb{R}}
\phi_{xx}\phi_tdx\right|d\tau\\
&=\int_{0}^{+\infty}\left|\int_{\mathbb{R}}
\phi_{xx}(\phi\psi+\phi Q_k+N_k\psi)_{x}
+\phi_{xx}^2+\phi_{xx}N_{kxx}dx\right|d\tau,
\end{aligned}
\end{equation*}
which, in combination with \eqref{3.6}, leads to
\begin{equation}\label{phix2t}
\begin{aligned}
\int_{0}^{+\infty}\left|\int_{\mathbb{R}}
\phi_x\phi_{xt}
\right|d\tau
&\leq CE \int_{0}^{+\infty}\int
\left(|Q_{kx}|\phi^2+|N_{kt}|\psi^{2}+\phi_x^{2}+\psi_x^{2}\right)\\
&\quad+C\int_{0}^{+\infty}
\left\|\phi_{xx}(\tau) \right\|^{2}d\tau
+\frac{1}{2}\int_{0}^{+\infty}
\left\|N_{kxx}\right\|^{2}d\tau\\
&\leq C (\|(\phi_0,\psi_0)\|_1^2+\varepsilon),
\end{aligned}
\end{equation}
where we have used $\int_{0}^{+\infty}
\left\|N_{kxx}\right\|^{2}d\tau\leq C\varepsilon^3$ from \eqref{Ul}.
By the second equation of \eqref{rewrite}, we get
\begin{equation}\label{psix2t}
\begin{aligned}
\int_{0}^{+\infty}\left|\int_{\mathbb{R}}
\psi_x\psi_{xt}dx\right|d\tau
&=\int_{0}^{+\infty}\left|\int_{\mathbb{R}}
\psi_x\phi_{xx}dx\right|d\tau\\
&\leq C\int_{0}^{+\infty}
\left\|\psi_x(\tau)\right\|^{2}
+\left\|\phi_{xx}(\tau) \right\|^{2}
d\tau\\
&\leq C (\|(\phi_0,\psi_0)\|_1^2+\varepsilon).
\end{aligned}
\end{equation}
Substituting \eqref{phix2t} and \eqref{psix2t} into \eqref{phix2t+psix2t} leads to
\begin{equation*}
\begin{aligned}
\int_{0}^{+\infty}
\left|\frac{d}{dt}\int_{\mathbb{R}}
\left(\phi_x^2+\psi_x^2\right)dx\right|d\tau
<\infty.
\end{aligned}
\end{equation*}
This together with \eqref{phix2+psix2} gives rise to
\begin{equation*}
\begin{aligned}
\int_{0}^{+\infty}\left[\int_{\mathbb{R}}
\left(\phi_x^2+\psi_x^2\right)dx+
\left|\frac{d}{dt}\int_{\mathbb{R}}
\left(\phi_x^2+\psi_x^2\right)dx\right|\right]d\tau
<+\infty.
\end{aligned}
\end{equation*}
Hence
\begin{equation*}
\lim\limits_{t\rightarrow \infty}
\left\|(\phi_x,\psi_x)(\cdot,t)\right\|^2=0.
\end{equation*}
It then follows from the Sobolev inequality that
\begin{equation}\label{Sobolev's inequality}
\begin{split}
\sup_{x\in \mathbb{R}}
\left|(\phi,\psi)(x,t)\right|^2
&\leq 2
(\left\|\phi(\cdot,t)\right\| \left\|\phi_x(\cdot,t)\right\|+\left\|\psi(\cdot,t)\right\| \left\|\psi_x(\cdot,t)\right\|)\\& \leq C(\left\|\phi_x(\cdot,t)\right\|+\left\|\psi_x(\cdot,t)\right\|)\rightarrow0 \text{ as } t\rightarrow\infty.
\end{split}\end{equation} This completes the proof of Proposition \ref{phi stability}.
\end{proof}

\begin{proof}[Proof of Theorem \ref{the1.1}]
Recalling $(n-N_k,q-Q_k)=(\phi,\psi)$, by Proposition \ref{phi stability}, one  can see that if the initial value $(n_0,q_0)(x)$ satisfies \eqref{n0-n0r}, then the system \eqref{1.1}-\eqref{i d} has a unique global solution satisfying \eqref{solutio}. Furthermore, by Lemma \ref{lem2.1}-(2) and \eqref{Sobolev's inequality}, $(n,q)$ has the asymptotic behavior \eqref{approximate}. We complete the proof of Theorem \ref{the1.1}.
\end{proof}

\section{Stability of composite  rarefaction waves}\label{Sect.4}
In this section, we study the asymptotic stability of composite rarefaction waves and prove Theorem \ref{the1.2}.
By decomposing $(n,q)=(N+\Phi,Q+\Psi)$, where $(N,Q)$ is the smooth approximation of the composite rarefaction wave constructed in  Section 2, we get the following equation of $(\Phi,\Psi)$
\begin{equation}\label{rewrite'}
\left\{
\begin{aligned}
&\Phi_{t} -(\Phi\Psi+\Phi Q+N\Psi)_{x}=\Phi_{xx}+N_{xx}+g(N,Q)_x,\\
&\Psi_{t}-\Phi_{x}=0,
\end{aligned}
\right.
\end{equation}
with initial value
\begin{equation}\label{Phi0,Psi0}
\begin{aligned}
(\Phi,\Psi)(x,0):=(\Phi_0,\Psi_0)(x)
=(n_0-N(x,0),q_0-Q(x,0))\in H^1.
\end{aligned}
\end{equation}
As in the single wave case, we also search for solutions of \eqref{rewrite'}-\eqref{Phi0,Psi0} in the function space $X(0,+\infty)$.

\begin{proposition}[Global existence]\label{Global existence}
Let $(n_+,q_+)\in RR(n_-,q_-)$ with $n_+>0$. There exists a constant $\delta_1>0$ such that if
$\|(\Phi_0,\Psi_0)\|_1\leq \delta_1$, then the system \eqref{rewrite'}-\eqref{Phi0,Psi0} has a unique global solution $(\Phi,\Psi)\in X(0,+\infty)$ satisfying
\begin{equation}\label{global estimates}
\begin{aligned}
\sup_{0\leq \tau\leq t}
\left\|(\Phi,\Psi)(\cdot,t)
\right\|_1^2
+\int_{0}^{t}
&\Big(\left\||Q_{x}|^{1/2}\Phi(\cdot,\tau) \right\|^{2}
+\left\||N_{t}|^{1/2}\Psi(\cdot,\tau) \right\|^{2}\\&+\left\|\Phi_x(\cdot,\tau) \right\|_1^{2}+\left\|\Psi_x(\cdot,\tau) \right\|^{2}\Big)d\tau\leq C (\|(\Phi_0,\Psi_0)\|_1^2+\varepsilon)
\end{aligned}
\end{equation}
for any $t >0$.
\end{proposition}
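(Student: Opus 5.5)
The plan is to mirror the single-wave argument of Section \ref{Sect.3}: combine a local existence result (the analogue of Proposition \ref{local existence}, proved by standard iteration) with an \emph{a priori} estimate on $X(0,T)$ under the smallness assumption $E(T)\le\chi_0$. The continuation argument from the proof of Proposition \ref{phi stability} then applies verbatim. The only structural differences between \eqref{rewrite'} and \eqref{rewrite} are the replacement of $(N_k,Q_k)$ by $(N,Q)$ and the extra source term $g(N,Q)_x$. Lemma \ref{properties of N',Q'} supplies for $(N,Q)$ exactly the properties (1)--(4) used in Lemma \ref{properties of N,Q}, together with the bounds \eqref{gx}--\eqref{tgx} for $g_x$.

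\emph{$L^2$ estimate.} Multiply the first equation of \eqref{rewrite'} by $\Phi$ and the second by $N\Psi$, then integrate. Since $N_x<0$ and $N_t<0$ (Lemma \ref{properties of N',Q'}-(1)), we again obtain the good terms $\frac12\int|Q_x|\Phi^2+\frac12\int|N_t|\Psi^2+\int\Phi_x^2$. To handle the cubic term $\int\frac{\Phi^2}{2}\Psi_x$, I will substitute for $\Psi_x$ using the analogue of \eqref{psix}; this formula now contains an extra term $-(N+\Phi)^{-1}g_x$. All other terms are bounded exactly as in \eqref{I1}--\eqref{I6}, using $|N_t|\le C|Q_x|$ and the lower bound $N+\Phi\ge n_+/2$. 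The new contributions are $\int_0^t\int\Phi g_x$ and $\int_0^t\int\Phi^2 g_x$, and both are bounded by $C\int_0^t\|\Phi\|\,(\|N_{xx}\|+\|g_x\|)\,d\tau$. I then apply Lemma \ref{Gronwall} with $\beta=C(\|N_{xx}\|+\|g_x\|)$. Since $\int_0^\infty\|g_x\|\,d\tau\le C\varepsilon^{1/2}$ by \eqref{tgx} with $p=2$, and $\int_0^\infty\|N_{xx}\|\,d\tau\le C\varepsilon^{1/2}$ by \eqref{N'xx}, this yields the $L^2$ bound with right-hand side $C(\|(\Phi_0,\Psi_0)\|^2+\varepsilon)$.

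\emph{$H^1$ estimates.} For $\Psi_x$, use the analogue of \eqref{psitx}, which now also contains $g_x$, multiply by $\Psi_x$, and reproduce \eqref{psix2}. The extra terms are $-\int\Psi_x g_x$ and $\int\Phi_x g_x$ (the latter arising through $\phi_t$). I handle the first by Gronwall as in \eqref{psix2'}, and the second by Young's inequality together with $\int_0^\infty\|g_x\|^2\,d\tau\le C\varepsilon^3\int(1+(\varepsilon t)^2)^{-2\theta/3}\le C\varepsilon^2$, using $\theta>3/2$. For $\Phi_x$, multiply by $-\Phi_{xx}$ as in \eqref{phixx}; the only new term is $\int\Phi_{xx}g_x\le\frac14\|\Phi_{xx}\|^2+C\|g_x\|^2$, which is controlled by the same time-integrated bound.

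Summing the three estimates closes the \emph{a priori} bound \eqref{global estimates} for $E(T)$ small and $\varepsilon$ small. The main point requiring care is the $L^2$ step: the interaction term $g_x$ does not carry a sign. We therefore rely on its integrability in time (\eqref{tgx}), which comes from the separation of the two wave families ($\lambda_1(\bar n,\bar q)<0<\lambda_2(\bar n,\bar q)$ implicitly assumed in Lemma \ref{properties of N',Q'}). This is what keeps the Gronwall constant independent of $T$. Uniqueness follows from the local theory.
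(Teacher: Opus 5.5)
Your proposal is correct and follows essentially the same route as the paper: the multipliers $\Phi$ and $N\Psi$ for the $L^2$ estimate, substitution of $\Psi_x$ from the first equation of \eqref{rewrite'} into the cubic term $\int\frac{\Phi^2}{2}\Psi_x$, Lemma \ref{Gronwall} with $\beta=C(\|N_{xx}\|+\|g(N,Q)_x\|)$ integrable in time by \eqref{N'xx} and \eqref{tgx}, then the $\Psi_x$ and $\Phi_x$ estimates as in Lemmas \ref{H1Psix} and \ref{H1Phix}, and finally the continuation argument of Proposition \ref{phi stability}. There are two harmless slips: the good term $\frac12\int|Q_x|\Phi^2$ comes from $Q_x<0$ rather than $N_x<0$ (the latter only enters through the positive lower bound on $N+\Phi$), and no term $\int\Phi_x g(N,Q)_x$ actually arises in the $\Psi_x$ estimate, since $\Phi_t\Psi_x=(\Phi\Psi_x)_t-(\Phi\Phi_x)_x+\Phi_x^2$ uses only $\Psi_{xt}=\Phi_{xx}$.
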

As in Proposition \ref{local existence}, the local existence of solutions to the system \eqref{rewrite'}-\eqref{Phi0,Psi0} is standard. To prove Proposition \ref{global estimates} we only need to establish the following \emph{a priori} estimate.

\begin{proposition}[\emph{A priori} estimate]\label{'A priori estimates'}
Let $(n_+,q_+)\in RR(n_-,q_-)$ with $n_+>0$. Suppose that the system \eqref{rewrite'}-\eqref{Phi0,Psi0} has a solution $(\Phi,\Psi)\in X(0,T)$ for some $T>0$.
Then there exists a positive constants $\chi_1\ll 1$ independent of $T$ such that if
\begin{equation}\label{assumption'}
E(t):=\sup\limits_{0\leq t\leq T}\|(\Phi,\Psi)(\cdot,t)\|_1
\leq \chi_1,
\end{equation}
then the estimate \eqref{global estimates} holds for any $t\in[0,T]$.

\end{proposition}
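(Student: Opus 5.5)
The plan is to copy the three-step energy scheme of Section \ref{Sect.3} (Lemmas \ref{L2 estimate}, \ref{H1psix}, \ref{H1phix}), with $(N_k,Q_k)$ replaced by the composite approximation $(N,Q)$. Two things must be handled: the new source term $g(N,Q)_x$ in \eqref{rewrite'}, and the use of Lemma \ref{properties of N',Q'} in place of Lemma \ref{properties of N,Q}.

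\emph{Step 1: basic $L^2$ estimate.} We multiply the first equation of \eqref{rewrite'} by $\Phi$ and the second by $N\Psi$, then integrate over $\mathbb{R}\times[0,t]$. As in \eqref{phi2+psi2}, the resulting good terms are
$$-\tfrac12\int\!\!\int Q_x\Phi^2 \quad\text{and}\quad -\tfrac12\int\!\!\int N_t\Psi^2,$$
and both are nonnegative by Lemma \ref{properties of N',Q'}-(1). The cubic term $\int\!\!\int \frac{\Phi^2}{2}\Psi_x$ is treated exactly as before. We replace $\Psi_x$ using the analogue of \eqref{psix}, which now contains the extra term $-(N+\Phi)^{-1}g(N,Q)_x$. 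The identities $|N_t|\le C|Q_x|$ and $|N_x|\le C|Q_x|$ from Lemma \ref{properties of N',Q'}-(4) are what allow every term to be absorbed into $E(t)\int\!\!\int(|Q_x|\Phi^2+\Phi_x^2)$. The lower bound $N+\Phi\ge n_+/2$ follows from monotonicity of $N$ and smallness of $E$. The new contributions are
$$\int\!\!\int \Phi\, g(N,Q)_x \quad\text{and}\quad \int\!\!\int \Phi^3 (N+\Phi)^{-1} g_x,$$
and both are bounded by $C\int_0^t\|\Phi\|\,\|g(N,Q)_x\|_{L^1}\,d\tau$ up to a factor involving $E$, using $\|\Phi\|_{L^\infty}\le CE$. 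We then apply Lemma \ref{Gronwall} with
$$\beta(t)=C\big(\|N_{xx}\|+\|g(N,Q)_x\|\big).$$
This $\beta$ is integrable by \eqref{N'xx} and \eqref{tgx} with $p=2$, giving a bound $C(\|\Phi_0,\Psi_0\|^2+\varepsilon)$. One subtlety needs care: in the product form we should pair $\|\Phi\|$ with $\|g_x\|_{L^2}$, because \eqref{tgx} with $p=2$ gives $C\varepsilon^{1/2}$. Its square is $O(\varepsilon)$, which is consistent with the target bound.

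\emph{Step 2: estimate for $\|\Psi_x\|$.} Substituting $\Psi_t=\Phi_x$ into the first equation of \eqref{rewrite'} gives $\Psi_{xt}$ in the form \eqref{psitx} plus $-g(N,Q)_x$. We multiply by $\Psi_x$ and use the same identity for $\Phi_t\Psi_x$. This produces the dissipation $\int\!\!\int N\Psi_x^2$, which is bounded below by $\min(n_\pm,\bar n)>0$ times $\int\!\!\int\Psi_x^2$, because $N$ lies between $n_+$ and $n_-$. The terms \eqref{J2}--\eqref{J4} are handled verbatim using \eqref{Ux'} and \eqref{Ul'}.

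In \eqref{J2} we will need $|Q|$ to be dominated by $N$ so that $\tfrac12\int|Q|\Psi_x^2$ can be absorbed. I expect this to be the main obstacle, since it was not addressed explicitly even in Section \ref{Sect.3}. If it fails, the fallback is to use Young's inequality with a small weight on the $\Psi_x$ factor. The resulting $\int|Q|\Phi_x^2$ term then carries a large coefficient, but it is controlled by Step 1. The term with $Q_x$ is absorbed using $\|Q_x\|_{L^\infty}\le C\varepsilon$.

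The new term $\int\!\!\int\Psi_x g_x$ is absorbed by Lemma \ref{Gronwall} together with \eqref{tgx}, exactly as $\int\!\!\int\Psi_xN_{xx}$ was.

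\emph{Step 3: estimate for $\|\Phi_x\|$ and conclusion.} We multiply the first equation of \eqref{rewrite'} by $-\Phi_{xx}$, as in \eqref{phixx}. The only extra term is bounded by
$$\tfrac14\int\!\!\int\Phi_{xx}^2 + C\int_0^t\|g_x\|^2\,d\tau,$$
and by \eqref{gx} with $p=2$,
$$\int_0^\infty\|g_x\|^2\,d\tau\le C\varepsilon^3\int_0^\infty(1+(\varepsilon t)^2)^{-2\theta/3}\,d\tau\le C\varepsilon^2.$$
Summing Steps 1--3 and taking $\chi_1$ small gives \eqref{global estimates}.
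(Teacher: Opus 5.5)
Your proposal is correct and follows the paper's proof essentially step for step. The three steps are the same: the $L^2$ estimate with $\Psi_x$ eliminated through the first equation and closed by Lemma \ref{Gronwall} with $\beta=C(\|N_{xx}\|+\|g(N,Q)_x\|)$, then the $\|\Psi_x\|$ and $\|\Phi_x\|$ estimates, where the new source is controlled by \eqref{tgx} and \eqref{gx}. Your fallback for the $Q\Phi_x\Psi_x$ term (small weight on $\Psi_x$, large coefficient on $\int_0^t\|\Phi_x\|^2d\tau$, which Step 1 already bounds) is the right way to close that step, because the paper's \eqref{J2} tacitly assumes $\frac12|Q|\Psi_x^2$ can be absorbed by $N\Psi_x^2$, and the hypotheses do not guarantee this.
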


To prove Proposition \ref{'A priori estimates'}, we first derive the $L^2$ estimate.
\begin{lemma}\label{L2 estimate'}
Let the assumptions of Proposition \ref{'A priori estimates'} hold.
If $\chi_1\ll1$, then there exists a constant $C>0$ such that
\begin{equation}\label{L2'}
\begin{aligned}
&\|\Phi(\cdot,t)\|^2
+\|\Psi(\cdot,t)\|^2
+\int_0^t\int_{\mathbb{R}}|Q_{x}|\phi^2dxd\tau
+\int_0^t\int_{\mathbb{R}}|N_{t}|\psi^2dxd\tau
+\int_0^t\|\Phi_x(\cdot,\tau)\|^2d\tau\\
&\leq C(\|\Phi_0\|^2
+\|\Psi_0\|^2
+\varepsilon) \ \text{ for }t\in[0,T].
\end{aligned}
\end{equation}
\end{lemma}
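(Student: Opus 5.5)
We follow the proof of Lemma \ref{L2 estimate} step by step, now with the composite approximation $(N,Q)$ and the extra source term $g(N,Q)_x$ in \eqref{rewrite'}.

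\textbf{Basic energy identity.} Multiply the first equation of \eqref{rewrite'} by $\Phi$ and the second by $N\Psi$, add, and integrate over $\mathbb{R}\times[0,t]$. Using $Q_t=N_x$ from \eqref{N'Q'}, we obtain the analogue of \eqref{phi2+psi2}:
\begin{equation*}
\begin{aligned}
&\left.\int_{\mathbb{R}}\Big(\frac{\Phi^2}{2}+\frac{N\Psi^2}{2}\Big)dx\right|_0^t
-\frac12\int_0^t\int_{\mathbb{R}}Q_x\Phi^2
-\frac12\int_0^t\int_{\mathbb{R}}N_t\Psi^2
+\int_0^t\|\Phi_x\|^2\\
&=\int_0^t\int_{\mathbb{R}}\Phi N_{xx}
+\int_0^t\int_{\mathbb{R}}\Phi\, g(N,Q)_x
+\int_0^t\int_{\mathbb{R}}\frac{\Phi^2}{2}\Psi_x .
\end{aligned}
\end{equation*}
By Lemma \ref{properties of N',Q'}-(1), both $-Q_x$ and $-N_t$ are positive. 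This supplies the good terms $|Q_x|\Phi^2$ and $|N_t|\Psi^2$. Since $N$ is decreasing with limit $n_+>0$, the weight satisfies $N\ge n_+$, so the $\Psi^2$ term is equivalent to $\|\Psi\|^2$.

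\textbf{The cubic term.} We handle $\int\frac{\Phi^2}{2}\Psi_x$ exactly as in \eqref{psix}--\eqref{phi2 psix}. From the first equation of \eqref{rewrite'} we solve
\[
\Psi_x=(N+\Phi)^{-1}\big[\Phi_t-(Q+\Psi)\Phi_x-N_x\Psi-\Phi Q_x-\Phi_{xx}-N_{xx}-g(N,Q)_x\big],
\]
and then estimate each resulting piece as in \eqref{I1}--\eqref{I6}. These estimates used only two facts: the bound $|N_t|\le C|Q_x|$ together with the comparisons of $N_x$, $N_t$, $Q_t$ against $Q_x$, and the lower bound $N+\Phi\ge n_+/2$. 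Lemma \ref{properties of N',Q'}-(4) and the smallness of $E(t)$ provide both in the composite setting.

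The only new pieces come from $g(N,Q)_x$. One is the term $\int(N+\Phi)^{-1}\Phi^2 g_x$. The other is produced when $\Phi_t$ is substituted again in the analogue of \eqref{I1.2.2}, giving $\int(N+\Phi)^{-2}g_x\Phi^3$. Both are bounded by $CE(t)\int_0^t\int|\Phi||g(N,Q)_x|$.

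\textbf{Closing with Gronwall.} For $E(t)$ small we absorb the good terms and arrive at
\[
\|\Phi\|^2+\|\Psi\|^2+\int_0^t\!\!\int\big(|Q_x|\Phi^2+|N_t|\Psi^2\big)+\int_0^t\|\Phi_x\|^2
\le C\big(\|\Phi_0\|^2+\|\Psi_0\|^2\big)+C\int_0^t\|\Phi\|\big(\|N_{xx}\|+\|g(N,Q)_x\|\big)d\tau .
\]
We then apply Lemma \ref{Gronwall} with $\beta(t)=C(\|N_{xx}\|+\|g(N,Q)_x\|)$. By \eqref{N'xx} and \eqref{tgx} with $p=2$, $\int_0^\infty\beta\le C\varepsilon^{1/2}$, so $(\int\beta)^2\le C\varepsilon$, which gives \eqref{L2'}.

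\textbf{Main obstacle.} The delicate step is that the interaction term $g(N,Q)_x$ is \emph{not} dominated pointwise by $|Q_x|$. It therefore cannot be absorbed into the good terms and has to go into the Gronwall source. The argument rests on its time-integrability \eqref{tgx}, which comes from the separation of the two wave fans, $\lambda_1(\bar n,\bar q)<0<\lambda_2(\bar n,\bar q)$, as used in the proof of Lemma \ref{properties of N',Q'}-(5).

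A secondary check is that Lemma \ref{properties of N',Q'}-(4) genuinely holds for the superposition. Since $N_x=N_{1x}+N_{2x}$ and each component has a fixed sign by Lemma \ref{properties of N,Q}, adding the single-wave inequalities termwise gives $|N_t|\le C|Q_x|$ with no cancellation.
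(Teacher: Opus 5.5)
Your proposal is correct and follows the paper's proof essentially step for step. It uses the same multipliers $\Phi$ and $N\Psi$ and the same substitution of $\Psi_x$ from the $\Phi$-equation. It bounds the two extra $g(N,Q)_x$ terms by $CE(t)\int_0^t\!\int_{\mathbb{R}}|\Phi||g(N,Q)_x|$, and it closes with Lemma \ref{Gronwall} for $\beta=C(\|N_{xx}\|+\|g(N,Q)_x\|)$, using \eqref{N'xx} and \eqref{tgx}.

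There is one slip, inherited from Lemma \ref{properties of N',Q'}-(1). For the $2$-wave, $N_{2x}=-\lambda_2Q_{2x}>0$ since $\lambda_2>0>Q_{2x}$, so $N$ is not monotone and $N\ge n_+$ is false in general. The correct bound is $N=N_1+N_2-\bar n\ge\bar n>0$. Positivity of $\bar n$ is equivalent to $\lambda_1(\bar n,\bar q)<0<\lambda_2(\bar n,\bar q)$, which you already use for \eqref{tgx}. Your no-cancellation check should likewise be run on $Q_x=Q_{1x}+Q_{2x}$, where both terms are negative, rather than on $N_x$. With that change, the facts the argument actually needs — $Q_x<0$, $N_t<0$ and $|N_x|+|N_t|\le C|Q_x|$ — hold as you claim.
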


\begin{proof}
We multiply the first equation of \eqref{rewrite'} by $\Phi$ and the
second one by $\Psi N$, sum them up and integrate it over $\mathbb{R}\times [0,t]$ to have
\begin{equation}\label{Phi2+Psi2}
\begin{aligned}
&\left.\left(\int_{\mathbb{R}}\frac{\Phi^2}{2}
+\frac{\Psi^2}{2} Ndx\right)\right|_0^t
-\frac{1}{2}\int_0^t\int_{\mathbb{R}}Q_{x}\Phi^2dxd\tau
-\frac{1}{2}\int_0^t\int_{\mathbb{R}}N_{t}\Psi^2dxd\tau
+\int_0^t\int_{\mathbb{R}}\Phi_x^2dxd\tau\\
&=\int_0^t\int_{\mathbb{R}}\Phi N_{xx}dxd\tau
+\int_0^t\int_{\mathbb{R}}\Phi g(N,Q)_xdxd\tau
+\int_0^t\int_{\mathbb{R}}\frac{\Phi^2}{2} \Psi_xdxd\tau.
\end{aligned}
\end{equation}
To estimate the last term of \eqref{Phi2+Psi2}, we rewrite the first equation of \eqref{rewrite'} as
\begin{equation}\label{Psix}
\begin{aligned}
\Psi_x=(N+\Phi)^{-1}[\Phi_t-
(Q+\Psi)\Phi_{x}-N_x\Psi-\Phi Q_x
-\Phi_{xx}-N_{xx}-g(N,Q)_x].
\end{aligned}
\end{equation}
It then follows that
\begin{equation}\label{Psix'}
\begin{aligned}
\int_0^t\int_{\mathbb{R}}\frac{\Phi^2}{2} \Psi_xdxd\tau
&=\int_0^t\int_{\mathbb{R}}(N+\Phi)^{-1}
\frac{\Phi^2}{2}\left[\Phi_t-
Q\Phi_{x}-\Psi\Phi_{x}-(N_x\Psi+\Phi Q_x)
-\Phi_{xx}\right.\\
&~~~~\left.-N_{xx}-g(N,Q)_x\right]dxd\tau.
\end{aligned}
\end{equation}

We next estimate each term on the RHS of \eqref{Psix'}. Integrating by parts gives
\begin{equation}\label{4.9}
\begin{aligned}
\int_0^t\int_{\mathbb{R}}
\frac{\Phi^2}{2}(N+\Phi)^{-1}\Phi_t
=\int_{\mathbb{R}}
\left.(N+\Phi)^{-1}\frac{\Phi^3}{6}\right|
_{\tau=0}^{\tau=t}dx
-\int_0^t\int_{\mathbb{R}}
\partial_t\left((N+\Phi)^{-1}\right)
\frac{\Phi^3}{6}.
\end{aligned}
\end{equation}
As in \eqref{3.6}, by \eqref{assumption'} and the Sobolev embedding theorem, we have
\begin{equation}\label{4.10}
\sup\limits_{0\leq \tau\leq t}(\|\Phi(\cdot,\tau)\|_{L^\infty}+\|\Psi(\cdot,\tau)\|_{L^\infty})\leq CE(t).
\end{equation}
By the fact that $\frac{\partial N}{\partial x}<0$, we get
\[N(x,t)+\Phi(x,t)>\frac{n_+}{2}, \text{ if } E(t)\ll1.\]
It then follows
\begin{equation}\label{J1.1}
\begin{aligned}
\left|\int_{\mathbb{R}}
\left.(N+\Phi)^{-1}\frac{\Phi^3}{6}\right|
_{\tau=0}^{\tau=t}dx\right|
\leq CE(t)\|\Phi(t)\|^2+CE(0)\|\Phi_0\|^2.
\end{aligned}
\end{equation}
A direct calculation yields
\begin{equation}\label{J1.2}
\begin{aligned}
\left|\int_0^t\int_{\mathbb{R}}
\partial_t\left((N+\Phi)^{-1}\right)
\frac{\Phi^3}{6}\right|
\leq C
\int_0^t\int_{\mathbb{R}}|N_t||\Phi|^3
+\left|\int_0^t\int_{\mathbb{R}}
(N+\Phi)^{-2}\Phi_t\frac{\Phi^3}{6}\right|.
\end{aligned}
\end{equation}
Owing to Lemma \ref{properties of N',Q'}-(4), it holds
that
\begin{equation}\label{J1.2.1}
\begin{aligned}
\int_0^t\int_{\mathbb{R}}|N_t||\Phi|^3dxd\tau
\leq CE(t)\int_0^t\int_{\mathbb{R}}
|Q_x|\Phi^2dxd\tau.
\end{aligned}
\end{equation}
Using the first equation of \eqref{rewrite'}, we get
\begin{equation}\label{J1.2.2}
\begin{aligned}
\left|\int_0^t\int_{\mathbb{R}}
(N+\Phi)^{-2}\Phi_t\frac{\Phi^3}{6}\right|
\leq&\left|\int_0^t\int_{\mathbb{R}}
(N+\Phi)^{-2}(\Phi\Psi+\Phi Q+N\Psi+\Phi_x)_{x}
\frac{\Phi^3}{6}\right|\\
&+\left|\int_0^t\int_{\mathbb{R}}
(N+\Phi)^{-2}(N_{xx}+g(N,Q)_x)
\frac{\phi^3}{6}\right|.
\end{aligned}
\end{equation}
By \eqref{3.6}, we derive
\begin{equation}\label{J1.2.2.3}
\begin{aligned}
\left|\int_0^t\int_{\mathbb{R}}
(N+\Phi)^{-2}(N_{xx}+g(N,Q)_x)
\frac{\phi^3}{6}\right|
\leq CE(t)\int_0^t\int_{\mathbb{R}}
|\Phi (N_{xx}+g(N,Q)_x)|dxd\tau.
\end{aligned}
\end{equation}
Integrating by parts gives rise to
\begin{equation}\label{J1.2.2.1'}
\begin{aligned}
&\left|\int_0^t\int_{\mathbb{R}}
(N+\Phi)^{-2}(\Phi\Psi+\Phi Q+N\Psi+\Phi_x)_{x}
\frac{\Phi^3}{6}\right|\\
&=\left|\int_0^t\int_{\mathbb{R}}
(N+\Phi)^{-3}(N_x+\Phi_x)
(\Phi\Psi+\Phi Q+N\Psi+\Phi_x)
\frac{\Phi^3}{3}\right.\\
&~~~\left.+\int_0^t\int_{\mathbb{R}}
(N+\Phi)^{-2}(\Phi\Psi+\Phi Q+N\Psi+\Phi_x)
\frac{\Phi^2}{2}\Phi_x\right|\\
&\leq C\int_0^t\int_{\mathbb{R}}
\left|N_x(\Phi\Psi+\Phi Q+N\Psi+\Phi)
\Phi^3\right|
+C\int_0^t\int_{\mathbb{R}}
\left|\Phi_x\Phi^4\Psi+\Phi_x\Phi^4Q+
\Phi_xN\Psi\Phi^3\right|\\
&~~~+C\int_0^t\int_{\mathbb{R}}
\left|\Phi_x\Phi^3\Psi+\Phi_x\Phi^3Q+
\Phi_xN\Psi\Phi^2+\Phi_x^2\Phi^2+\Phi_x^2\Phi^3
\right|.
\end{aligned}
\end{equation}
By Lemma \ref{properties of N',Q'}-(4), \eqref{3.6} and Young's inequality, we have
\begin{equation}\label{J1.2.2.1.1}
\begin{aligned}
\int_0^t\int_{\mathbb{R}}
\left|N_x(\Phi\Psi+\Phi Q+N\Psi+\Phi_x)
\Phi^3\right|
\leq CE(t)
\int_0^t\int_{\mathbb{R}}(|Q_x|\Phi^2
+\Phi_x^2).
\end{aligned}
\end{equation}
As in \eqref{I1.2.2.1.2}-\eqref{I1.2.2.1.3}, by the Sobolev inequality we get
\begin{equation}\label{J1.2.2.1.2}
\begin{aligned}
\int_0^t\int_{\mathbb{R}}
\left|\Phi_x\Phi^4\Psi+\Phi_x\Phi^4Q+
\Phi_xN\Psi\Phi^3\right|
\leq CE(t)
\int_0^t\int \Phi_x^2,
\end{aligned}
\end{equation}
and
\begin{equation}\label{J1.2.2.1.3}
\begin{aligned}
\int_0^t\int_{\mathbb{R}}
\left|\Phi_x\Phi^3\Psi+\Phi_x\Phi^3Q+
\Phi_xN\Psi\Phi^2+\Phi_x^2\Phi^2+\Phi_x^2\Phi^3
\right|
\leq CE(t)
\int_0^t\int \Phi_x^2.
\end{aligned}
\end{equation}
Substituting \eqref{J1.2.2.1.1}-\eqref{J1.2.2.1.3} into \eqref{J1.2.2.1'} and combining the result with \eqref{J1.2.2.3}, we obtain
\begin{equation*}
\begin{aligned}
\left|\int_0^t\int_{\mathbb{R}}
(N+\Phi)^{-2}\Phi_t\frac{\Phi^3}{6}\right|
\leq CE(t)
\int_0^t\int(|Q_x|\Phi^2
+\Phi_x^2+|\Phi (N_{xx}+g(N,Q)_x)|).
\end{aligned}
\end{equation*}
This together with \eqref{4.9}-\eqref{J1.2.1} leads to
\begin{equation}\label{J1}
\begin{aligned}
\int_0^t\int_{\mathbb{R}}
\frac{\Phi^2}{2}(N+\Phi)^{-1}\Phi_t
\leq &CE(t)\|\Phi(t)\|^2+CE(0)\|\Phi_0\|^2
+CE(t)
\int_0^t\int(|Q_x|\Phi^2
+\Phi_x^2)\\
&+CE(t)\int_0^t\int|\Phi (N_{xx}+g(N,Q)_x)|.
\end{aligned}
\end{equation}
Integrating by parts, by Lemma \ref{properties of N',Q'}-(4), we find
\begin{equation}\label{J2'}
\begin{aligned}
-\int_0^t\int_{\mathbb{R}}(N+\Phi)^{-1}
\frac{\Phi^2}{2}
Q\Phi_{x}dxd\tau
&=\int_0^t\int_{\mathbb{R}}
\partial_x
\left((N+\Phi)^{-1}Q\right)
\frac{\Phi^3}{6}dxd\tau\\
&\leq C
\int_0^t\int_{\mathbb{R}}|Q_x|
|\Phi|^3dxd\tau
+C\int_0^t
\|\Phi\|^2_{L^\infty}\|\Phi\|\|\Phi_x\|d\tau\\
&\leq C
\int_0^t\int_{\mathbb{R}}|Q_x|
|\Phi|^3dxd\tau
+C\int_0^t
\|\Phi\|^2\|\Phi_x\|^2d\tau\\
&\leq CE(t)
\int_0^t\int(|Q_x|\Phi^2
+\Phi_x^2),
\end{aligned}
\end{equation}
where we have used Sobolev inequality in the second inequality. Similarly, by Lemma \ref{properties of N',Q'}-(4), \eqref{Ux'} and \eqref{3.6}, we arrive at
\begin{equation}\label{J3-J5}
-\int_0^t\int_{\mathbb{R}}(N+\Phi)^{-1}
\frac{\Phi^2}{2}
\Psi\Phi_{x}dxd\tau
\leq CE(t)
\int_0^t\int\Phi_x^2,\end{equation}
\begin{equation}
-\int_0^t\int_{\mathbb{R}}(N+\Phi)^{-1}
\frac{\Phi^2}{2}
(N_x\Psi+\Phi Q_x)dxd\tau
\leq CE(t)
\int_0^t\int_{\mathbb{R}}|Q_x|\Phi^2dxd\tau,\end{equation}
and
\begin{equation}
\begin{split}
-\int_0^t\int_{\mathbb{R}}(N+\Phi)^{-1}
\frac{\Phi^2}{2}\Phi_{xx}dxd\tau
&=\int_0^t\int_{\mathbb{R}}
\partial_x
\left((N+\Phi)^{-1}\frac{\Phi^2}{2}\right)
\Phi_xdxd\tau \\&
\leq CE(t)
\int_0^t\int_{\mathbb{R}}(|Q_x|\Phi^2+\Phi_x^2).\end{split}
\end{equation}
Noting $N(x,t)+\Phi(x,t)>\frac{n_+}{2}$, by \eqref{3.6}, we have
\begin{equation}\label{J6-J7}
\begin{aligned}
-\int_0^t\int_{\mathbb{R}}(N+\Phi)^{-1}
\frac{\Phi^2}{2}(N_{xx}+g(N,Q)_x)dxd\tau
&\leq C\int_0^t\int_{\mathbb{R}}
|\Phi|^2|(N_{xx}+g(N,Q)_x)| dxd\tau\\
&\leq CE(t)\int_0^t\int_{\mathbb{R}}|\Phi (N_{xx}+g(N,Q)_x)|dxd\tau.
\end{aligned}
\end{equation}
Substituting \eqref{J1}-\eqref{J6-J7} into \eqref{Psix}, one has
\begin{equation*}
\begin{aligned}
\left|\int_0^t\int_{\mathbb{R}}\frac{\Phi^2}{2} \Psi_xdxd\tau\right|
\leq &CE(t)\|\Phi(t)\|^2+CE(0)\|\Phi_0\|^2
+CE(t)
\int_0^t\int_{\mathbb{R}}(|Q_x|\Phi^2+\Phi_x^2)\\
&+CE(t)\int_0^t\int_{\mathbb{R}}|\Phi (N_{xx}+g(N,Q)_x)|dxd\tau.
\end{aligned}
\end{equation*}
It then follows from \eqref{Phi2+Psi2} that
\begin{equation}\label{4.1'}
\begin{aligned}
&\|\Phi(t)\|^2
+\|\Psi(t)\|^2
+\int_0^t\int_{\mathbb{R}}|Q_{x}|\Phi^2
+\int_0^t\int_{\mathbb{R}}|N_{t}|\psi^2
+\int_0^t\|\Phi_x(\tau)\|^2
\\ &\leq \|\Phi_0\|^2
+\|\Psi_0\|^2
+C\int_0^t\int_{\mathbb{R}}|\Phi N_{xx}|dxd\tau
+C\int_0^t\int_{\mathbb{R}}|\Phi g(N,Q)_x|dxd\tau\\&
\leq \|\Phi_0\|^2
+\|\psi_0\|^2
+C\int_0^t
\|\Phi\|(\|N_{xx}\|+\|g(N,Q)_x\|)d\tau,
\end{aligned}
\end{equation}
provided that $E(t)$ is suitably small.
Taking in Lemma \ref{Gronwall}
\begin{equation*}
\begin{aligned}
F(t)\triangleq \|\Phi_0\|^2
+\|\psi_0\|^2+C\int_0^t
\|\Phi\|(\|N_{xx}\|+\|g(N,Q)_x\|)d\tau
,\ \beta(t)\triangleq \|N_{xx}\|+\|g(N,Q)_x\|,
\end{aligned}
\end{equation*}
by \eqref{N'xx} and \eqref{tgx}, we obtain the estimate \eqref{L2'}.
The proof is complete.
\end{proof}

We next derive the estimate for $\|\Psi_x\|$ .
\begin{lemma}\label{H1Psix}
Let the assumptions of Proposition \ref{'A priori estimates'} hold.
If $\chi_1\ll1$, there exists a constant $C>0$ such that
\begin{equation}\label{H1.1'}
\begin{aligned}
\|\Psi_x(\cdot,t)\|^2
+\int_0^t\|
\Psi_x(\cdot,\tau)\|^2
\leq &C(\|\Phi_0\|^2
+\|\Psi_0\|^2
+\|\Psi_{0x}\|^2
+\varepsilon) \ \text{ for } t\in[0,T].
\end{aligned}
\end{equation}

\end{lemma}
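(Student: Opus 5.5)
The plan is to mirror the proof of Lemma \ref{H1psix}, with the extra source term $g(N,Q)_x$ treated alongside $N_{xx}$. First, substituting $\Psi_t=\Phi_x$ into the first equation of \eqref{rewrite'} gives
\begin{equation*}
\Psi_{xt}=\Phi_t-(\Phi\Psi+\Phi Q+N\Psi)_x-N_{xx}-g(N,Q)_x .
\end{equation*}
Next, multiply by $\Psi_x$ and integrate over $\mathbb{R}\times[0,t]$. For the $\Phi_t\Psi_x$ term, use the identity $\Phi_t\Psi_x=(\Phi\Psi_x)_t-(\Phi\Phi_x)_x+\Phi_x^2$. The term $-\int\Psi_x(N\Psi)_x=-\int N\Psi_x^2-\int N_x\Psi\Psi_x$ produces the good term $\int N\Psi_x^2\ge \frac{n_+}{2}\int\Psi_x^2$, since $N$ is decreasing. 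Writing $\int N_x\Psi\Psi_x=-\frac12\int N_{xx}\Psi^2$, this yields
\begin{equation*}
\Big(\int\frac{\Psi_x^2}{2}-\Phi\Psi_x\Big)\Big|_0^t+\int_0^t\!\!\int N\Psi_x^2
=\int_0^t\!\!\int\Big(\Phi_x^2-\Psi_x(\Phi Q)_x-\Psi_x(\Phi\Psi)_x+\tfrac{\Psi^2}{2}N_{xx}-\Psi_xN_{xx}-\Psi_x g(N,Q)_x\Big).
\end{equation*}

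The right-hand side is then estimated term by term, as in \eqref{J2}--\eqref{J4}.

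\begin{itemize}
\item The boundary term $\int\Phi\Psi_x$ is absorbed by Young's inequality into $\frac14\|\Psi_x(t)\|^2+C\|\Phi(t)\|^2$.
\item The term $\int\Phi_x^2$ is controlled by Lemma \ref{L2 estimate'}.
\item For $\Psi_x(\Phi Q)_x$, split as $\Psi_x\Phi_xQ+\Psi_x\Phi Q_x$. The first piece is bounded by $\eta\int\Psi_x^2+C_\eta\int\Phi_x^2$, with $\eta$ small compared to $n_+/2$. The second uses $|Q_x|\le C\varepsilon$ from \eqref{Ux'} together with the weighted term $\int|Q_x|\Phi^2$ from Lemma \ref{L2 estimate'}.
\item The cubic term $\Psi_x(\Phi\Psi)_x$ is $O(E)\int(\Psi_x^2+\Phi_x^2)$ by \eqref{4.10}.
\item The term $\int\frac{\Psi^2}{2}N_{xx}$ is treated with the Sobolev inequality, $\|\Psi\|_{L^\infty}^2\le\|\Psi\|\|\Psi_x\|$, and with $\int_0^t\|N_{xx}\|_{L^1}^2\le C\varepsilon$. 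The latter follows from \eqref{Ul'} exactly as in \eqref{NkxxL1}, splitting at $t_0=\varepsilon^{-1}$.
\end{itemize}

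The only genuinely new terms are the linear source contributions $-\int\Psi_x(N_{xx}+g(N,Q)_x)$. I would not Gronwall-type these. Instead, bound them by $\eta\int_0^t\|\Psi_x\|^2+C_\eta\int_0^t(\|N_{xx}\|^2+\|g(N,Q)_x\|^2)$. The dissipation $\int N\Psi_x^2$ makes this absorption legitimate. By \eqref{Ul'} with $p=2$, $\int_0^\infty\|N_{xx}\|^2\le C\varepsilon^2$. By \eqref{gx} with $p=2$, $\int_0^\infty\|g_x\|^2\le C\varepsilon^3\int_0^\infty(1+(\varepsilon t)^2)^{-2\theta/3}\le C\varepsilon^2$, because $\theta>3/2$ makes the integrand integrable. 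Alternatively, one can follow the paper's route via Lemma \ref{Gronwall} with $\beta=\|N_{xx}\|+\|g_x\|$, using \eqref{N'xx} and \eqref{tgx}.

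Collecting the estimates, with $\eta$ and $E(t)\le\chi_1$ small enough to absorb all $\Psi_x$ terms into the left side, and invoking Lemma \ref{L2 estimate'}, gives \eqref{H1.1'}. The main point of care is the absorption of $\int\Psi_x\Phi_xQ$: the coefficient of $\int\Psi_x^2$ must stay below $n_+/2$, so Young's inequality must be applied with a small weight, and the price is paid in $\int\Phi_x^2$, which is already controlled. Everything else is routine given the decay estimates of Lemma \ref{properties of N',Q'}.
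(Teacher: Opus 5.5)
Your proof is correct and uses the paper's skeleton: the same multiplier $\Psi_x$, the same identity \eqref{Psix2}, and the same term-by-term bounds as in Lemma \ref{H1psix}. It differs from the paper in how it closes.

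\textbf{Linear sources.} The paper keeps $C\int_0^t\|\Psi_x\|(\|N_{xx}\|+\|g(N,Q)_x\|)d\tau$ on the right of \eqref{Psix2'} and applies the Gronwall-type Lemma \ref{Gronwall}. That requires $\int_0^\infty(\|N_{xx}\|+\|g(N,Q)_x\|)d\tau<\infty$, supplied by \eqref{N'xx} and \eqref{tgx}, and gives an $O(\varepsilon)$ contribution. You instead absorb these sources into the damping $\int_0^t\!\int N\Psi_x^2$ by Young's inequality. This needs only $\int_0^\infty(\|N_{xx}\|^2+\|g(N,Q)_x\|^2)d\tau<\infty$ and gives $O(\varepsilon^2)$. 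The shortcut works precisely because the $\Psi_x$ identity carries damping. The $L^2$ estimate of Lemma \ref{L2 estimate'} has no such damping, so Gronwall is unavoidable there, and the paper simply reuses that template.

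\textbf{The term $\Psi_x\Phi_xQ$.} Your small-weight Young inequality is more careful than \eqref{J2}. There, $\frac12\int|Q|\Psi_x^2$ can be absorbed only if $|Q|<2N$, which is not guaranteed for large-amplitude states.

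\textbf{A caveat inherited from the paper.} Lemma \ref{properties of N',Q'}-(1) claims $N$ is decreasing, but this fails for the composite wave. Along the $2$-wave, $N_{2x}=-\lambda_2Q_{2x}>0$, so $N$ is not monotone. The correct lower bound is $N=N_1+N_2-\bar n\ge\bar n$. Moreover $\bar n=-\lambda_1(\bar n,\bar q)\lambda_2(\bar n,\bar q)>0$, using $\lambda_1(\bar n,\bar q)<0<\lambda_2(\bar n,\bar q)$ as the paper does. So $\bar n$, not $n_+/2$, should govern your choice of $\eta$ and $\varepsilon$. With that change, the argument goes through unchanged.
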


\begin{proof}
Substituting $\Psi_t=\Phi_x$ into the first equation of \eqref{rewrite'} yields
\begin{equation}\label{Psitx}
\begin{aligned}
\Psi_{tx}=\Phi_t -(\Phi\Psi+\Phi Q+N\Psi)_x-N_{xx}-g(N,Q)_x.
\end{aligned}
\end{equation}
Multiplying \eqref{Psitx} by $\Psi_x$, and integrating the result, noting \begin{equation*}
\begin{aligned}
\Phi_t\Psi_x&=(\Phi\Psi_x)_t-\Phi\Psi_{xt}
=(\Phi\Psi_x)_t-\Phi\Phi_{xx}
=(\Phi\Psi_x)_t-(\Phi\Phi_x)_x+\Phi_x^2,
\end{aligned}
\end{equation*}
we have by integration by parts,
\begin{equation}\label{Psix2}
\begin{aligned}
&\left.\left(\int_{\mathbb{R}}\frac{\Psi_x^2}{2}
- \Phi\Psi_x dx\right)\right|_0^t
+\int_0^t\int_{\mathbb{R}}        N\Psi_x^2dxd\tau\\
=&\int_0^t\int_{\mathbb{R}}\Phi_x^2dxd\tau
-\int_0^t\int_{\mathbb{R}}\Psi_{x}(\Phi Q)_{x} dxd\tau
-\int_0^t\int_{\mathbb{R}}\psi_{x}(\Phi\Psi)_{x} dxd\tau\\
&+\int_0^t\int_{\mathbb{R}}\frac{\Psi^2}{2}N_{xx} dxd\tau
-\int_0^t\int_{\mathbb{R}}\Psi_{x}N_{xx} dxd\tau
-\int_0^t\int_{\mathbb{R}}\Psi_{x}g(N,Q)_x dxd\tau.
\end{aligned}
\end{equation}
As in the proof of Lemma \ref{H1psix}, we obtain
\begin{equation}\label{Psix2'}
\begin{aligned}
&\|\Psi_x(t)\|^2
+\int_0^t\|
\Psi_x(\tau)\|^2\\
&\leq C(\|\phi_0\|^2
+\|\Psi_0\|^2+\|\Psi_{0x}\|^2)
+C E\varepsilon
+\int_0^t\int_{\mathbb{R}}
|\Psi_x(N_{xx}+g(N,Q)_x )| dxd\tau\\
&\leq C(\|\phi_0\|^2
+\|\Psi_0\|^2+\|\Psi_{0x}\|^2)
+C \varepsilon
+C\int_0^t
\|\Psi_x\|(\|N_{xx}\|+\|g(N,Q)_x\|)dxd\tau,
\end{aligned}
\end{equation}
provided that $E(t)\ll1$ is suitably small.
By Lemma \ref{Gronwall}, \eqref{N'xx} and \eqref{tgx}, we obtain the estimate \eqref{H1.1'}.
\end{proof}

Finally, we establish the estimate for $\|\Phi_x\|$ .
\begin{lemma}\label{H1Phix}
Let the assumptions of Proposition \ref{'A priori estimates'} hold.
If $\chi_1\ll1$, then there exists a constant $C>0$ such that
\begin{equation}\label{H'1.2}
\begin{aligned}
\|\Phi_x(\cdot,t)\|^2
+\int_0^t\|\Phi_{xx}(\cdot,\tau)\|^2
\leq &C(\|\Phi_0\|_1^2
+\|\Psi_0\|_1^2
+\varepsilon) \text{ for } t\in[0,T].
\end{aligned}
\end{equation}

\end{lemma}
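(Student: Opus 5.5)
We follow the proof of Lemma \ref{H1phix} for the single-wave case, with the extra source term $g(N,Q)_x$ handled through Lemma \ref{properties of N',Q'}-(5). We multiply the first equation of \eqref{rewrite'} by $-\Phi_{xx}$ and integrate over $\mathbb{R}\times[0,t]$. Integration by parts in the time-derivative term gives
\begin{equation*}
\left.\int_{\mathbb{R}}\frac{\Phi_x^2}{2}dx\right|_0^t+\int_0^t\int_{\mathbb{R}}\Phi_{xx}^2
=-\int_0^t\int_{\mathbb{R}}\Phi_{xx}\left[(\Phi\Psi+\Phi Q+N\Psi)_x+N_{xx}+g(N,Q)_x\right].
\end{equation*}
We then apply Young's inequality to the right-hand side, absorbing $\frac12\int_0^t\|\Phi_{xx}\|^2$ into the left-hand side. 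This leaves the following terms, each multiplied by a constant: $|Q||\Phi_x|^2$, $|Q_x|^2\Phi^2$, $|N_x|^2\Psi^2$, $|N||\Psi_x|^2$, $\Phi_x^2\Psi^2$, $\Phi^2\Psi_x^2$, $|N_{xx}|^2$ and $|g(N,Q)_x|^2$.

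Most of these terms are controlled by earlier results.
\begin{itemize}
\item Since $Q$ and $N$ are bounded, the terms $|Q|\Phi_x^2$ and $|N|\Psi_x^2$ are bounded by $\int_0^t(\|\Phi_x\|^2+\|\Psi_x\|^2)$. This is controlled by Lemmas \ref{L2 estimate'} and \ref{H1Psix}.
\item Since $|Q_x|\le C\varepsilon$ by \eqref{Ux'}, we have $|Q_x|^2\Phi^2\le C\varepsilon|Q_x|\Phi^2$. This term is absorbed by the weighted dissipation in \eqref{L2'}.
\item The term $|N_x|^2\Psi^2$ is treated exactly as in \eqref{K3}. The bound $\|\Psi\|_{L^\infty}^2\le\|\Psi\|\|\Psi_x\|$ together with \eqref{Ux'} gives $CE\int_0^t\|\Psi_x\|^2+CE\int_0^t\min\{\varepsilon^2,\tau^{-2}\}d\tau\le CE\int_0^t\|\Psi_x\|^2+CE\varepsilon$.
\item The cubic terms $\Phi_x^2\Psi^2$ and $\Phi^2\Psi_x^2$ are bounded by $CE^2\int_0^t(\|\Phi_x\|^2+\|\Psi_x\|^2)$ using \eqref{4.10}, as in \eqref{K5}--\eqref{K6}.
\item We have $\int_0^t\|N_{xx}\|^2\le C\varepsilon^2$ from \eqref{Ul'}, as in \eqref{K7}.
\end{itemize}

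The only genuinely new term is $\int_0^t\|g(N,Q)_x\|^2d\tau$. We use \eqref{gx} with $p=2$:
\begin{equation*}
\int_0^\infty\|g(N,Q)_x\|^2d\tau\le C\varepsilon^3\int_0^\infty(1+(\varepsilon\tau)^2)^{-2\theta/3}d\tau\le C\varepsilon^2 .
\end{equation*}
The integral is finite because $4\theta/3>2>1$ for $\theta>\frac32$, and the substitution $s=\varepsilon\tau$ contributes the factor $\varepsilon^{-1}$.

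Collecting all the terms and invoking Lemmas \ref{L2 estimate'} and \ref{H1Psix} to bound $\int_0^t(\|\Phi_x\|^2+\|\Psi_x\|^2+\||Q_x|^{1/2}\Phi\|^2)$ by $C(\|\Phi_0\|^2+\|\Psi_0\|_1^2+\varepsilon)$ yields \eqref{H'1.2}. The step needing care is checking that no term requires $\Psi_{xx}$, and none does, since every $\Psi$ term appears with at most one derivative. The remaining point to check is the time-integrability of $\|g(N,Q)_x\|^2$, which the computation above settles. We do not expect any substantive obstacle beyond these two checks.
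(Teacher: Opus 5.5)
Your proposal is correct and follows the paper's proof essentially step for step. You multiply the $\Phi$-equation by $-\Phi_{xx}$, absorb half of $\int_0^t\|\Phi_{xx}\|^2$ by Young's inequality, and treat the remaining terms exactly as in Lemma \ref{H1phix}; the new source term is bounded via \eqref{gx} with $p=2$, giving $\int_0^t\|g(N,Q)_x\|^2\,d\tau\le C\varepsilon^2$, and you close with Lemmas \ref{L2 estimate'} and \ref{H1Psix}.
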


\begin{proof}
Multiplying the first equation of \eqref{rewrite'} by $-\Phi_{xx}$ and integrating the result, we have
\begin{equation}\label{Phixx}
\begin{aligned}
&\left.\int_{\mathbb{R}}\frac{\Phi_x^2}{2}
dx\right|_0^t
+\int_0^t\int_{\mathbb{R}}        \Phi_{xx}^2dxd\tau\\
=&-\int_0^t\int_{\mathbb{R}}
\Phi_{xx}\left[(\Phi\Psi+\Phi Q+N\Psi)_{x}
+N_{xx}+g(N,Q)_x
\right]dxd\tau\\
\leq &\frac{1}{2}\int_0^t\int_{\mathbb{R}}
\Phi_{xx}^2 dxd\tau
+\frac{1}{2}\int_0^t\int_{\mathbb{R}}
|N_{xx}|^2dxd\tau
+C\int_0^t\int_{\mathbb{R}}
(|Q||\Phi_x|^2+|Q_{x}|^2|\Phi|^2+|N_{x}|^2|\Psi|^2
\\&\quad+|N||\Psi_x|^2
+|\Phi_x|^2|\Psi|^2+|\Phi|^2|\Psi_x|^2
)dxd\tau
+\frac{1}{2}\int_0^t\int_{\mathbb{R}}
|g(N,Q)_x|^2dxd\tau.
\end{aligned}
\end{equation}
As in the proof of Lemma \ref{H1phix}, we get
\begin{equation}\label{H}
\begin{aligned}
&\int_0^t\int_{\mathbb{R}}
|Q_{x}|^2|\Phi|^2dxd\tau
\leq C\int_{\mathbb{R}}
|Q_{x}||\Phi|^2dxd\tau,\\
&\int_0^t\int_{\mathbb{R}}
|N_{x}|^2|\Psi|^2dxd\tau
\leq CE \int_0^t
\|\Psi_x\|^2d\tau+CE\varepsilon,\\
&\int_0^t\int_{\mathbb{R}}
|\Phi_x|^2|\Psi|^2dxd\tau
\leq CE^2\int_0^t
\|\Phi_x\|^2d\tau,\\
&\int_0^t\int_{\mathbb{R}}
|\Phi|^2|\psi_x|^2dxd\tau
\leq CE^2\int_0^t
\|\Psi_x\|^2d\tau.
\end{aligned}
\end{equation}
By  \eqref{gx} and $\theta>\frac{3}{2}$, the last term of \eqref{Phixx} satisfies
\begin{equation}\label{gx2}
\begin{aligned}
\frac{1}{2}\int_0^t\int_{\mathbb{R}}
|g(N,Q)_x|^2dxd\tau
\leq C_{p\theta}\int_0^t
\varepsilon^3
(1+(\varepsilon \tau)^2)^{-2\theta/3}d\tau
\leq C_{p\theta}\varepsilon^2.
\end{aligned}
\end{equation}
Substituting \eqref{H} and \eqref{gx2} into \eqref{Phixx}, by
Lemmas \ref{L2 estimate'} and \ref{H1Psix}, we obtain the desired estimate \eqref{H'1.2} and  finish the proof of Lemma \ref{H1Phix}.
\end{proof}

\begin{proof}[Proof of Proposition \ref{'A priori estimates'}]
It is a direct consequence of Lemmas \ref{L2 estimate'}, \ref{H1Psix} and \ref{H1Phix}.
\end{proof}

\begin{proof}[Proof of Proposition \ref{Global existence}]
The \emph{a priori} estimate \eqref{global estimates} guarantees that $E(t)$ is
small for all $t>0$ if $E(0)$ is small enough. Thus, as in the proof of Proposition \ref{phi stability}, applying the
standard extension procedure, one can obtain the global well-posedness
of system  \eqref{rewrite'}-\eqref{Phi0,Psi0} in $X(0,\infty)$. By the estimate \eqref{global estimates} and the equation \eqref{rewrite'}, applying the same argument as that of Proposition \ref{phi stability}, we have
\begin{equation*}
\begin{aligned}
\int_{0}^{+\infty}\left[\int_{\mathbb{R}}
\left(\Phi_x^2+\Psi_x^2\right)dx+
\left|\frac{d}{dt}\int_{\mathbb{R}}
\left(\Phi_x^2+\Psi_x^2\right)dx\right|\right]d\tau
<+\infty.
\end{aligned}
\end{equation*}
Hence
\begin{equation*}
\lim\limits_{t\rightarrow \infty}
\left\|(\Phi_x,\Psi_x)(\cdot,t)\right\|^2=0.
\end{equation*}
It then follows from the Sobolev inequality  that
\begin{equation}\label{4.35}
\begin{split}
\sup\limits_{x\in \mathbb{R}}
\left|(\Phi,\Psi)(x,t)\right|^2&\leq C\max\{\left\|\Phi(\cdot,t)\right\|\left\|\Phi_x(\cdot,t)\right\|,\left\|\Psi(\cdot,t)\right\|\left\|\Psi_x(\cdot,t)\right\|\}\\&\leq C\max\{\left\|\Phi_x(\cdot,t)\right\|,\left\|\Psi_x(\cdot,t)\right\|\}\rightarrow0 \text{ as }t\rightarrow\infty.
\end{split}\end{equation}
This completes the proof of  Proposition \ref{Global existence}.
\end{proof}

We are ready to finish the proof of Theorem \ref{the1.2}.
\begin{proof}[Proof of Theorem \ref{the1.2}]
Since $(n-N,q-Q)=(\Phi,\Psi)$, if $(n_0-N_0,q_0-Q_0)(x)$ is small, then by Proposition \ref{Global existence}, the system \eqref{1.1}-\eqref{i d} has a unique global solution satisfying \eqref{1.9}. Furthermore, by \eqref{limN} and \eqref{4.35}, $(n,q)$ has the asymptotic behavior \eqref{stable}. We complete the proof of Theorem \ref{the1.2}.
\end{proof}

\section*{Acknowledgements}
This work is supported by the National Natural Science Foundation of China (No. 12371216).

\end{document}